\newtheorem{theorem}{Theorem}[section]
\newtheorem{lemma}[theorem]{Lemma}
\newtheorem{proposition}[theorem]{Proposition}
\newtheorem{remark}[theorem]{Remark}
\newtheorem{example}[theorem]{Example}
\newcommand{\tf}{\mathcal{F}}
\newcommand{\R}{\mathbb{R}}
\newcommand{\E}{\mathrm{E}}
\renewcommand{\P}{\mathrm{P}}
\newcommand{\ta}{\theta}
\title{LAN property for an ergodic diffusion with jumps}
\date{\today}
\author[Arturo Kohatsu-Higa, Eulalia Nualart and Ngoc Khue Tran]{Arturo Kohatsu-Higa, Eulalia Nualart and Ngoc Khue Tran}
\address{Arturo Kohatsu-Higa, Department of Mathematical Sciences - Ritsumeikan University and Japan Science and Technology Agency, 1-1-1 Nojihigashi, Kusatsu, Shiga, 525-8577, Japan}
\email{khts00@fc.ritsumei.ac.jp}
\address{Eulalia Nualart, Dept. Economics and Business, Universitat Pompeu Fabra and Barcelona Graduate School of Economics, Ram\'on Trias Fargas 25-27, 08005 Barcelona, Spain}
\email{eulalia@nualart.es}
\address{Ngoc Khue Tran, Department of Mathematical Sciences - Ritsumeikan University and Japan Science and Technology Agency, 1-1-1 Nojihigashi, Kusatsu, Shiga, 525-8577, Japan}
\email{tnkhueprob@gmail.com}
\thanks{The first author was supported by KAKENHI grant 24340022 and a JST-CREST project Mathematical structure of complex financial products and infinite dimensional analysis. Second  author acknowledges support from the European Union programme FP7-PEOPLE-2012-CIG under grant agreement 333938. Third author acknowledges support from JST-CREST project and the program Vietnam Overseas Scholarship Program (Project 322) and wishes to thank Universit\'e Paris 13 for the hospitality where a part of this work was done}
\subjclass[2010]{60H07; 60J75; 62F12; 62M05}
\keywords{asymptotic efficiency, jump diffusion process, local asymptotic normality property, Malliavin calculus}
\begin{document}
\maketitle
\begin{abstract} 
In this paper, we consider a multidimensional ergodic diffusion with jumps driven by a Brownian motion and a Poisson random measure associated with a compound Poisson process, whose drift coefficient depends on an unknown parameter. Considering the process discretely observed at high frequency, we derive the local asymptotic normality (LAN) property. 
\end{abstract}

\section{Introduction}

On a complete probability space $(\Omega, \tf, \P)$, we consider the $d$-dimensional process $X^{\theta}=(X_t^{\theta})_{t \geq 0}$ solution to the following stochastic differential equation (SDE) with jumps
\begin{equation}\label{c3eq1}\begin{split}
dX_t^{\theta}=b(\theta,X_t^{\theta})dt+\sigma(X_t^{\theta})dB_t+\int_{\mathbb{R}_0^d}c(X_{t-}^{\theta},z)\left(N(dt,dz)-\nu(dz)dt\right),
\end{split}
\end{equation}
where $X_0^{\theta}=x_0\in\R^d$, $\R_0^d:=\R^d\setminus\{0\}$, $B=(B_t)_{t \geq 0}$ is a $d$-dimensional Brownian motion, and $N(dt,dz)$ is a Poisson random measure in $(\R_{+}\times \R_0^d,\mathcal{B}(\R_{+}\times \R_0^d))$ independent of $B$, with intensity measure $\nu(dz)dt$ satisfying  $\lambda:=\int_{\R^d}\nu(dz)<\infty$. Let $\{\widehat{\mathcal{F}}_t\}_{t\geq 0}$ denote the natural filtration generated by $B$ and $N$.  The unknown parameter $\theta$ belongs to $\Theta$, a closed interval of $\R$. The coefficients $b: \Theta\times\R^d \to \R^d$, $\sigma: \R^d \to \R^d\otimes \R^d$ and $c: \R^d\times \R_0^d\to \R^d$ are measurable functions satisfying condition {\bf (A1)} below under which equation \eqref{c3eq1} has a unique $\{\widehat{\mathcal{F}}_t\}_{t \geq 0}$-adapted c\`adl\`ag solution $X^{\theta}$. We denote by $\P^{\theta}$ the probability law induced by $X^{\theta}$, and by $\E^{\theta}$ the expectation with respect to $\P^{\theta}$. 
For fixed $\theta_0\in \Theta$ and $n\geq 1$, we consider a discrete observation scheme at equidistant times $t_k=k \Delta_n$, $k \in \{0,...,n\}$ of the jump diffusion process $X^{\theta_0}$, which is denoted by $X^{n}=(X_{t_0}, X_{t_1},...,X_{t_n})$, where $\Delta_n\leq 1$ for all $n\geq 1$. We assume that the sequence of time-step sizes $\Delta_n$ satisfies the high-frequency and infinite horizon conditions: $\Delta_n\rightarrow 0$ and $n\Delta_n\rightarrow\infty$ as $n\rightarrow\infty$.

The aim of this paper is to prove the local asymptotic normality (LAN) property for estimators of $ \theta $ based on the observation $X^{n}$. As is well known, the LAN property is a fundamental concept in asymptotic theory of statistics, which was introduced by Le Cam \cite{LC60} and extended by Jeganathan \cite{JP82} to the local asymptotic mixed normality (LAMN) property. Initiated by Gobet \cite{G01}, some techniques of Malliavin calculus have recently been proved to be a powerful tool for the stochastic analysis of the log-likelihood ratio of diffusions. Concretely, Gobet \cite{G01} obtained the LAMN property from discrete observations at high frequency on the interval $[0,1]$ for multidimensional elliptic diffusion processes. For this purpose, the integration by parts formula of the Malliavin calculus is applied in order to obtain an explicit expression of the logarithm derivative of the transition density in terms of a conditional expectation involving the Skorohod integral. To treat the negligible terms, upper and lower Gaussian type bounds of the transition density are employed to show the convergence in probability to zero of sums of conditional expectations. In the same direction, the LAN property was established by Gobet \cite{G02} for multidimensional ergodic diffusions on the basis of discrete observations at high frequency on an increasing interval. Later on, Gobet and Gloter \cite{G3} obtained the LAMN property for integrated diffusions.

In the presence of jumps, several special cases have been studied. Precisely, the LAN property is established for some L\'evy processes whose transition density can be expressed in an explicit form. For instance, stable processes and normal inverse Gaussian L\'evy processes (see \cite{W, KM}). A\"it-Sahalia and Jacod \cite{AJ07} established the LAN property for a class of L\'evy processes involving a symmetric stable process, using a quasi-explicit representation of the density. 
{The LAN property for L\'evy processes observed discretely at low frequency can be found in \cite[Proposition 4.1, Lemma 2.12]{trabs}}. Recently, Kawai \cite{K13} deals with some cases where the  solution and transition density are semi-explicit. This implies that a Taylor expansion of the log-density with respect to the parameters can be obtained, which reduces the LAN property to a classical central limit theorem with independent increments and a residual term. This residual term depends strongly on estimates of the first and second derivatives of the logarithm of the density of the process, which can be treated using the integration by parts formula of Malliavin calculus. 

More recently, using a similar approach as in \cite{G01}, Cl\'ement et {\it al.} \cite{CDG14} establish the LAMN property for a stochastic process with jumps whose unknown parameters determine the jump structure. The number of jumps on the observation time interval is supposed to be deterministic and the corresponding jump times are given. As a consequence, upper and lower Gaussian type bounds for the transition density can be obtained and then used to treat the negligible terms. 

Later, Cl\'ement and Gloter \cite{CG15} prove the LAMN property for an SDE driven by a centered pure jump L\'evy process whose L\'evy measure is an $\alpha$-stable L\'evy measure near zero with $\alpha\in(1,2)$. For this, the authors verify the general sufficient conditions established by Jeganathan \cite{JP82}, which are essentially based on the $L^2$-regularity property of the transition density. Therefore, the upper and lower bounds for the density are not required for treating the negligible terms. The crucial point of the proof is the fact that using the time rescaling property of stable processes, the asymptotic behavior of the transition density and the derivative of its logarithm are completely determined by the density of a centered $\alpha$-stable L\'evy process and its derivative. In fact, as in \cite{G01} these quantities can be represented in terms of an expectation and a conditional expectation using the Malliavin calculus for jump processes developed by Bichteler, Gravereaux and Jacod \cite{BGJ87}. 

However, it seems that the validity of the LAN property for SDEs having a Brownian driver and a general jump structure has never been addressed in the literature. The first problem comes from the fact that the density function of the solution is not explicit in general. Secondly, the asymptotic behavior of the transition density and its logarithm derivative cannot be easily determined as in the cases of \cite{CG15} and \cite{K13}. As a consequence, the general sufficient conditions in \cite{JP82} cannot be used to show the LAN property for these general SDEs with jumps. Another problem is that the behavior of the transition density changes strongly due to the presence of jumps. In fact, one expects that the lower bound for the density of such SDEs with jumps will be controlled by the exponential behavior of the jump process, and that the upper bound will be controlled by the Gaussian behavior of the Wiener process. For instance, we consider the one-dimensional L\'evy process $(X_t^{x})_{t\geq 0}$ starting from $x\in \R$ defined by
\begin{equation*}
X_t^{x}=x+B_t+\sum_{i=1}^{N_t}Y_i,
\end{equation*}
where $B=(B_t)_{t\geq 0}$ is a standard Brownian motion, $N=(N_t)_{t\geq 0}$ is a Poisson process with intensity $\lambda>0$ independent of $B$, and $(Y_i)_{i\geq 0}$ are i.i.d. random variables independent of $B$ and $N$ with probability density $\frac{\varphi}{\lambda}$. Here, $\varphi(z)$ is the L\'evy density of the L\'evy process. It can be shown that when $\varphi$ is Gaussian, there exist constants $C_1, c_1, C, c>0$ such that for $0<t\leq 1$ and $\vert y-x\vert$ sufficiently large, the transition density $p(t,x,y)$ of $X_t^{x}$ satisfies
\begin{equation*}
C_1e^{-\lambda t}\exp\left(-c_1|y-x|\sqrt{\left|\ln\frac{|y-x|}{t}\right|}\right) \leq p(t,x,y)\leq \frac {C}{\sqrt{t}}\exp\left (
-c|y-x|\sqrt{\left|\ln\frac{|y-x|}{t}\right|}\right ),
\end{equation*}
and when $\varphi$ is exponential,
\begin{equation*}
C_1e^{-\lambda t}e^{-c_1|y-x|}\leq p(t,x,y)\leq \frac {C}{\sqrt{t}}e^{-c|y-x|}.
\end{equation*}
This shows that the upper and lower bounds of the density are of different characteristics making impossible to implement the argument in Gobet \cite{G01}, \cite{G02}. 

Our strategy is that in order to present the methodology used to prove the LAN property in the non-linear case \eqref{c3eq1}, it is essential to first well understand how the Malliavin calculus approach works in the presence of jumps, and how the Gaussian type estimates for the transition density conditioned on the jump structure can be derived and employed for a simple L\'evy process defined by
\begin{equation*}
X_t^{\theta,\sigma,\lambda}=x_0+\theta t +\sigma B_t + N_t -\lambda t,
\end{equation*}
where $N$ and $B$ are as above, and the parameters $\theta$, $\sigma$, and $\lambda$ are unknown. In \cite{KNT14}, we show the LAN property for this simple L\'evy process.

In this paper, our result uses the Malliavin calculus with respect to the Brownian motion initiated by Gobet \cite{G01}, in order to obtain an explicit expression of the logarithm derivative of the transition density. To deal with the expansion of the log-likelihood, one difficulty is the fact that the conditional expectations are computed under the probability measure $\P^{\theta(\ell)}$ coming from the Malliavin calculus, whereas the convergence is considered under the probability measure $\P^{\theta_0}\neq \P^{\theta(\ell)}$ where $ \theta(\ell) $ will be specified later on as a parameter value close to $ \theta_0 $. To solve this problem, we use Girsanov's theorem in order to change the measures (see Lemma \ref{c3Girsanov}). The technical Lemma \ref{c3lemma1} is given in order to measure the deviations of the Girsanov change of measure when the drift parameter changes. 

Let us mention that the goal of this paper is to define situations where the jump process will not ``deform" the Gaussian nature of the statistical experiment. As commented before, this cannot be achieved by simply obtaining upper and lower bounds for the transition density. Instead, we condition on the jump structure (number of jumps and amplitudes of jumps) and use large deviation type results which guarantee that the Gaussian nature of the statistical experiment will remain unchanged. Clearly, 
one can think of the reverse situation: That is, the case where the tails of the L\'evy process are heavy enough to perturb the Gaussian nature of the statistical experiment. 
Still, a central limit type theorem should be applicable and therefore one may believe that the LAN property should still hold if enough moment properties are assumed. More difficult to study are cases where the ellipticity condition is not satisfied. In general, it is challenging to ascertain validity of the LAN property. We leave as future research the study of these open problems. Here, to show the large deviation type estimates (see Lemma \ref{c3ordre}), lower and upper bounds for the transition density conditioned on the jump structure are strongly used.

This paper is organized as follows. In Section 2, we formulate the assumptions on equation \eqref{c3eq1} and state our main result in Theorem \ref{c3result}. Furthermore, some examples are given. Section 3 is devoted to give preliminary results needed for the proof of Theorem \ref{c3result}, such as an explicit expression for the logarithm derivative of the transition density using the Malliavin calculus. The proofs of these results are somewhat technical and are delayed to Appendices in order to provide the proof of our main result in a streamlined fashion. We prove our main result in Section 4.
 Finally, the proofs of some technical propositions and lemmas are presented in Section 5, where the upper bounds for the transition densities and the large deviation type estimates are obtained.

In this paper, we use $\overset{\P^{\theta}}{\longrightarrow}$ and $\overset{\mathcal{L}(\P^{\theta})}{\longrightarrow}$ to denote the convergence in probability and in law under $\P^{\theta}$, respectively. For $x\in\R^d$, $\vert x\vert$ denotes the Euclidean norm. $\vert A\vert$ denotes the Frobenius norm of the square matrix $A$, and tr($A$) denotes the trace. $^\ast$ denotes the transpose.
The compensated Poisson random measure is denoted by $\widetilde{N}(dt,dz):=N(dt,dz)-\nu(dz)dt$. Let $\widehat{Z}=(\widehat{Z}_t)_{t \geq 0}$ be a pure-jump L\'evy process associated with $N(dt,dz)$, i.e., $\widehat{Z}_t=\int_0^t\int_{\mathbb{R}_0^d}zN(ds,dz)$, for $t\geq 0$.

\section{Assumptions and main result}

We consider the following hypotheses on equation (\ref{c3eq1}).
\begin{list}{labelitemi}{\leftmargin=1cm}
\item[\bf(A1)] For any $\theta\in\Theta$, there exist a constant $L>0$ and a function $\zeta: \R_0^d\to \R_{+}$ of polynomial growth in $z$ with degree $m\geq 1$, i.e., $\zeta(z)\leq C(1+\vert z\vert^m)$ for some constant $C>0$, satisfying that $\int_{\R_0^d}\zeta^2(z)\nu(dz)<\infty$, such that for all $x,y \in \R^d$, $z\in \R_0^d$, 
\begin{align*}
&\vert b(\theta,x)-b(\theta,y)\vert+\vert\sigma(x)-\sigma(y)\vert\leq L\vert x-y\vert, \ \ \vert b(\theta,x)\vert \leq L\left(1+\vert x\vert\right),\\
&\vert c(x,z)-c(y,z)\vert\leq \zeta(z)\vert x-y\vert, \ \ \vert c(x,z)\vert\leq \zeta(z)(1+\vert x\vert).
\end{align*}
\vskip 12pt
\item[\bf(A2)] The diffusion matrix $\sigma$ satisfies an uniform ellipticity condition, that is, there exists a constant $c\geq 1$ such that for all $x, \xi\in\mathbb{R}^d$,
$$
\frac{1}{c}\vert\xi\vert^2\leq \vert\sigma(x)\xi\vert^2\leq c\vert\xi\vert^2.
$$
\vskip 12pt
\item[\bf(A3)] For all $(x,z)\in \R^d \times \R_{0}^d$ and $i\in\{1,\ldots,d\}$, $c_i(x,z)\neq 0$, and $c_i(x,0)=0$. Moreover, there exists a constant $C>0$ such that for all $z\in \R_0^d$,
$$
\inf_{x \in \R^d}\vert c(x,z)\vert\geq C \vert z\vert.
$$
\vskip 12pt
\item[\bf(A4)] The functions $b$, $\sigma$ and $c$ are of class $C^1$ w.r.t. $\theta$ and $x$. Each partial derivative $\partial_{\theta}b$, $\partial_{x_i}b$, $\partial_{x_i}\sigma$ and $\partial_{x_i}c$ is of class $C^1$ w.r.t. $x$.
Moreover, there exist positive constants $C, q, \epsilon$, independent of $\left(\theta, \theta_1, \theta_2, x, y, u,v, z\right)\in\Theta^3\times(\R^d)^4\times \R_0^d$ such that
\begin{itemize}
\item[\text{(a)}] \; $\vert \partial_{x_i}b(\theta,x) \vert + \vert \partial_{x_i}\sigma(x) \vert\leq C$, and $\vert \partial_{x_i}c(x,z) \vert\leq \zeta(z)$;
\item[\text{(b)}] \; $\vert h(\cdot,x)\vert\leq C\left(1+\vert x\vert^q\right)$ for $h(\cdot,x)=\partial_{\theta}b(\theta,x), \partial_{x_i,x_j}^2b(\theta,x), \partial_{x_i,\theta}^2b(\theta,x)$ or $\partial_{x_i,x_j}^2\sigma(x)$; 
\item[\text{(c)}] \; $\vert \partial_{x_i,x_j}^2c(x,z)\vert \leq C\zeta(z)\left(1+\vert x\vert\right)$; 
\item[\text{(d)}] \; $\vert \partial_{\theta}b(\theta_1,x)-\partial_{\theta}b(\theta_2,x)\vert \leq C\vert \theta_1 - \theta_2\vert^{\epsilon}\left(1+\vert x\vert^q\right)$; 
\item[\text{(e)}] \; $\vert \partial_{\theta}b(\theta,x)-\partial_{\theta}b(\theta,y)\vert \leq C\vert x-y\vert$; 
\end{itemize}
\vskip 12pt
\item[\bf(A5)] The process $X^{\theta_0}$ is ergodic, that is, there exists a unique invariant probability measure $\pi_{\theta_0}(dx)$ such that as $T\rightarrow\infty$,
\begin{equation*}
\dfrac{1}{T}\int_0^{T}g(X_t^{\theta_0})dt\overset{\P^{\ta_0}}{\longrightarrow}\int_{\mathbb{R}^d}g(x)\pi_{\theta_0}(dx),
\end{equation*}
for any $\pi_{\theta_0}$-integrable function $g:\R^d \to \R^{d'}$. Moreover, $\int_{\mathbb{R}^d}\vert x\vert^p\pi_{\theta_0}(dx)<\infty$, for any $p\geq 0$.
\vskip 12pt
\item[\bf(A6)] For any $p\geq 1$, $\int_{\R_0^d} \vert z\vert^p\nu(dz)<\infty$.
\vskip 12pt
\item[\bf(A7)] 
There exist constants $\rho_1>0$ and $\upsilon\in(0,\frac{1}{2})$ such that $\int_{\{\vert z\vert \leq\rho_1\Delta_n^{\upsilon}\}}\nu(dz)\rightarrow 0$ as $n\to\infty$.
\vskip 12pt
\item[\bf(A8)] 
\begin{itemize}
	\item[\text{(a)}] \; $\vert\det\nabla\psi(v)\vert\geq\eta(z)$, and  $\vert\nabla\psi^{-1}(v)u\vert\geq \frac{\vert u\vert}{\beta(z)}$,
	where $\psi(v)=f(\frac{v}{\sqrt{1-\vert v\vert^2}}+c(\frac{v}{\sqrt{1-\vert v\vert^2}},z))-f(\frac{x}{\sqrt{1-\vert x\vert^2}}+c(\frac{x}{\sqrt{1-\vert x\vert^2}},z))$, for $\vert v\vert<1$, $\vert x\vert<1$, and $\psi^{-1}$ is the inversion function of $\psi$. Here, $f(y)=\frac{y}{\sqrt{1+\vert y\vert^2}}$, for $y\in\R^d$, $\eta(z)=\frac{C}{\vert z\vert^3}{\bf 1}_{\{\vert z\vert>1\}}+C{\bf 1}_{\{\vert z\vert\leq 1\}}$, and $\beta(z)=C\vert z\vert^3{\bf 1}_{\{\vert z\vert>1\}}+C{\bf 1}_{\{\vert z\vert\leq 1\}}$.
	\item[\text{(b)}] \;
	The matrix $\nabla f\sigma$ satisfies an ellipticity assumption in $\R^d$. That is, for all $x\in\R^d$,
	$$
	\inf_{\xi\in\R^d: \vert \xi\vert=1}\vert\nabla f(x)\sigma(x)\xi\vert^2>0.
	$$ 
\end{itemize}
\end{list}

\noindent
{\bf Interpretation of the above hypotheses:}
Lipschitz continuity and linear growth conditions {\bf (A1)} on the coefficients $b$, $\sigma$ and $c$ ensure the existence of a unique c\`adl\`ag and adapted process $X^{\theta}=(X_t^{\theta})_{t \geq 0}$ solution to equation \eqref{c3eq1} on the filtered probability space $(\Omega,\mathcal{F},\{\widehat{\mathcal{F}}_t\}_{t\geq 0},\P)$ (see \cite[Theorem III.2.32]{JS03}). The drift coefficient is assumed to be unbounded, which will lead to a technical proof of upper bounds of the transition density conditioned on the jump structure by using a transformation of equation (\ref{c3eq1}) via the function $f$ defined in {\bf(A8)}\textnormal{(a)} (see Lemma \ref{c3lemma8}). The case of a bounded drift coefficient will be discussed in Subsection \ref{bounded}. Moreover, conditions on the jump coefficient {\bf (A1)} and {\bf (A3)} are needed in order to control the upper and lower bounds of the jump amplitudes of the L\'evy process (see the proof of Lemma \ref{c3ordre}). 

To be able to apply the Malliavin calculus, the uniform ellipticity condition {\bf(A2)} and regularity conditions {\bf(A4)}\textnormal{(a)}-\textnormal{(e)} on the coefficients are required. Condition {\bf(A6)} related to the finite moments of any order of the L\'evy measure is used to estimate the jump components.

Recall that ergodicity in the sense of {\bf(A5)} was shown by Masuda in \cite[Theorem 2.1]{M07} for a class of jump diffusion processes. Several examples of ergodic diffusion processes with jumps are given in \cite{M07, M08, Sh06}. Moreover, results on ergodicity and exponential ergodicity for diffusion processes with jumps have been established by Masuda \cite{M07, M08}. In addition, Kulik \cite{K09} provides a set of sufficient conditions for the exponential ergodicity of diffusion processes with jumps without Gaussian part and gives some examples. More recently, Qiao \cite{Q14} addresses the exponential ergodicity for SDEs with jumps with non-Lipschitz coefficients. However, ergodicity and exponentially ergodicity in these papers are understood in the sense of \cite{MT}, which are both stronger than in the sense of {\bf (A5)}.

Condition {\bf(A7)} controls the behavior of small jumps of the L\'evy process, which is determined by the mass of the L\'evy measure or jump size distribution around the origin. Indeed, this condition which is used in Lemma \ref{lemma6}, expresses the fact that the small jumps do not interfere with the Gaussian behavior of the transition density. This can be interpreted that the jump component is ``dominated" over by the Gaussian component in a small time interval. 
This is the main restriction which implies that the total L\'evy measure is finite and therefore we are dealing with the case where the jumps in \eqref{c3eq1} are given by a compound Poisson process. We have preferred this presentation in order to point out that the general problem for SDE driven by a general L\'evy process remains open.

Conditions {\bf(A1)}-{\bf(A2)} imply that the law of the discrete observation $(X_{t_0}^{\theta},X_{t_1}^{\theta},\ldots,X_{t_n}^{\theta})$ of the process $(X_t^{\theta})_{t \geq 0}$ has a density in $(\R^{d})^{n+1}$ that we denote by $p_n(\cdot;\theta)$. In particular, $p_n(\cdot;\theta_0)$ denotes the density of the random vector $X^n$.

In order to explain why we need the conditions {\bf{(A8)}}\textnormal{(a)} and {\bf{(A8)}}\textnormal{(b)}, note that in the classical Malliavin calculus one usually considers bounded smooth coefficients in order to prove that the density of $X_t^\theta$ has Gaussian upper bounds. In the present case, the drift coefficient has linear growth and therefore classical techniques do not apply. In \cite{G02}, a Girsanov's theorem approach is used but this argument does not work here due to the presence of jumps. Instead, we perform a change of variables $V_t^\theta=f(X_t^\theta)$ so that the random variable $V_t^\theta$ has bounded drift. Then Gaussian like estimates for $X_t^\theta$ can be obtained at the expense of these two conditions. Additionally, we do not require a squared exponential moment condition or that the coefficients have to be $ C^{1+\alpha} $ for some $ \alpha>0 $ like in \cite{G02}. We show in the next example that these conditions may be easily verified in the following four classes of L\'evy measures.

\begin{example}We assume $d=1$ and $c(x,z)=z$ in this example for simplicity.\\
\textnormal{1)}  Changing variables $u:=\frac{v}{\sqrt{1-v^2}}$, it is easy to check that
\begin{equation*}\begin{split}
\frac{8}{\left(\sqrt{z^2+4}+\vert z\vert\right)^3}\leq \psi'(v)\leq\frac{\left(\sqrt{z^2+4}+\vert z\vert\right)^3}{8},
\end{split}
\end{equation*}
for all $\vert v\vert<1$. Then, the inverse function theorem implies that $(\psi^{-1})'(v)=\frac{1}{\psi'(\psi^{-1}(v))}$. Therefore, condition {\bf(A8)}\textnormal{(a)} holds.
\vskip 5pt
\textnormal{2)} Then $f'(x)=(1+x^2)^{-\frac{3}{2}}>0$, for all $x\in\R$. Thus, condition {\bf{(A8)}}\textnormal{(b)} holds.
\vskip 5pt
\textnormal{3)} Class 1: Assume $\nu(dz)$ has a support on $\{\vert z \vert \geq C\}$ for some constant $C>0$. Then condition {\bf (A7)} holds for any $\rho_1, \upsilon>0$ since $\int_{\{\vert z\vert \leq\rho_1\Delta_n^{\upsilon}\}}\nu(dz)=0$, for $n$ sufficiently large.
\vskip 5pt
\textnormal{4)} Class 2: Assume that $\nu(dz)=\frac{dz}{\vert z\vert^{1+\alpha}}{\bf 1}_{\{\vert z\vert\leq 1\}}$, where $\alpha<0$. Condition {\bf(A6)} holds since for any $p\geq 1$, 
\begin{equation*}\begin{split}
\int_{\R_0} \vert z\vert^p\nu(dz)=\dfrac{2}{p-\alpha}<\infty.
\end{split}
\end{equation*}

Condition {\bf (A7)} holds for any $\rho_1>0$, $\upsilon>0$ since for $n$ sufficiently large,
\begin{equation*}\begin{split}
\int_{-\rho_1\Delta_n^{\upsilon}}^{\rho_1\Delta_n^{\upsilon}}\nu(dz)=-\dfrac{2}{\alpha}\rho_1^{-\alpha}\Delta_n^{-\alpha\upsilon},
\end{split}
\end{equation*}
which tends to zero as $n\to\infty$.
\vskip 5pt
\textnormal{5)} Class 3: Assume  that $\nu(dz)=C_1\varphi(z) {\bf 1}_{\{\vert z \vert > 1\}}dz+C_2\vert z\vert^{\kappa}{\bf 1}_{\{\vert z \vert \leq 1\}}dz$ for some constants $C_1, C_2>0$, where $\varphi$ is the standard Gaussian density and $\kappa>-1$. Condition {\bf(A6)} holds since for any $p\geq 1$, 
\begin{equation*}\begin{split}
\int_{\R_0}\vert z\vert^p\nu(dz)&=\frac{C_1}{\sqrt{2\pi}}\int_{\{\vert z \vert > 1\}}\vert z\vert^pe^{-\frac{\vert z\vert^2}{2}}dz+C_2\int_{\{\vert z \vert \leq 1\}}\vert z\vert^{p+\kappa}dz\\
&<C_1(p-1)!!+\dfrac{2C_2}{p+\kappa+1}<\infty.
\end{split}
\end{equation*}

Condition {\bf (A7)} holds for any $\rho_1>0$, $\upsilon>0$ since for $n$ sufficiently large,
\begin{equation*}\begin{split}
\int_{-\rho_1\Delta_n^{\upsilon}}^{\rho_1\Delta_n^{\upsilon}}\nu(dz)=\dfrac{2C_2}{\kappa+1}\rho_1^{\kappa+1}\Delta_n^{\upsilon\left(\kappa+1\right)},
\end{split}
\end{equation*}
which tends to zero as $n\to\infty$.
\vskip 5pt
\textnormal{6)} Class 4: Assume $\nu(dz)=C_1\varphi(z) {\bf 1}_{\{\vert z \vert > 1\}}dz+C_2\vert z\vert^{\kappa}{\bf 1}_{\{\vert z \vert \leq 1\}}dz$ for some constants $C_1, C_2>0$, where $\kappa>-1$ and $\varphi$ is the L\'evy measure of a symmetric gamma process, that is, $\varphi(z)=\alpha e^{-\beta \vert z \vert}\vert z \vert^{-1}$ for some $\alpha>0$ and $\beta>0$. Condition {\bf(A6)} holds since for any $p\geq 1$, 
\begin{equation*}\begin{split}
\int_{\R_0}\vert z\vert^p\nu(dz)&=C_1\alpha\int_{\{\vert z \vert > 1\}}\vert z\vert^{p-1}e^{-\beta \vert z \vert}dz+C_2\int_{\{\vert z \vert \leq 1\}}\vert z\vert^{p+\kappa}dz\\
&\leq\dfrac{C_1\alpha}{\beta^{p+1}}\int_{\{\vert z \vert > 1\}}\vert z\vert^{p-1}\dfrac{(p+1)!}{\vert z\vert^{p+1}}dz+C_2\int_{\{\vert z \vert \leq 1\}}\vert z\vert^{p+\kappa}dz\\
&<\dfrac{2C_1\alpha(p+1)!}{\beta^{p+1}}+\dfrac{2C_2}{p+\kappa+1}<\infty,
\end{split}
\end{equation*}
where we have used the inequality $e^{-x}<\frac{p!}{x^p}$, valid for any $x>0$ and $p\geq 1$.

Proceeding similarly as in example \textnormal{4)}, condition {\bf (A7)} holds for any $\rho_1>0$, $\upsilon>0$.
\end{example}

The main result of this paper is the following LAN property.
\begin{theorem}\label{c3result} Assume conditions {\bf(A1)}-{\bf(A8)}. Then, the LAN property holds for the likelihood at $\theta_0$ with rate of convergence $\sqrt{n\Delta_n}$ and asymptotic Fisher information $\Gamma(\theta_0)$. That is, for all $u\in\R$, as $n\to\infty$,
\begin{equation*}
\log\dfrac{p_n(X^{n};\theta_n)}{p_n(X^{n};\theta_0)}\overset{\mathcal{L}(\P^{\theta_0})}{\longrightarrow}u\mathcal{N}\left(0,\Gamma(\theta_0)\right)-\dfrac{u^2}{2}\Gamma\left(\theta_0\right),
\end{equation*}
where $\theta_n:=\theta_0+\frac{u}{\sqrt{n\Delta_n}}$, and $\mathcal{N}(0,\Gamma(\theta_0))$ is a centered Gaussian random variable with variance
$$
\Gamma\left(\theta_0\right)=\int_{\R^d}\left(\partial_{\theta}b(\theta_0,x)\right)^{\ast}
(\sigma\sigma^{\ast})^{-1}(x)\partial_{\theta}b(\theta_0,x)\pi_{\theta_0}(dx).
$$
\end{theorem}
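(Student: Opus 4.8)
The plan is to follow the Malliavin calculus approach pioneered by Gobet, adapted to the jump setting as indicated in the introduction. Writing the log-likelihood ratio as a telescoping sum over the observation intervals,
\begin{equation*}
\log\frac{p_n(X^n;\theta_n)}{p_n(X^n;\theta_0)}=\sum_{k=1}^n\log\frac{p(\Delta_n,X_{t_{k-1}},X_{t_k};\theta_n)}{p(\Delta_n,X_{t_{k-1}},X_{t_k};\theta_0)},
\end{equation*}
where $p(t,x,y;\theta)$ is the transition density, I would first express each summand via the fundamental theorem of calculus as $\int_0^1\partial_\theta\log p(\Delta_n,X_{t_{k-1}},X_{t_k};\theta(\ell))\,\frac{u}{\sqrt{n\Delta_n}}\,d\ell$ with $\theta(\ell)=\theta_0+\ell(\theta_n-\theta_0)$. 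The first key step is to obtain, from the integration by parts formula of Malliavin calculus with respect to the Brownian motion (as provided by the preliminary results of Section 3), an explicit representation of $\partial_\theta\log p(\Delta_n,x,y;\theta)$ as a conditional expectation of a Skorohod integral given $\{X_0=x,X_{\Delta_n}=y\}$ under $\P^{\theta}$.

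The second key step is to identify the leading term. After inserting the explicit representation, one expects the dominant contribution to be
\begin{equation*}
\frac{u}{\sqrt{n\Delta_n}}\sum_{k=1}^n\big(\partial_\theta b(\theta_0,X_{t_{k-1}})\big)^\ast(\sigma\sigma^\ast)^{-1}(X_{t_{k-1}})\,\big(X_{t_k}-X_{t_{k-1}}-b(\theta_0,X_{t_{k-1}})\Delta_n\big),
\end{equation*}
or rather its continuous-martingale analogue. On the event that no jump occurs in $(t_{k-1},t_k]$ — which has probability $1-\lambda\Delta_n+o(\Delta_n)$ — the increment behaves like a Gaussian increment of the diffusion, and a central limit theorem for triangular arrays of martingale differences, combined with the ergodic theorem {\bf(A5)}, yields convergence of this term to $u\mathcal{N}(0,\Gamma(\theta_0))$ with the quadratic variation converging to $\Gamma(\theta_0)$; the quadratic-characteristic computation simultaneously produces the $-\frac{u^2}{2}\Gamma(\theta_0)$ drift, giving the LAN decomposition. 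Here condition {\bf(A2)} ensures $(\sigma\sigma^\ast)^{-1}$ is well defined and bounded, {\bf(A4)}(b),(e) control $\partial_\theta b$, and {\bf(A5)} supplies the ergodic averaging together with the moment bounds needed for uniform integrability.

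The third and main step — and the expected principal obstacle — is to show that all remaining terms are negligible in $\P^{\theta_0}$-probability. This splits into several difficulties. First, the conditional expectations in the Malliavin representation are computed under $\P^{\theta(\ell)}$ while convergence is sought under $\P^{\theta_0}$; this is handled by Girsanov's theorem (Lemma \ref{c3Girsanov}) to change measure, with the deviation of the Radon--Nikodym density controlled by Lemma \ref{c3lemma1}, exploiting that $\theta(\ell)-\theta_0=O(1/\sqrt{n\Delta_n})$. Second, and most delicate, is controlling the error terms on the event that a jump \emph{does} occur in an interval: there the Gaussian structure is a priori destroyed, and one must invoke the large deviation type estimates (Lemma \ref{c3ordre}) together with the upper and lower bounds for the transition density conditioned on the jump structure (Lemmas \ref{c3lemma8}, \ref{lemma6}) to show that such events, weighted by the relevant Malliavin weights, contribute asymptotically nothing. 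Conditions {\bf(A3)}, {\bf(A6)}, {\bf(A7)}, {\bf(A8)} enter precisely here: {\bf(A7)} guarantees small jumps do not perturb the Gaussian behavior, {\bf(A3)} and {\bf(A1)} bound the jump amplitudes from below and above, {\bf(A8)} yields the conditional density bounds via the change of variables $V^\theta=f(X^\theta)$, and {\bf(A6)} provides the moment control on the jump sizes. Finally, the standard Taylor-remainder terms — those arising from replacing $\partial_\theta\log p$ at $\theta(\ell)$ by its value at $\theta_0$, and from discretization — are shown to vanish using the Hölder continuity {\bf(A4)}(d) of $\partial_\theta b$, the regularity {\bf(A4)}(a)--(c), and $\Delta_n\to0$, $n\Delta_n\to\infty$. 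Assembling the leading term's weak convergence with the negligibility of every remainder yields the claimed LAN property.
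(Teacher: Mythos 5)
Your proposal is correct and follows essentially the same route as the paper: the Markov decomposition and mean-value/fundamental-theorem integration in the parameter, the Malliavin integration-by-parts representation of $\partial_\theta\log p^\theta$ (Proposition \ref{c3Gobet} and Lemma \ref{c3delta}), the identification of the leading Gaussian/drift term $\xi_{k,n}$ handled by the triangular-array CLT (Lemma \ref{clt}) together with the ergodic theorem, and the treatment of remainders by Girsanov's change of measure (Lemma \ref{c3Girsanov}, Lemma \ref{c3lemma1}) plus large-deviation estimates conditioned on the jump structure (Lemma \ref{c3ordre}, Lemma \ref{c3lemma8}). This is the structure of Section 4, so no further comparison is needed.
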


\begin{remark}
To simplify the exposition, our result is established for the scalar parameter case. The multidimensional parameter case can be treated using a decomposition on the components of parameters and the same computations as the scalar parameter case. 
\end{remark}

\begin{remark}
In \cite{KNT14} we estimate the drift and diffusion parameters, and the jump intensity of a simple L\'evy process. Therefore, Theorem \ref{c3result} is a non-linear extension of the result in \cite{KNT14} when the unknown parameter appears only in the drift coefficient.
\end{remark}

\begin{remark}\label{afi} We recover the same formula for the asymptotic Fisher information $\Gamma(\theta_0)$ of ergodic diffusion processes without jumps obtained by Gobet in \cite[Theorem 4.1]{G02}. This comes from the fact that the jump component is dominated over by the Gaussian component in a small time interval, which will be seen in Section \ref{proof}.   
\end{remark}

\begin{remark}
When the LAN property holds at $\theta_0$, convolution and minimax theorems can be applied (see \cite{Haj72}, \cite{CY90}). On one hand, the asymptotically efficient estimators of the parameter $\theta_0$ are defined in terms of the optimal asymptotic variance $\Gamma(\theta_0)^{-1}$ and the optimal rate of convergence $\sqrt{n\Delta_n}$. On the other hand, one can derive the lower bound for the asymptotic variance of estimators given by $\Gamma(\theta_0)^{-1}$.\\
Let us mention that the question of asymptotic efficiency of the estimators based on discrete observations of ergodic diffusions with jumps was solved e.g. by Shimizu and Yoshida \cite{SY06} and Mai \cite{M14}. The estimators in \cite{SY06} are constructed from a contrast function which is based on a discretization of the likelihood function associated to the continuous observations of an ergodic diffusion with jumps whose drift and diffusion coefficients as well as its jump coefficient depend on unknown parameters. The drift parameter of the Ornstein-Uhlenbeck processes driven by a L\'evy process is dealt with in \cite{M14} where the estimators are constructed from a discretization of the time-continuous maximum likelihood estimators. These estimators are asymptotically efficient since their variance attains the lower bound given by $\Gamma(\theta_0)^{-1}$ with the optimal rate of convergence (see \cite[Theorem 2.1, Remark 2.2]{SY06}, \cite[Theorem 3.5, Remark 3.6]{M14} and \cite[Theorem 4.6]{M14}).
For other estimators for $ \theta $ based on quasi-likelihood estimators see Masuda \cite{M13} and Ogihara and Yoshida \cite{OY}.
\end{remark}

\begin{example}\label{c4example} \textnormal{1)} Consider the one-dimensional Ornstein-Uhlenbeck process with jumps defined as
$$
X_t^{\theta}=x_0-\theta\int_0^tX_s^{\theta}ds+\sigma B_t+\int_0^t\int_{\R_0}z\widetilde{N}(ds,dz),
$$
where $\theta>0$, $\sigma\in\R_0$. Assume that the L\'evy measure satisfies condition {\bf(A6)}. Then $X^{\theta}$ is ergodic in the sense of {\bf(A5)}. Furthermore, the invariant probability measure $\pi_{\theta}(dx)$ can be computed explicitly (see \cite[Theorem 17.5 and Corollary 17.9]{SK} and \cite[Theorem 2.6]{M07}), and satisfies $\int_{\mathbb{R}}\vert x\vert^p\pi_{\theta}(dx)<\infty$, for any $p\geq 0$. In particular,
$$
\Gamma(\theta)=\int_{\R}\frac{x^2}{\sigma^2}\pi_{\theta}(dx)=\dfrac{1}{2\theta}\left(1+\frac{1}{\sigma^2}\int_{\R_0}z^2\nu(dz)\right).
$$

Notice that conditions {\bf(A1)}-{\bf(A4)} and {\bf (A8)} hold. Assume further condition {\bf(A7)}. As a consequence of Theorem \ref{c3result}, the LAN property holds with rate of convergence $\sqrt{n\Delta_n}$ and asymptotic Fisher information $\Gamma(\theta_0)$.

\textnormal{2)} Consider the one-dimensional process
$$
X_t^{\theta}=x_0+\theta t+\sigma B_t+\int_0^t\int_{\R_0}z\widetilde{N}(ds,dz),
$$
where $\theta\in\R$ and $\sigma\in\R_0$. Assume that the L\'evy measure satisfies conditions {\bf(A6)} and {\bf(A7)}. Notice that conditions {\bf(A1)}-{\bf(A4)} and {\bf (A8)} hold. Then the LAN property holds with rate of convergence $\sqrt{n\Delta_n}$ and asymptotic Fisher information $\Gamma(\theta_0)=\frac{1}{\sigma^2}$. In this case, condition {\bf(A5)} is not needed since $\Gamma(\theta_0)$ can be obtained without using the ergodicity assumption, but thanks to the simple structure of the drift and diffusion coefficients (see \eqref{c3ergo} below).
\end{example}

As usual, constants will be denoted by $C$ or $c$ and they will always be independent of time and $\Delta_n$ but may depend on bounds for the set $\Theta$. They may change of value from one line to the next. 

\section{Preliminaries}
In this section, we introduce some preliminary results needed for the proof of Theorem \ref{c3result}. The proofs are given in the Appendix so that the reader can access the proof of the main result in the next section. 

In order to motivate the preliminary results to follow, recall that in order to deal with the log-likelihood ratio in Theorem \ref{c3result}, we may use the Markov property to rewrite the global likelihood function in terms of a product of transition densities and then apply a mean value theorem. We start as in Gobet \cite{G01} by applying the integration by parts formula of the Malliavin calculus on each interval $[t_k,t_{k+1}]$ to obtain an explicit expression for the logarithm derivative of the transition density. In order to avoid confusion with the observed process $X^{\theta}$, we introduce an extra probabilistic representation of $X^{\theta}$ for which the Malliavin calculus will be applied. That is, we consider on the same probability space $(\Omega, \tf, \P)$ the flow $Y^{\theta}(s,x)=(Y_t^{\theta}(s,x), t\geq s)$, $x\in\R^d$ on the time interval $[s,\infty)$ and with initial condition $Y_{s}^{\theta}(s,x)=x$ satisfying
\begin{equation}\label{c3flow}\begin{split}
Y_t^{\theta}(s,x)&=x+\int_{s}^tb(\theta,Y_u^{\theta}(s,x))du+\int_{s}^t\sigma(Y_u^{\theta}(s,x))dW_u\\
&\qquad+\int_{s}^t\int_{\R_0^d}c(Y_{u-}^{\theta}(s,x),z)\left(M(du,dz)-\nu(dz)du\right),
\end{split}
\end{equation}
where $W=(W_t)_{t\geq 0}$ is a Brownian motion, $M(dt,dz)$ is a Poisson random measure with intensity measure $\nu(dz)dt$ associated with a pure-jump L\'evy process $\widetilde{Z}=(\widetilde{Z}_t)_{t \geq 0}$, i.e., $\widetilde{Z}_t=\int_0^t\int_{\mathbb{R}_0^d}zM(ds,dz)$. Here, $\widetilde{Z}$ is an independent copy of $\widehat{Z}$, and $\widetilde{M}(dt,dz):=M(dt,dz)-\nu(dz)dt$ denotes the compensated Poisson random measure. The processes $(B, N, W, M)$ are mutually independent. In particular, we write $Y_t^{\theta}\equiv Y_t^{\theta}(0,x_0)$, for all $t\geq 0$. That is, 
\begin{equation}\label{c3eq1rajoute}\begin{split}
Y_t^{\theta}&=x_0+\int_{0}^tb(\theta,Y_u^{\theta})du+\int_{0}^t\sigma(Y_u^{\theta})dW_u+\int_{0}^t\int_{\R_0^d}c(Y_{u-}^{\theta},z)\widetilde{M}(du,dz).
\end{split}
\end{equation}

We will apply the Malliavin calculus on the Wiener space induced by $W$. Let $D$ and $\delta$ denote the Malliavin derivative and the Skorohod integral w.r.t. $W$ on each interval $[t_k, t_{k+1}]$, respectively. We denote by $\mathbb{D}^{1,2}$ the space of random variables differentiable in the sense of Malliavin, and by $\textnormal{Dom}\ \delta$ the domain of $\delta$. Notice that the Malliavin calculus adapted to our framework is introduced, for instance, in \cite{P08}. Recall that for a differentiable random variable $F\in\mathbb{D}^{1,2}$, its Malliavin derivative is denoted by $DF=(D^1F,\ldots,D^dF)$, where $D^i$ is the Malliavin derivative in the $i$th direction $W^i$ of the Brownian motion $W=(W^1,\ldots,W^d)$, for $i\in\{1,\ldots,d\}$. For a $\R^d$-valued process $U=(U^1,\ldots,U^d)\in \textnormal{Dom}\ \delta$, the Skorohod integral of $U$ is defined as $\delta(U)=\sum_{i=1}^{d}\delta^i(U^i)$, where $\delta^i$ denotes the Skorohod integral w.r.t. $W^i$.

For any $k \in \{0,...,n-1\}$, under conditions {\bf(A1)}, {\bf(A2)} and {\bf(A4)}\textnormal{(a)}-\textnormal{(c)}, the process $(Y_t^{\theta}(t_k,x), t\in [t_k,t_{k+1}])$ is differentiable w.r.t. $x$ and $\theta$, and we denote by $(\nabla_{x}Y_t^{\theta}(t_k,x), t\in [t_k,t_{k+1}])$ and $(\partial_{\theta}Y_t^{\theta}(t_k,x), t\in [t_k,t_{k+1}])$ the Jacobian matrix and vector, respectively (see Kunita \cite{K97}). These processes are the solutions to the linear equations
\begin{align}
&\nabla_xY_t^{\theta}(t_k,x)=\textup{I}_d+\int_{t_k}^t\nabla_xb(\theta,Y_s^{\theta}(t_k,x))
\nabla_xY_s^{\theta}(t_k,x)ds \label{partialx}\\
&+\sum_{i=1}^{d}\int_{t_k}^t\nabla_x\sigma_i(Y_s^{\theta}(t_k,x))\nabla_x
Y_s^{\theta}(t_k,x)dW_{s}^i+\int_{t_k}^t\int_{\mathbb{R}_0^d}
\nabla_xc(Y_{s-}^{\theta}(t_k,x),z)\nabla_x
Y_s^{\theta}(t_k,x)\widetilde{M}(ds,dz),\notag\\
&\partial_{\theta}Y_t^{\theta}(t_k,x)=\int_{t_k}^t\left(\partial_{\theta}b(\theta,Y_s^{\theta}(t_k,x))
+\nabla_xb(\theta,Y_s^{\theta}(t_k,x))\partial_{\theta}Y_s^{\theta}(t_k,x)\right)ds \label{partialtheta}\\
&+\sum_{i=1}^{d}\int_{t_k}^t
\nabla_x\sigma_i(Y_s^{\theta}(t_k,x))\partial_{\theta}Y_s^{\theta}(t_k,x)dW_{s}^i+\int_{t_k}^t\int_{\mathbb{R}_0^d}
\nabla_xc(Y_{s-}^{\theta}(t_k,x),z)\partial_{\theta}
Y_s^{\theta}(t_k,x)\widetilde{M}(ds,dz),\notag
\end{align}
where $\sigma_1,...,\sigma_d:\R^d \rightarrow \R^d$ denote the columns of the matrix $\sigma$.

Moreover, the random variables $Y_t^{\theta}(t_k,x)$, $\nabla_xY_t^{\theta}(t_k,x)$, $(\nabla_xY_t^{\theta}(t_k,x))^{-1}$ and $\partial_{\theta}Y_t^{\theta}(t_k,x)$ belong to $\mathbb{D}^{1,2}$ for any $t\in[t_k,t_{k+1}]$ (see \cite[Theorem 3]{P08}). On the other hand, the Malliavin derivative $D_sY_t^{\theta}(t_k,x)$ satisfies the following linear equation
\begin{align*}
&D_sY_t^{\theta}(t_k,x)=\sigma(Y_s^{\theta}(t_k,x))+\int_{s}^t\nabla_xb(\theta,Y_u^{\theta}(t_k,x))
D_sY_u^{\theta}(t_k,x)du\\
&+\sum_{i=1}^{d}\int_{s}^t\nabla_x\sigma_i(Y_u^{\theta}(t_k,x))D_sY_u^{\theta}(t_k,x)dW_{u}^i+\int_{s}^t\int_{\mathbb{R}_0^d}
\nabla_xc(Y_{u-}^{\theta}(t_k,x),z)D_sY_u^{\theta}(t_k,x)\widetilde{M}(du,dz),
\end{align*}
for $s\leq t$ a.e., and $D_sY_t^{\theta}(t_k,x)=0$ for $s>t$ a.e. By \cite[Proposition 7]{P08}, it holds that
$$
D_sY_t^{\theta}(t_k,x)=\nabla_xY_t^{\theta}(t_k,x)(\nabla_xY_s^{\theta}(t_k,x))^{-1}\sigma(Y_s^{\theta}(t_k,x)){\bf 1}_{[t_k,t]}(s).
$$

We consider the canonical filtered probability spaces $(\Omega^i, \mathcal{F}^i,\{\mathcal{F}_t^i\}_{t \geq 0}, \P^i)$, $i\in\{1,\ldots,4\}$, associated to each of the four processes $B, N(dt,dz), W$ and $M(dt,dz)$ . Then, $(\Omega,\mathcal{F},\{\mathcal{F}_t\}_{t \geq 0},\P)$ is the product filtered probability space of the four canonical spaces.

We set $\widehat{\Omega}=\Omega^1\times \Omega^2$, $\widehat{\mathcal{F}}=\mathcal{F}^1\otimes\mathcal{F}^2$, $\widehat{\P}=\P^1\otimes\P^2$, $\widehat{\mathcal{F}}_t=\mathcal{F}_t^1\otimes\mathcal{F}_t^2$, $\widetilde{\Omega}=\Omega^3\times \Omega^4$, $\widetilde{\mathcal{F}}=\mathcal{F}^3\otimes\mathcal{F}^4$, $\widetilde{\P}=\P^3\otimes\P^4$, and $\widetilde{\mathcal{F}}_t=\mathcal{F}_t^3\otimes\mathcal{F}_t^4$. Then, $\Omega=\widehat{\Omega}\times\widetilde{\Omega}$, $\mathcal{F}=\widehat{\mathcal{F}}\otimes\widetilde{\mathcal{F}}$, $\P=\widehat{\P}\otimes\widetilde{\P}$, $\mathcal{F}_t=\widehat{\mathcal{F}}_t\otimes\widetilde{\mathcal{F}}_t$, and $\E=\widehat{\E}\otimes\widetilde{\E}$, where $\E$, $\widehat{\E}$, $\widetilde{\E}$ denote the expectation w.r.t. $\P$, $\widehat{\P}$ and $\widetilde{\P}$, respectively. For all $A\in \widetilde{\mathcal{F}}$ and $x\in\R^d$, we set $\widetilde{\P}_x^{\theta}(A)=\widetilde{\E}[{\bf 1}_{A}\vert Y_{t_{k}}^{\theta}=x]
 $. We denote by $\widetilde{\E}_x^{\theta}$ the expectation w.r.t. $\widetilde{\P}_x^{\theta}$. That is, for all $\widetilde{\mathcal{F}}$-measurable random variables $V$, we have that $\widetilde{\E}_x^{\theta}[V]=\widetilde{\E}[V\vert Y_{t_{k}}^{\theta}=x]$.

Under conditions {\bf(A1)}, {\bf(A2)} and {\bf(A4)}\textnormal{(a)}, for any $t>s$ the law of $Y_t^{\theta}$ conditioned on $Y_s^{\theta}=x$ admits a positive transition density $p^{\theta}(t-s,x,y)$, which is differentiable w.r.t. $\theta$. As a consequence of \cite[Proposition 4.1]{G01}, we have the following explicit expression for the logarithm derivative of the transition density w.r.t. $\theta$ in terms of a conditional expectation. 

\begin{proposition}\label{c3Gobet} Under conditions {\bf(A1)}, {\bf(A2)} and {\bf(A4)}\textnormal{(a)}-\textnormal{(c)}, for all $k \in \{0,...,n-1\}$, $\theta\in\Theta$, and $x, y\in\R^d$,
\begin{equation*} \begin{split}
\dfrac{\partial_{\theta}p^{\theta}}{p^{\theta}}\left(\Delta_n,x,y\right)=\dfrac{1}{\Delta_n}\widetilde{\E}_{x}^{\theta}\left[\delta\left( U^{\theta}(t_k,x)  \partial_{\theta} Y_{t_{k+1}}^{\theta}(t_k,x) \right)\Big\vert Y_{t_{k+1}}^{\theta}=y\right],
\end{split}
\end{equation*}
where  $U_{t}^{\theta}(t_k,x)=(D_tY_{t_{k+1}}^{\theta}(t_k,x))^{-1}$, $t \in [t_k, t_{k+1}]$.
\end{proposition}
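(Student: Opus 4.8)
The plan is to follow Gobet's argument \cite[Proposition 4.1]{G01}, now carried out on the flow \eqref{c3flow}: we express $\partial_\theta p^{\theta}$ through an integration by parts on the Wiener space generated by $W$ on $[t_k,t_{k+1}]$, the Poisson random measure $M$ entering merely as an additional source of noise that is never differentiated. Fix $k\in\{0,\ldots,n-1\}$, $\theta\in\Theta$, $x\in\R^d$, and abbreviate $Y_t:=Y_t^{\theta}(t_k,x)$. For any $g\in C_b^1(\R^d)$, the definition of the transition density together with the Markov/flow structure give
\begin{equation*}
\widetilde{\E}_x^{\theta}[g(Y_{t_{k+1}})]=\int_{\R^d}g(y)\,p^{\theta}(\Delta_n,x,y)\,dy.
\end{equation*}
I would differentiate both sides in $\theta$. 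On the right-hand side this produces $\int_{\R^d}g(y)\,\partial_\theta p^{\theta}(\Delta_n,x,y)\,dy$, which is legitimate since $p^{\theta}$ is differentiable in $\theta$; on the left-hand side, differentiation under the expectation yields $\widetilde{\E}_x^{\theta}[\nabla g(Y_{t_{k+1}})\,\partial_\theta Y_{t_{k+1}}]$, the interchange being justified by the $L^p$-bounds (uniform in $\theta$ on $\Theta$) for $\partial_\theta Y^{\theta}$, $\nabla_x Y^{\theta}$ and $(\nabla_x Y^{\theta})^{-1}$ obtained from \eqref{partialx}--\eqref{partialtheta} under {\bf(A1)}, {\bf(A2)} and {\bf(A4)}\textnormal{(a)}--\textnormal{(c)}.

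Next I would rewrite $\nabla g(Y_{t_{k+1}})$ as an averaged Malliavin derivative. By the chain rule $D_t(g(Y_{t_{k+1}}))=\nabla g(Y_{t_{k+1}})\,D_tY_{t_{k+1}}$ for a.e.\ $t\in[t_k,t_{k+1}]$, and on that interval $D_tY_{t_{k+1}}=\nabla_xY_{t_{k+1}}(\nabla_xY_t)^{-1}\sigma(Y_t)$ is invertible thanks to {\bf(A2)} and the invertibility of $\nabla_xY$, with inverse $U_t^{\theta}(t_k,x)$; hence (up to the transpose dictated by the convention for $D_tY$)
\begin{equation*}
\nabla g(Y_{t_{k+1}})=\frac{1}{\Delta_n}\int_{t_k}^{t_{k+1}}D_t\!\left(g(Y_{t_{k+1}})\right)U_t^{\theta}(t_k,x)\,dt,
\end{equation*}
so that
\begin{equation*}
\nabla g(Y_{t_{k+1}})\,\partial_\theta Y_{t_{k+1}}=\frac{1}{\Delta_n}\left\langle D\!\left(g(Y_{t_{k+1}})\right),\,U^{\theta}(t_k,x)\,\partial_\theta Y_{t_{k+1}}\right\rangle_{L^2([t_k,t_{k+1}])}.
\end{equation*}
Since $U^{\theta}(t_k,x)\,\partial_\theta Y_{t_{k+1}}$ is a product of elements of $\mathbb{D}^{1,2}$ with enough integrability, it belongs to $\textnormal{Dom}\,\delta$, and the duality relation $\widetilde{\E}_x^{\theta}[\langle DF,u\rangle]=\widetilde{\E}_x^{\theta}[F\,\delta(u)]$ with $F=g(Y_{t_{k+1}})$ gives
\begin{equation*}
\widetilde{\E}_x^{\theta}[\nabla g(Y_{t_{k+1}})\,\partial_\theta Y_{t_{k+1}}]=\frac{1}{\Delta_n}\widetilde{\E}_x^{\theta}\!\left[g(Y_{t_{k+1}})\,\delta\!\left(U^{\theta}(t_k,x)\,\partial_\theta Y_{t_{k+1}}\right)\right].
\end{equation*}

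Equating the two expressions for $\partial_\theta\widetilde{\E}_x^{\theta}[g(Y_{t_{k+1}})]$ and conditioning the Skorohod-integral term on $\{Y_{t_{k+1}}=y\}$, I obtain
\begin{equation*}
\int_{\R^d}g(y)\,\partial_\theta p^{\theta}(\Delta_n,x,y)\,dy=\frac{1}{\Delta_n}\int_{\R^d}g(y)\,\widetilde{\E}_x^{\theta}\!\left[\delta\!\left(U^{\theta}(t_k,x)\,\partial_\theta Y_{t_{k+1}}\right)\Big|Y_{t_{k+1}}=y\right]p^{\theta}(\Delta_n,x,y)\,dy.
\end{equation*}
As $g$ is arbitrary the integrands agree $dy$-a.e., and dividing by $p^{\theta}(\Delta_n,x,y)>0$ (then appealing to continuity in $y$, and to a density argument to lift the $C_b^1$ restriction on $g$) yields the claim. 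The main obstacle is the analytic bookkeeping rather than any single deep step: one must (i) justify differentiating $\theta\mapsto\widetilde{\E}_x^{\theta}[g(Y_{t_{k+1}})]$ under the expectation and check that $\partial_\theta p^{\theta}$ is well defined and locally integrable in $\theta$, and (ii) verify that $U^{\theta}(t_k,x)\,\partial_\theta Y_{t_{k+1}}\in\textnormal{Dom}\,\delta$, which requires controlling the Malliavin derivative of the inverse Jacobian $(\nabla_xY^{\theta})^{-1}$. Both reduce to $L^p$-moment estimates for the processes in \eqref{partialx}--\eqref{partialtheta} and their Malliavin derivatives, obtained from {\bf(A1)}, {\bf(A2)}, {\bf(A4)}\textnormal{(a)}--\textnormal{(c)} together with $\int_{\R_0^d}\zeta^2(z)\,\nu(dz)<\infty$; crucially the jump terms of \eqref{c3flow} enter only through these estimates and not through the Wiener-space Malliavin calculus itself.
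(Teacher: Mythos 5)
Your proposal is correct and follows essentially the same route as the paper's proof: differentiate $\widetilde{\E}_x^{\theta}[g(Y_{t_{k+1}}^{\theta}(t_k,x))]=\int g(y)\,p^{\theta}(\Delta_n,x,y)\,dy$ in $\theta$, invert $D_tY_{t_{k+1}}^{\theta}$ via the Malliavin chain rule (which is where {\bf(A2)} and the invertibility of $\nabla_xY^{\theta}$ enter), average over $[t_k,t_{k+1}]$ to introduce the factor $1/\Delta_n$, apply the Skorohod duality, and then condition on $Y_{t_{k+1}}^{\theta}=y$. The paper likewise invokes the $L^p$-bounds in \eqref{ytheta} to justify interchanging $\partial_\theta$ and $\widetilde{\E}$ via uniform integrability, and works with compactly supported $C^1$ test functions rather than your $C_b^1$ class, which is an inessential difference.
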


We have the following decomposition of the Skorohod integral appearing in the conditional expectation of Proposition \ref{c3Gobet}.
\begin{lemma}\label{c3delta} Under conditions {\bf(A1)}, {\bf(A2)} and {\bf(A4)}\textnormal{(a)}-\textnormal{(c)}, for all $k \in \{0,...,n-1\}$, $\theta\in \Theta$, and $x\in\R^d$,
\begin{equation*}\begin{split}
&\delta\left(U^{\theta}(t_k,x)\partial_{\theta}Y_{t_{k+1}}^{\theta}(t_k,x)\right)=\Delta_n(\partial_{\theta}
b(\theta,Y_{t_k}^{\theta}))^{\ast}(\sigma\sigma^{\ast})^{-1}(Y_{t_k}^{\theta})\left(Y_{t_{k+1}}^{\theta}-Y_{t_{k}}^{\theta}-b(\theta,Y_{t_k}^{\theta})\Delta_n\right)\\
&\qquad-R_{1}^{\theta,k}+R_{2}^{\theta,k}+R_{3}^{\theta,k}-R_4^{\theta,k}-R_5^{\theta,k}-R_6^{\theta,k},
\end{split}
\end{equation*}
where 
\begin{align*}
&R_{1}^{\theta,k}=\int_{t_k}^{t_{k+1}} \int_s^{t_{k+1}}\textnormal{tr} \left(D_s\left(((\nabla_xY_{u}^{\theta}(t_k,x))^{-1}\partial_{\theta}b(\theta,Y_{u}^{\theta}(t_k,x)))^{\ast} \right)  \sigma^{-1}(Y_{s}^{\theta}(t_k,x))  \nabla_x Y_{s}^{\theta}(t_k,x) \right) duds,\\
&R_{2}^{\theta,k}=\int_{t_k}^{t_{k+1}}((\nabla_xY_{s}^{\theta}(t_k,x))^{-1}\partial_{\theta}b(\theta,Y_{s}^{\theta}(t_k,x)))^{\ast} ds \\
&\qquad \qquad  \times \int_{t_k}^{t_{k+1}} \left( (\nabla_x Y_{s}^{\theta}(t_k,x))^{\ast} (\sigma^{-1}(Y_{s}^{\theta}(t_k,x)))^{\ast}- (\nabla_x Y_{t_k}^{\theta}(t_k,x))^{\ast} (\sigma^{-1}(Y_{t_k}^{\theta}(t_k,x)))^{\ast}\right)dW_s, \\
&R_{3}^{\theta,k}=\int_{t_k}^{t_{k+1}} \left(((\nabla_xY_{s}^{\theta}(t_k,x))^{-1}\partial_{\theta}b(\theta,Y_{s}^{\theta}(t_k,x)))^{\ast}-((\nabla_xY_{t_k}^{\theta}(t_k,x))^{-1}\partial_{\theta}b(\theta,Y_{t_k}^{\theta}(t_k,x)))^{\ast}\right) ds \\
&\qquad \qquad \times \int_{t_k}^{t_{k+1}}  (\nabla_x Y_{t_k}^{\theta}(t_k,x))^{\ast} (\sigma^{-1}(Y_{t_k}^{\theta}(t_k,x)))^{\ast}dW_s, \\
&R_4^{\theta,k}=\Delta_n(\partial_{\theta}
b(\theta,Y_{t_k}^{\theta}))^{\ast}(\sigma\sigma^{\ast})^{-1}(Y_{t_k}^{\theta})\int_{t_k}^{t_{k+1}}\left(b(\theta,Y_{s}^{\theta})-b(\theta,Y_{t_k}^{\theta})\right)ds,\\
&R_5^{\theta,k}=\Delta_n(\partial_{\theta}
b(\theta,Y_{t_k}^{\theta}))^{\ast}(\sigma\sigma^{\ast})^{-1}(Y_{t_k}^{\theta})\int_{t_k}^{t_{k+1}}\left(\sigma(Y_{s}^{\theta})-\sigma(Y_{t_k}^{\theta})\right)dW_s,\\
&R_6^{\theta,k}=\Delta_n(\partial_{\theta}
b(\theta,Y_{t_k}^{\theta}))^{\ast}(\sigma\sigma^{\ast})^{-1}(Y_{t_k}^{\theta})\int_{t_k}^{t_{k+1}}
\int_{\mathbb{R}_0^d}c(Y_{s-}^{\theta},z)\widetilde{M}(ds,dz).
\end{align*}
\end{lemma}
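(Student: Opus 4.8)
The plan is to turn the Skorohod integral into an ordinary (anticipating) bilinear expression plus a trace-type Malliavin-derivative correction, and then to expand each factor around its value frozen at the left endpoint $t_k$. First I would simplify the integrand. From the explicit formula for the Malliavin derivative recalled just before Proposition~\ref{c3Gobet}, $D_sY_{t_{k+1}}^{\theta}(t_k,x)=\nabla_xY_{t_{k+1}}^{\theta}(t_k,x)(\nabla_xY_{s}^{\theta}(t_k,x))^{-1}\sigma(Y_{s}^{\theta}(t_k,x))$ for $s\in[t_k,t_{k+1}]$, hence $U_s^{\theta}(t_k,x)=\sigma^{-1}(Y_{s}^{\theta}(t_k,x))\nabla_xY_{s}^{\theta}(t_k,x)(\nabla_xY_{t_{k+1}}^{\theta}(t_k,x))^{-1}$. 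Comparing the linear equations \eqref{partialx} and \eqref{partialtheta}, the variation-of-constants formula (cf. Kunita~\cite{K97}; since the perturbation $\partial_{\theta}b(\theta,Y_u^{\theta})\,du$ is a continuous finite-variation term, the jump part causes no difficulty) gives $\partial_{\theta}Y_{t_{k+1}}^{\theta}(t_k,x)=\nabla_xY_{t_{k+1}}^{\theta}(t_k,x)\int_{t_k}^{t_{k+1}}(\nabla_xY_{u}^{\theta}(t_k,x))^{-1}\partial_{\theta}b(\theta,Y_{u}^{\theta}(t_k,x))\,du$. Multiplying, the factors $\nabla_xY_{t_{k+1}}^{\theta}$ and $(\nabla_xY_{t_{k+1}}^{\theta})^{-1}$ cancel, so that, with $G:=\int_{t_k}^{t_{k+1}}(\nabla_xY_{u}^{\theta}(t_k,x))^{-1}\partial_{\theta}b(\theta,Y_{u}^{\theta}(t_k,x))\,du\in(\mathbb{D}^{1,2})^d$ (by the $\mathbb{D}^{1,2}$-regularity of $(\nabla_xY^{\theta})^{-1}$ and $\partial_{\theta}b(\theta,Y^{\theta})$ recalled in Section~3), the integrand $U^{\theta}(t_k,x)\partial_{\theta}Y_{t_{k+1}}^{\theta}(t_k,x)$ is the adapted matrix-valued process $s\mapsto\sigma^{-1}(Y_{s}^{\theta}(t_k,x))\nabla_xY_{s}^{\theta}(t_k,x)$ applied to the anticipating vector $G$.

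Next I would apply the anticipating integration-by-parts rule $\delta(Fv)=F\delta(v)-\int_{t_k}^{t_{k+1}}\langle D_sF,v_s\rangle\,ds$ componentwise over the $d$ coordinates of $W$, which is legitimate thanks to the $\mathbb{D}^{1,2}$-regularity and the moment bounds for $\nabla_xY^{\theta}$, $(\nabla_xY^{\theta})^{-1}$ and $\partial_{\theta}b(\theta,Y^{\theta})$ from Section~3. Since $D_sG=\int_s^{t_{k+1}}D_s\bigl((\nabla_xY_{u}^{\theta}(t_k,x))^{-1}\partial_{\theta}b(\theta,Y_{u}^{\theta}(t_k,x))\bigr)du$ (the Malliavin derivative of an $\mathcal{F}_u$-measurable quantity vanishes for $u<s$), the resulting correction term, once the sum over Brownian directions is read as a trace, is exactly $R_1^{\theta,k}$; thus
\begin{equation*}
\delta\bigl(U^{\theta}(t_k,x)\partial_{\theta}Y_{t_{k+1}}^{\theta}(t_k,x)\bigr)=PQ-R_1^{\theta,k},
\end{equation*}
with $P:=\int_{t_k}^{t_{k+1}}\bigl((\nabla_xY_{s}^{\theta}(t_k,x))^{-1}\partial_{\theta}b(\theta,Y_{s}^{\theta}(t_k,x))\bigr)^{\ast}ds$ and $Q:=\int_{t_k}^{t_{k+1}}(\nabla_xY_{s}^{\theta}(t_k,x))^{\ast}(\sigma^{-1}(Y_{s}^{\theta}(t_k,x)))^{\ast}dW_s$.

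It then remains to freeze the coefficients at $t_k$. Using $\nabla_xY_{t_k}^{\theta}(t_k,x)=\textup{I}_d$, set $P_0:=\Delta_n(\partial_{\theta}b(\theta,Y_{t_k}^{\theta}))^{\ast}$ and $Q_0:=(\sigma^{-1}(Y_{t_k}^{\theta}))^{\ast}(W_{t_{k+1}}-W_{t_k})$; bilinearity gives $PQ=P_0Q_0+P(Q-Q_0)+(P-P_0)Q_0$, and the last two summands are precisely $R_2^{\theta,k}$ and $R_3^{\theta,k}$. Finally, the SDE \eqref{c3eq1rajoute} on $[t_k,t_{k+1}]$ yields
\begin{equation*}
\begin{split}
\sigma(Y_{t_k}^{\theta})(W_{t_{k+1}}-W_{t_k})&=\bigl(Y_{t_{k+1}}^{\theta}-Y_{t_k}^{\theta}-b(\theta,Y_{t_k}^{\theta})\Delta_n\bigr)-\int_{t_k}^{t_{k+1}}\bigl(b(\theta,Y_{s}^{\theta})-b(\theta,Y_{t_k}^{\theta})\bigr)ds\\
&\quad-\int_{t_k}^{t_{k+1}}\bigl(\sigma(Y_{s}^{\theta})-\sigma(Y_{t_k}^{\theta})\bigr)dW_s-\int_{t_k}^{t_{k+1}}\int_{\R_0^d}c(Y_{s-}^{\theta},z)\widetilde{M}(ds,dz),
\end{split}
\end{equation*}
and inserting this into $P_0Q_0=\Delta_n(\partial_{\theta}b(\theta,Y_{t_k}^{\theta}))^{\ast}(\sigma\sigma^{\ast})^{-1}(Y_{t_k}^{\theta})\sigma(Y_{t_k}^{\theta})(W_{t_{k+1}}-W_{t_k})$ (using $(\sigma^{-1})^{\ast}=(\sigma\sigma^{\ast})^{-1}\sigma$) produces the main term together with $-R_4^{\theta,k}-R_5^{\theta,k}-R_6^{\theta,k}$. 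Collecting all the pieces gives the stated decomposition.

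I expect the main obstacle to be the second step: justifying rigorously that the integrand belongs to $\textnormal{Dom}\,\delta$ and that the product rule may be applied to the product of an adapted matrix process with the anticipating vector $G$ (this rests on the $\mathbb{D}^{1,2}$-regularity and the $L^p$-moment estimates for the Jacobian flow, its inverse and $\partial_{\theta}b(\theta,Y^{\theta})$ from Section~3), together with the bookkeeping needed to identify the correction, summed over the $d$ directions of $W$, with the precise trace expression $R_1^{\theta,k}$ — keeping track of all the transposes and of the double time-integral $\int_{t_k}^{t_{k+1}}\int_s^{t_{k+1}}$ that arises from $D_sG$. The remaining steps (the bilinear expansion and the substitution via the SDE) are purely algebraic and routine.
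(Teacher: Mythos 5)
Your proposal is correct and follows essentially the same route as the paper: the Skorohod product rule \cite[(1.48)]{N} applied to $\sigma^{-1}(Y_\cdot)\nabla_xY_\cdot$ times the anticipating vector $(\nabla_xY_{t_{k+1}}^{\theta})^{-1}\partial_\theta Y_{t_{k+1}}^{\theta}$, the key identity $(\nabla_xY_{t_{k+1}}^{\theta})^{-1}\partial_\theta Y_{t_{k+1}}^{\theta}=\int_{t_k}^{t_{k+1}}(\nabla_xY_s^{\theta})^{-1}\partial_\theta b(\theta,Y_s^{\theta})\,ds$, the freezing at $t_k$ giving $R_2,R_3$, and the SDE substitution giving $R_4,R_5,R_6$. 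The only cosmetic difference is that you derive the key identity by variation of constants, whereas the paper obtains it by writing out the It\^o SDE for $(\nabla_xY^{\theta})^{-1}$ and then applying It\^o once more to the product $(\nabla_xY^{\theta})^{-1}\partial_\theta Y^{\theta}$ — two equivalent derivations of the same formula.
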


We will use the following estimates for the solution to \eqref{c3flow}.
\begin{lemma}\label{c3moment3} Assume conditions {\bf(A1)} and  {\bf(A6)}. 
\begin{itemize}
\item[\textnormal{(i)}] For any $p\geq 1$ and $\theta\in\Theta$, there exists a constant $C_p>0$ such that for all $k \in \{0,...,n-1\}$ and $t\in[t_k,t_{k+1}]$, 
\begin{equation*}
\E\left[\left\vert Y_t^{\theta}(t_k,x)-Y_{t_k}^{\theta}(t_k,x)\right\vert^p\big\vert Y_{t_{k}}^{\theta}(t_k,x)=x\right] \leq C_p\left\vert t-t_k\right\vert^{\frac{p}{2}\wedge 1}\left(1+\vert x\vert^p\right).
\end{equation*}

\item[\textnormal{(ii)}] For any function $g$ defined on $\Theta\times\R^d$ with polynomial growth in $x$ uniformly in $\theta\in\Theta$, there exist constants $C, q>0$ such that for all $k \in \{0,...,n-1\}$ and $t\in[t_k,t_{k+1}]$, 
$$
\E\left[\left\vert g(\theta,Y_t^{\theta}(t_k,x))\right\vert\big\vert Y_{t_{k}}^{\theta}(t_k,x)=x\right]\leq C\left(1+\vert x\vert^q\right).
$$
Moreover, all these statements remain valid for $X^{\theta}$.
\end{itemize}
\end{lemma}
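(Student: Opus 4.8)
The plan is to treat parts (i) and (ii) in turn, relying on standard moment estimates for SDEs with jumps combined with the stationarity-type bounds provided by condition {\bf(A5)} and the moment condition {\bf(A6)}. For part (i), fix $p\geq 1$, $k\in\{0,\ldots,n-1\}$, and $t\in[t_k,t_{k+1}]$, and condition on $Y_{t_k}^{\theta}(t_k,x)=x$. Writing $Y_t^{\theta}(t_k,x)-x$ as the sum of the drift integral $\int_{t_k}^t b(\theta,Y_s^{\theta}(t_k,x))\,ds$, the Brownian stochastic integral $\int_{t_k}^t\sigma(Y_s^{\theta}(t_k,x))\,dW_s$, and the compensated Poisson integral $\int_{t_k}^t\int_{\R_0^d}c(Y_{s-}^{\theta}(t_k,x),z)\,\widetilde{M}(ds,dz)$, I would bound each term separately. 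The drift term is handled by H\"older's inequality, giving a factor $|t-t_k|^{p}\le |t-t_k|^{p/2\wedge 1}$ for $\Delta_n\le 1$ together with $\E[(1+|Y_s^\theta|)^p\,|\,\cdot\,]$, which is itself controlled by a standard Gronwall argument using the linear growth in {\bf(A1)}. For the Brownian term one uses the Burkholder--Davis--Gundy inequality and the linear growth of $\sigma$, yielding a factor $|t-t_k|^{p/2}$. For the jump term one invokes the Kunita-type inequality for compensated Poisson integrals (see e.g. the moment estimates behind {\bf(A1)}), using $\int_{\R_0^d}\zeta^2(z)\,\nu(dz)<\infty$ and, for $p>2$, $\int_{\R_0^d}\zeta^p(z)\,\nu(dz)<\infty$ — the latter being guaranteed by {\bf(A6)} since $\zeta$ has polynomial growth — which produces a factor $|t-t_k|^{p/2\wedge 1}$. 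Combining the three bounds and using $\Delta_n\le 1$ so that $|t-t_k|^{p}\le|t-t_k|^{p/2\wedge1}$ gives the claimed estimate, after a Gronwall step to close the a priori $\sup_{s\in[t_k,t]}\E[|Y_s^\theta(t_k,x)|^p\,|\,\cdot]\le C(1+|x|^p)$ bound used above.

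For part (ii), let $g$ have polynomial growth in $x$ uniformly in $\theta$, say $|g(\theta,y)|\le C(1+|y|^{q_0})$. Then
\begin{equation*}
\E\!\left[|g(\theta,Y_t^{\theta}(t_k,x))|\,\big|\,Y_{t_k}^{\theta}(t_k,x)=x\right]\le C\left(1+\E\!\left[|Y_t^{\theta}(t_k,x)|^{q_0}\,\big|\,Y_{t_k}^{\theta}(t_k,x)=x\right]\right),
\end{equation*}
and by the triangle inequality $|Y_t^\theta(t_k,x)|^{q_0}\le 2^{q_0-1}(|Y_t^\theta(t_k,x)-x|^{q_0}+|x|^{q_0})$, so part (i) with $p=q_0\vee 1$ and $|t-t_k|\le\Delta_n\le 1$ immediately gives the bound $C(1+|x|^{q})$ for a suitable $q$. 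The final sentence, that the statements remain valid for $X^\theta$, follows because $X^\theta$ restricted to $[t_k,t_{k+1}]$ and started from $X_{t_k}^\theta=x$ solves the same SDE \eqref{c3eq1} with the same coefficients as the flow $Y^\theta(t_k,x)$; hence the identical arguments apply verbatim with $(W,M)$ replaced by $(B,N)$.

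The main obstacle I anticipate is the jump term in part (i): one must be careful to split into the regime $1\le p\le 2$, where a direct $L^2$-isometry plus Jensen gives the $|t-t_k|^{p/2\wedge1}=|t-t_k|^{p/2}$ rate, and the regime $p>2$, where the Kunita inequality produces two contributions — one of order $|t-t_k|^{p/2}$ from the "Brownian-like" part of the BDG bound and one of order $|t-t_k|$ from the genuinely jump part — so that the exponent $\tfrac{p}{2}\wedge 1$ is exactly $1$ for $p>2$, and checking that the constants can be taken uniform in $k$ and in the conditioning value $x$ only through the stated factor $1+|x|^p$. This uniformity is where {\bf(A1)} (linear growth) and {\bf(A6)} (all polynomial moments of $\nu$) are used; the Gronwall constant depends only on $L$, $\int\zeta^2\,d\nu$, bounds on $\Theta$, and $\Delta_n\le 1$, hence is independent of $k$ and $n$ as required.
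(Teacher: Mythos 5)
Your proof is correct and follows the standard route; the paper itself does not include a proof of this lemma in the Appendix (it is treated as a standard moment estimate for SDEs with jumps), so there is no paper argument to compare against. Your decomposition into drift, Brownian, and compensated jump parts, with H\"older for the drift, Burkholder--Davis--Gundy for the Brownian integral, the Kunita-type inequality (split at $p=2$: $L^2$-isometry plus Jensen for $1\le p\le 2$, the two-term bound for $p>2$) for the jump integral, followed by a Gronwall closure of the a priori moment bound $\sup_{s\in[t_k,t]}\E[|Y_s^\theta(t_k,x)|^p\,|\,\cdot]\le C(1+|x|^p)$, is precisely how one obtains the exponent $\frac{p}{2}\wedge 1$ and the uniformity in $k,n$ via $\Delta_n\le 1$. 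Part (ii) then reduces to part (i) as you describe, and the transfer to $X^\theta$ is immediate since it solves the same SDE with the same coefficients driven by the independent copy $(B,N)$ of $(W,M)$. One small slip: your opening sentence invokes condition {\bf(A5)}, but that hypothesis is not assumed in the lemma and, as you can see, your actual argument never uses it --- only {\bf(A1)} (Lipschitz/linear growth and $\int\zeta^2\,d\nu<\infty$) and {\bf(A6)} (to guarantee $\int\zeta^p\,d\nu<\infty$ for all $p$) are needed.
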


Assuming conditions {\bf(A1)}, {\bf(A2)}, {\bf(A4)}\textnormal{(a)}-\textnormal{(c)} and {\bf(A6)}, and using Gronwall's inequality, one can easily check that for any $\theta\in\Theta$ and $p\geq 2$, there exist constants $C_p, q>0$ such that for all $k \in \{0,...,n-1\}$ and $t\in[t_k,t_{k+1}]$, 
\begin{align} \nonumber
&\E\left[\left\vert \nabla_xY_t^{\theta}(t_k,x)\right\vert^p+\left\vert (\nabla_xY_t^{\theta}(t_k,x))^{-1}\right\vert^p\Big\vert Y_{t_{k}}^{\theta}(t_k,x)=x\right]\\ \nonumber
&\qquad+\sup_{s\in [t_k,t_{k+1}]}\E\left[\left\vert D_sY_t^{\theta}(t_k,x) \right\vert^p\Big\vert Y_{t_{k}}^{\theta}(t_k,x)=x\right]\leq C_p, \quad\text{and}\\ \label{ytheta}
&\E\left[\left\vert \partial_{\theta}Y_t^{\theta}(t_k,x)\right\vert^p\Big\vert Y_{t_{k}}^{\theta}(t_k,x)=x\right]\\ \nonumber
&\qquad+\sup_{s\in [t_k,t_{k+1}]}\E\left[\left\vert D_s\left(\nabla_xY_t^{\theta}(t_k,x)\right)\right \vert^p\Big\vert Y_{t_{k}}^{\theta}(t_k,x)=x\right]\leq C_p\left(1+\vert x\right\vert^q ),
\end{align}
where the constant $C_p$ is uniform in $\theta$.
As a consequence, we have the following estimates, which follow easily from \eqref{c3e0}, Lemma \ref{c3moment3} and properties of the moments of the Brownian motion.
\begin{lemma} \label{c3estimate}
Under conditions {\bf(A1)}, {\bf(A2)}, {\bf(A4)}\textnormal{(a)}-\textnormal{(e)} and {\bf(A6)}, for any $\theta\in \Theta$ and $p\geq 2$, there exist constants $C_p, q>0$ such that for all $k \in \{0,...,n-1\}$, 
\begin{align}
&\E\left[-R_{1}^{\theta,k}+R_{2}^{\theta,k}+R_{3}^{\theta,k}\big\vert Y_{t_{k}}^{\theta}(t_k,x)=x\right]=0,\label{c3es1}\\
&\E\left[\left\vert -R_{1}^{\theta,k}+R_{2}^{\theta,k}+R_{3}^{\theta,k}\right\vert^p\big\vert Y_{t_{k}}^{\theta}(t_k,x)=x\right]\leq C_p\Delta_n^{\frac{3p+1}{2}}\left(1+\vert x\vert^q\right).\label{c3es2}
\end{align}
\end{lemma}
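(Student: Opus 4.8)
The plan is to treat the two statements separately, since \eqref{c3es1} follows from the explicit structure of $R_1^{\theta,k}$, $R_2^{\theta,k}$, $R_3^{\theta,k}$ as iterated integrals over $[t_k,t_{k+1}]$ of the underlying flow, while \eqref{c3es2} is a matter of carefully bookkeeping the time powers generated by each stochastic/Lebesgue integral together with the uniform conditional moment bounds \eqref{ytheta} and Lemma \ref{c3moment3}. For \eqref{c3es1}, I would first recall that $R_2^{\theta,k}$ is a product of an $\widehat{\mathcal{F}}$-measurable (in fact $\{\mathcal{F}_t^3\}$-adapted) Lebesgue integral over $[t_k,t_{k+1}]$ with a genuine It\^o integral $\int_{t_k}^{t_{k+1}}(\cdots)dW_s$; conditioning on $Y_{t_k}^{\theta}=x$ and using that the Lebesgue integrand and the It\^o integrand are adapted, a Fubini-type argument (or writing the product as a sum of two iterated It\^o integrals via integration by parts) shows the conditional expectation of $R_2^{\theta,k}$ vanishes. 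The same argument applies to $R_3^{\theta,k}$, whose second factor is again a pure It\^o integral with adapted integrand. For $R_1^{\theta,k}$, I would use the duality relationship: by definition $R_1^{\theta,k}$ arises from the Skorohod-integral correction term $\int \mathrm{tr}(D_s(\cdots))\,duds$ and has conditional expectation zero because it is (a piece of) a Skorohod integral, and Skorohod integrals have zero expectation; alternatively one checks directly that $\E[\int_{t_k}^{t_{k+1}}\!\int_s^{t_{k+1}}\mathrm{tr}(D_s(\Phi_u^{\ast})\sigma^{-1}\nabla_x Y_s)\,du\,ds\mid Y_{t_k}^\theta=x]=0$ by the Clark-Ocone/integration-by-parts formula applied on $[t_k,t_{k+1}]$. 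This gives the cancellation \eqref{c3es1}.

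For \eqref{c3es2}, I would estimate each of the three terms separately in $L^p$ conditional on $Y_{t_k}^\theta=x$ and then combine via the triangle inequality. For $R_1^{\theta,k}$: it is a double time integral over a region of ``area'' $\Delta_n^2/2$, so by Jensen (or H\"older) in the $du\,ds$ integration one gains a factor $\Delta_n^{2p}$ times $\int_{t_k}^{t_{k+1}}\!\int_s^{t_{k+1}}\E[|\mathrm{tr}(\cdots)|^p\mid\cdots]\,du\,ds$; the integrand is bounded by a constant times $(1+|x|^q)$ using \eqref{ytheta} (which controls $\nabla_x Y$, $(\nabla_x Y)^{-1}$, $D_s(\nabla_x Y)$, $\partial_\theta Y$ and their products) together with {\bf(A4)} on the derivatives of $b$, giving the bound $C_p\Delta_n^{2p}\cdot\Delta_n^2\cdot(1+|x|^q)\leq C_p\Delta_n^{2p}(1+|x|^q)$, which is $\leq C_p\Delta_n^{(3p+1)/2}(1+|x|^q)$ for $\Delta_n\leq 1$ once $2p\geq (3p+1)/2$, i.e. $p\geq 1$. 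Wait — I should be more careful: the exponent claimed is $(3p+1)/2$, so I need to extract enough powers; the sharp count comes from $R_2^{\theta,k}$ and $R_3^{\theta,k}$. For $R_3^{\theta,k}$: the first factor is $\int_{t_k}^{t_{k+1}}(\Phi_s-\Phi_{t_k})\,ds$ where, by \eqref{c3es2}-type increment bounds from Lemma \ref{c3moment3}(i) and the Lipschitz/growth hypotheses {\bf(A4)}(a),(e), $\E[|\Phi_s-\Phi_{t_k}|^p\mid\cdots]\leq C_p|s-t_k|^{p/2}(1+|x|^q)$; integrating $ds$ over $[t_k,t_{k+1}]$ and applying Jensen costs $\Delta_n^{p-1}\cdot\int_{t_k}^{t_{k+1}}|s-t_k|^{p/2}ds\asymp \Delta_n^{p-1}\cdot\Delta_n^{p/2+1}=\Delta_n^{3p/2}$; the second factor is $\int_{t_k}^{t_{k+1}}(\text{bounded adapted})\,dW_s$, whose $L^p$ norm is $O(\Delta_n^{1/2})$ by Burkholder–Davis–Gundy, hence contributes $\Delta_n^{p/2}$; applying Cauchy–Schwarz to split the product (after doubling exponents, which is harmless) yields $C_p\Delta_n^{(3p+p+1)/2}=C_p\Delta_n^{(3p+1)/2}$ after a further careful bookkeeping — here I would actually estimate $\E[|R_3|^p]\le (\E[|\text{factor 1}|^{2p}])^{1/2}(\E[|\text{factor 2}|^{2p}])^{1/2}\le C_p\Delta_n^{3p/2}\cdot\Delta_n^{p/2}\cdot(1+|x|^q)=C_p\Delta_n^{2p}(1+|x|^q)$, which again beats $\Delta_n^{(3p+1)/2}$. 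The tight term is $R_2^{\theta,k}$: its first factor $\int_{t_k}^{t_{k+1}}\Phi_s\,ds$ has size $O(\Delta_n)$ in every $L^p$ (using \eqref{ytheta}), while the second factor $\int_{t_k}^{t_{k+1}}(\Psi_s-\Psi_{t_k})\,dW_s$ has, by BDG and the increment bound $\E[|\Psi_s-\Psi_{t_k}|^p]\leq C_p|s-t_k|^{p/2}(1+|x|^q)$, an $L^p$ norm of order $(\int_{t_k}^{t_{k+1}}|s-t_k|\,ds)^{1/2}\cdot$const $=\Delta_n^{1}$, wait that gives the second factor $\asymp \Delta_n$ too — so the product is $O(\Delta_n^2)$, i.e. $\E[|R_2|^p]\le C_p\Delta_n^{2p}(1+|x|^q)$. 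In all three cases one therefore gets the bound $C_p\Delta_n^{2p}(1+|x|^q)$, and since $\Delta_n\leq 1$ and $2p\geq \frac{3p+1}{2}\iff p\geq 1$, this implies the asserted estimate $C_p\Delta_n^{(3p+1)/2}(1+|x|^q)$. I would present it with the uniform constant $C_p$ by noting all moment bounds above are uniform in $\theta\in\Theta$ and $k$.

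The main obstacle is the increment estimates: to get the correct exponents one must show that the ``difference'' factors in $R_2^{\theta,k}$ and $R_3^{\theta,k}$, namely $s\mapsto(\nabla_x Y_s^\theta)^\ast(\sigma^{-1}(Y_s^\theta))^\ast-(\nabla_x Y_{t_k}^\theta)^\ast(\sigma^{-1}(Y_{t_k}^\theta))^\ast$ and $s\mapsto(\nabla_x Y_s^\theta)^{-1}\partial_\theta b(\theta,Y_s^\theta)-(\nabla_x Y_{t_k}^\theta)^{-1}\partial_\theta b(\theta,Y_{t_k}^\theta)$, have conditional $L^p$-increments of order $|s-t_k|^{1/2}(1+|x|^q)$. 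This requires (i) $L^p$-increment bounds for $\nabla_x Y^\theta$ and $(\nabla_x Y^\theta)^{-1}$ along $[t_k,t_{k+1}]$, obtained from their linear SDEs \eqref{partialx} exactly as in Lemma \ref{c3moment3}(i) via BDG and the bounds {\bf(A4)}(a); (ii) Lipschitz-in-$x$ control of $\sigma^{-1}$ (which follows from {\bf(A1)}, {\bf(A2)}), and of $y\mapsto M^{-1}\partial_\theta b(\theta,y)$ combining {\bf(A4)}(a),(e) with the algebraic identity $A^{-1}-B^{-1}=A^{-1}(B-A)B^{-1}$ and the moment bounds on $(\nabla_x Y)^{-1}$; and (iii) combining these with Lemma \ref{c3moment3}(i) applied to $Y_s^\theta-Y_{t_k}^\theta$. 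Everything else — Jensen in the time integrals, BDG for the It\^o integrals, Cauchy–Schwarz to decouple the two factors, and absorbing $\Delta_n^{2p}\leq\Delta_n^{(3p+1)/2}$ — is routine. I would therefore organize the write-up as: first establish the increment lemma for $\nabla_x Y^\theta,(\nabla_x Y^\theta)^{-1}$; then treat $R_1,R_2,R_3$ in turn; then conclude.
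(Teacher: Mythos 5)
Your overall architecture is the same as the paper's: for \eqref{c3es1}, use that $-R_1^{\theta,k}+R_2^{\theta,k}+R_3^{\theta,k}$ is, by \eqref{c3e0}, the difference between a Skorohod integral and a centered Brownian increment (both of which have zero conditional mean); for \eqref{c3es2}, treat $R_1^{\theta,k}$, $R_2^{\theta,k}$, $R_3^{\theta,k}$ separately via Jensen, BDG and Cauchy--Schwarz, using \eqref{ytheta} and Lemma \ref{c3moment3}. So far this matches what the paper does.

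However, there is a concrete error in the increment estimates. You claim that the ``difference'' factors satisfy $\E\bigl[\vert\Phi_s-\Phi_{t_k}\vert^p\mid\cdots\bigr]\leq C_p\vert s-t_k\vert^{p/2}(1+\vert x\vert^q)$, and you build the arithmetic for $R_2^{\theta,k}$ and $R_3^{\theta,k}$ on this, arriving at the intermediate bound $\E[\vert R_i^{\theta,k}\vert^p\mid\cdots]\leq C_p\Delta_n^{2p}(1+\vert x\vert^q)$, which you then observe dominates the claimed $\Delta_n^{(3p+1)/2}$. The increment bound $\vert s-t_k\vert^{p/2}$ is the Brownian (jump-free) rate and is \emph{false} here. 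Lemma \ref{c3moment3}(i) states explicitly
$\E\bigl[\vert Y_t^{\theta}-Y_{t_k}^{\theta}\vert^p\mid\cdots\bigr]\leq C_p\vert t-t_k\vert^{\frac p2\wedge 1}(1+\vert x\vert^p)$,
and for $p\geq 2$ the exponent is $\frac p2\wedge 1=1$. This is sharp for a compound Poisson jump component: the probability of a jump on $[t_k,s]$ is of order $\vert s-t_k\vert$, and conditioned on a jump the increment is of order one, so $\E[\vert Y_s-Y_{t_k}\vert^p]\gtrsim\vert s-t_k\vert$ for every $p$. The same $\vert s-t_k\vert^{\frac p2\wedge1}$ rate then propagates to $\nabla_xY^\theta$, $(\nabla_xY^\theta)^{-1}$, $\sigma^{-1}(Y^\theta)$, etc.\ because their SDEs contain the same Poisson random measure. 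Consequently your intermediate bound $\Delta_n^{2p}$ for $R_2^{\theta,k}$ and $R_3^{\theta,k}$ is wrong (too strong) for every $p\geq2$; the correct count is precisely what makes the exponent $\frac{3p+1}{2}$ appear and not $2p$.

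To see this concretely for $R_3^{\theta,k}$: the second factor is a genuine It\^o integral with constant (at time $t_k$) integrand, so $\E[\vert\text{factor }2\vert^{2p}]\asymp\Delta_n^{p}$. For the first factor, $\E[\vert\Phi_s-\Phi_{t_k}\vert^{2p}]\leq C\vert s-t_k\vert(1+\vert x\vert^q)$, hence by Jensen $\E[\vert\int_{t_k}^{t_{k+1}}(\Phi_s-\Phi_{t_k})\,ds\vert^{2p}]\leq C\Delta_n^{2p-1}\int_{t_k}^{t_{k+1}}\vert s-t_k\vert\,ds\,(1+\vert x\vert^q)\asymp\Delta_n^{2p+1}(1+\vert x\vert^q)$. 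Cauchy--Schwarz then gives $\E[\vert R_3^{\theta,k}\vert^p]\leq(\Delta_n^{2p+1})^{1/2}(\Delta_n^{p})^{1/2}(1+\vert x\vert^q)=\Delta_n^{(3p+1)/2}(1+\vert x\vert^q)$, exactly the bound in \eqref{c3es2}. The same count for $R_2^{\theta,k}$ gives the same exponent. Only $R_1^{\theta,k}$, whose integrand involves no increments, yields the better $\Delta_n^{2p}$. So the conclusion of the lemma is right and your method is right, but the increment lemma you invoke is not: you must use the $\vert s-t_k\vert^{\frac p2\wedge 1}$ bound from Lemma \ref{c3moment3}(i), not the Gaussian $\vert s-t_k\vert^{p/2}$ rate, and this is precisely what produces the stated exponent.
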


We next recall Girsanov's theorem on each interval $[t_k, t_{k+1}]$. 
\begin{lemma}\label{c3Girsanov} Assume conditions {\bf(A1)}-{\bf(A2)}. For all $\theta, \theta_1\in\Theta$, and $k \in \{0,...,n-1\}$, define the measure
\begin{equation*} \begin{split}
\widehat{Q}_k^{\theta_1,\theta}(A)=\widehat{\E}\left[{\bf 1}_{A}e^{-\int_{t_k}^{t_{k+1}}\left(b(\theta,X_t)-b(\theta_1,X_t)\right)^{\ast}\sigma^{-1}(X_t)dB_t+\frac{1}{2}\int_{t_k}^{t_{k+1}}\left\vert\left(b(\theta,X_t)-b(\theta_1,X_t)\right)^{\ast}\sigma^{-1}(X_t)\right\vert^2dt}\right],
\end{split}
\end{equation*}
for all $A\in \widehat{\mathcal{F}}$. Then $\widehat{Q}_k^{\theta_1,\theta}$ is a probability measure and under $\widehat{Q}_k^{\theta_1,\theta}$, the process $(B_t^{\widehat{Q}_k^{\theta_1,\theta}}=B_t+\int_{t_k}^{t}\sigma^{-1}(X_s)(b(\theta,X_s)-b(\theta_1,X_s))ds, t \in [t_k, t_{k+1}])$ is a Brownian motion.
\end{lemma}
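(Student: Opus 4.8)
\emph{Proof strategy.} The plan is to derive the lemma as a direct application of Girsanov's theorem for the Brownian motion $B$; the only point requiring argument is that the exponential weight in the definition of $\widehat{Q}_k^{\theta_1,\theta}$ is a true probability density, i.e.\ that it has $\widehat{\P}$-expectation equal to $1$. Set $\psi_s:=\sigma^{-1}(X_s)\big(b(\theta,X_s)-b(\theta_1,X_s)\big)$ for $s\in[t_k,t_{k+1}]$ and let $\mathcal{E}_t=\exp\big(-\int_{t_k}^t\psi_s^{\ast}\,dB_s-\frac{1}{2}\int_{t_k}^t|\psi_s|^2\,ds\big)$, $t\in[t_k,t_{k+1}]$, be the associated Dol\'eans--Dade exponential, so that the weight in question is $\mathcal{E}_{t_{k+1}}$. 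Since $\psi$ is $\{\widehat{\mathcal{F}}_t\}$-adapted and a.s.\ locally square integrable, $\mathcal{E}$ is a nonnegative continuous local martingale, hence a supermartingale, so $\widehat{Q}_k^{\theta_1,\theta}$ is automatically a sub-probability measure; it is a probability measure precisely when $\widehat{\E}[\mathcal{E}_{t_{k+1}}]=1$, equivalently when $\mathcal{E}$ is a genuine martingale on $[t_k,t_{k+1}]$. Once this is known, Girsanov's theorem immediately yields that $B_t^{\widehat{Q}_k^{\theta_1,\theta}}=B_t+\int_{t_k}^t\psi_s\,ds$ is a $\widehat{Q}_k^{\theta_1,\theta}$-Brownian motion on $[t_k,t_{k+1}]$.

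It therefore remains to establish that $\widehat{\E}[\mathcal{E}_{t_{k+1}}]=1$. First, by the uniform ellipticity {\bf(A2)} the matrix $\sigma^{-1}$ is bounded, and by the linear growth of $b$ in {\bf(A1)} (with constants taken uniform over the compact $\Theta$) one has $|b(\theta,x)-b(\theta_1,x)|\le C(1+|x|)$; hence $|\psi_s|\le C\big(1+\sup_{u\in[t_k,s]}|X_u|\big)$, a Bene\v{s}-type linear growth bound on the Girsanov integrand. Note that the plain Novikov criterion is not available here, since it would require a finite exponential moment of $\sup_{u\in[t_k,t_{k+1}]}|X_u|^2$, which may fail in general because under {\bf(A6)} the L\'evy measure has only finite polynomial moments and the jumps of $X$ can be heavy-tailed. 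The structural fact to exploit is that the change of measure perturbs $B$ only and leaves the Poisson random measure $N$ untouched, so the argument should be carried out conditionally on $\mathcal{F}^2$, the canonical space carrying $N$, on which the jump structure of $X$ is frozen.

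Conditionally on $\mathcal{F}^2$: since $\lambda<\infty$, $N$ has a.s.\ only finitely many atoms in $[t_k,t_{k+1}]$, at the (now deterministic) times $t_k=\tau_0<\tau_1<\cdots<\tau_m<\tau_{m+1}=t_{k+1}$; on each $[\tau_j,\tau_{j+1})$ the process $X$ coincides with a diffusion driven by $B$ with coefficients $b(\theta_0,\cdot),\sigma(\cdot)$ of linear growth, the jump at $\tau_j$ merely resetting its initial value, and $\mathcal{E}$ is continuous (the jump times being Lebesgue-null) and factorizes as $\mathcal{E}_{t_{k+1}}=\prod_{j=0}^{m}\mathcal{E}_{\tau_{j+1}}/\mathcal{E}_{\tau_j}$. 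On each inter-jump interval the displayed bound is a genuine Bene\v{s} condition for the diffusion in force there, so — using that $B$ remains a Brownian motion given $\mathcal{F}^2$ by independence of $B$ and $N$ — the conditional expectation of each factor $\mathcal{E}_{\tau_{j+1}}/\mathcal{E}_{\tau_j}$ given $\widehat{\mathcal{F}}_{\tau_j}$ and $\mathcal{F}^2$ equals $1$; peeling off the factors successively from $j=m$ down to $j=0$ gives $\widehat{\E}[\mathcal{E}_{t_{k+1}}\mid\mathcal{F}^2]=1$ a.s., whence $\widehat{\E}[\mathcal{E}_{t_{k+1}}]=1$. The main obstacle is precisely this true-martingale property: because the jumps of $X$ may lack exponential moments, neither Novikov's nor Kazamaki's criterion applies directly, and one is forced to use that the Girsanov weight depends on the continuous part of $X$ only, reducing the question — via conditioning on the jump times and sizes — to finitely many diffusion pieces with linear growth, where the classical Bene\v{s} argument closes the proof. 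Alternatively, one may observe that under $\widehat{Q}_k^{\theta_1,\theta}$ the process $X$ would solve an SDE of the form \eqref{c3eq1} whose drift is again of linear growth, hence non-exploding by {\bf(A1)}, and deduce the martingale property from the principle that uniqueness for the perturbed equation precludes any loss of mass.
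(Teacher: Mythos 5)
The paper does not actually prove this lemma: it introduces the statement with the phrase ``We next recall Girsanov's theorem'' and the Appendix contains no corresponding argument, so there is no authors' proof to compare against. Your proposal is correct and supplies precisely the verification that the paper elides, namely that the exponential weight $\mathcal{E}_{t_{k+1}}$ has $\widehat{\P}$-expectation one. You rightly flag that Novikov's criterion is unavailable here: under {\bf(A6)} the jumps of $X$ have only polynomial moments, so $\widehat{\E}\bigl[\exp\bigl(\tfrac12\int_{t_k}^{t_{k+1}}|\psi_s|^2\,ds\bigr)\bigr]$ may well be infinite, and one cannot simply invoke the textbook sufficient condition. Both of your resolutions are sound. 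In the conditioning argument, freezing $\mathcal{F}^2$ reduces the question to finitely many inter-jump intervals on which $X$ is a genuine diffusion; note that its drift there is the compensated drift $b(\theta,\cdot)-\int_{\R_0^d}c(\cdot,z)\nu(dz)$, not $b(\theta_0,\cdot)$ as you write (a harmless slip --- it still has linear growth by {\bf(A1)} and {\bf(A6)}), and then a Gronwall bound of the form $\sup_{u\le s}|X_u|\le C(1+|X_{\tau_j}|+\sup_{u\le s}|B_u-B_{\tau_j}|)$ puts you squarely in the scope of the Bene\v{s} criterion on each segment, with the tower property closing the argument. The alternative you sketch in your last sentence --- localize at $\tau_n=\inf\{t\ge t_k:|X_t|\ge n\}$, use that the stopped exponential is a true martingale, and let $n\to\infty$ using that the $\widehat{Q}_k^{\theta_1,\theta}$-dynamics has drift $b(\theta_1,\cdot)$ of linear growth and hence does not explode, so no mass is lost --- is cleaner, handles the jump component automatically because the measure change only perturbs the Brownian term, and is presumably the unstated justification behind the paper's word ``recall.'' Either route is a valid and genuinely useful complement to the paper's bare citation.
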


\begin{lemma}\label{c3lemma1} Assume conditions {\bf(A1)}, {\bf(A2)}, {\bf(A4)}\textnormal{(b)} and {\bf(A6)}. Let $\theta, \theta_1\in\Theta$ such that $\vert \theta-\theta_1\vert\leq \frac{C}{\sqrt{n\Delta_n}}$, for some constant $C>0$. Then there exist constants $C, q_0>0$ such that for any random variable $V$, $p>1$, and $k \in \{0,...,n-1\}$,
\begin{equation*}\begin{split}
\left\vert\E_{\widehat{Q}_k^{\theta_1,\theta}}\left[V\left(\dfrac{d\widehat{\P}}{d \widehat{Q}_k^{\theta_1,\theta}}-1\right)\Big\vert X_{t_k}^{\theta}\right]\right\vert\leq \dfrac{C}{\sqrt{n}}\left(1+\vert X_{t_k}^{\theta}\vert^{q_0}\right)\int_0^1\left(\E_{\widehat{\P}^{\alpha}}\left[\vert V\vert^p\Big\vert X_{t_k}^{\theta}\right]\right)^{\frac{1}{p}}d\alpha,
\end{split}
\end{equation*}
where $\E_{\widehat{\P}^{\alpha}}$, $\alpha\in[0,1]$, denotes the expectation under the probability measure $\widehat{\P}^{\alpha}$ defined as
\begin{equation*} \begin{split}
\dfrac{d\widehat{\P}^{\alpha}}{d\widehat{Q}_k^{\theta_1,\theta}}:=e^{\alpha\int_{t_k}^{t_{k+1}}\left(b(\theta,X_t)-b(\theta_1,X_t)\right)^{\ast}\sigma^{-1}(X_t)dB_t-\frac{\alpha^2}{2}\int_{t_k}^{t_{k+1}}\left\vert\left(b(\theta,X_t)-b(\theta_1,X_t)\right)^{\ast}\sigma^{-1}(X_t)\right\vert^2dt}.
\end{split}
\end{equation*}
\end{lemma}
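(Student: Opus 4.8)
The plan is to estimate the left-hand side via a combination of Girsanov's theorem, a Taylor-type expansion of the Girsanov density in the parameter, and the moment estimates of Lemma \ref{c3moment3} together with \eqref{ytheta}. First I would write, by definition of $\widehat{\P}^\alpha$,
\begin{equation*}
\dfrac{d\widehat{\P}}{d\widehat{Q}_k^{\theta_1,\theta}}-1=\left[\dfrac{d\widehat{\P}^{\alpha}}{d\widehat{Q}_k^{\theta_1,\theta}}\right]_{\alpha=1}-\left[\dfrac{d\widehat{\P}^{\alpha}}{d\widehat{Q}_k^{\theta_1,\theta}}\right]_{\alpha=0}=\int_0^1\dfrac{d}{d\alpha}\dfrac{d\widehat{\P}^{\alpha}}{d\widehat{Q}_k^{\theta_1,\theta}}\,d\alpha,
\end{equation*}
and compute the $\alpha$-derivative explicitly: writing $G_k:=\int_{t_k}^{t_{k+1}}(b(\theta,X_t)-b(\theta_1,X_t))^{\ast}\sigma^{-1}(X_t)dB_t$ and $H_k:=\int_{t_k}^{t_{k+1}}|(b(\theta,X_t)-b(\theta_1,X_t))^{\ast}\sigma^{-1}(X_t)|^2dt$, the derivative is $(G_k-\alpha H_k)\frac{d\widehat{\P}^{\alpha}}{d\widehat{Q}_k^{\theta_1,\theta}}$. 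Substituting this back and applying Fubini (legitimate once the relevant integrability is checked), the left-hand side becomes
\begin{equation*}
\left\vert\int_0^1\E_{\widehat{\P}^{\alpha}}\left[V(G_k-\alpha H_k)\big\vert X_{t_k}^{\theta}\right]d\alpha\right\vert.
\end{equation*}

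Next I would bound the conditional expectation inside by Hölder's inequality with exponents $p$ and its conjugate $p'$, getting $\E_{\widehat{\P}^{\alpha}}[|V|^p\mid X_{t_k}^{\theta}]^{1/p}\,\E_{\widehat{\P}^{\alpha}}[|G_k-\alpha H_k|^{p'}\mid X_{t_k}^{\theta}]^{1/p'}$. It then remains to show that the second factor is at most $\frac{C}{\sqrt n}(1+|X_{t_k}^\theta|^{q_0})$ uniformly in $\alpha\in[0,1]$. Here I would use that under $\widehat{\P}^{\alpha}$ the process $B_t-\alpha\int_{t_k}^t\sigma^{-1}(X_s)(b(\theta,X_s)-b(\theta_1,X_s))ds$ is a Brownian motion (a Girsanov shift analogous to Lemma \ref{c3Girsanov}), so $G_k$ equals a stochastic integral against this new Brownian motion plus the drift correction $\alpha H_k$; the Burkholder--Davis--Gundy inequality then reduces everything to moments of $H_k$. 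By condition {\bf(A4)}(b) (or the Lipschitz bound {\bf(A1)}) and {\bf(A2)}, $|b(\theta,X_t)-b(\theta_1,X_t)|\le C|\theta-\theta_1|(1+|X_t|^q)$, hence the integrand of $H_k$ is bounded by $C|\theta-\theta_1|^2(1+|X_t|^{2q})$, so $H_k\le C|\theta-\theta_1|^2\int_{t_k}^{t_{k+1}}(1+|X_t|^{2q})dt$; taking $p'$-th moments, using $|\theta-\theta_1|\le C/\sqrt{n\Delta_n}$, the length $\Delta_n$ of the interval, and the moment estimates for $X^\theta$ from Lemma \ref{c3moment3}(ii), one gets $\E[H_k^{p'}\mid X_{t_k}^\theta]^{1/p'}\le C\frac{\Delta_n^2}{n\Delta_n}(1+|X_{t_k}^\theta|^{q_0})=C\frac{\Delta_n}{n}(1+|X_{t_k}^\theta|^{q_0})$, and similarly $\E[|G_k|^{p'}\mid X_{t_k}^\theta]^{1/p'}\le C(\frac{\Delta_n}{n\Delta_n})^{1/2}(1+|X_{t_k}^\theta|^{q_0})=\frac{C}{\sqrt n}(1+|X_{t_k}^\theta|^{q_0})$ since $\Delta_n\le 1$. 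The dominant term is $G_k$, giving the claimed $\frac{1}{\sqrt n}$ rate.

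Finally I would assemble the pieces: combining the Hölder bound with the estimate on the $G_k-\alpha H_k$ factor and integrating in $\alpha$ yields
\begin{equation*}
\left\vert\E_{\widehat{Q}_k^{\theta_1,\theta}}\left[V\left(\dfrac{d\widehat{\P}}{d \widehat{Q}_k^{\theta_1,\theta}}-1\right)\Big\vert X_{t_k}^{\theta}\right]\right\vert\leq\dfrac{C}{\sqrt n}(1+|X_{t_k}^\theta|^{q_0})\int_0^1\E_{\widehat{\P}^{\alpha}}\left[|V|^p\mid X_{t_k}^\theta\right]^{1/p}d\alpha,
\end{equation*}
which is exactly the assertion. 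The main obstacle I anticipate is not the final arithmetic but the careful justification of the change of measure and the moment control under the auxiliary family $\widehat{\P}^{\alpha}$ — in particular verifying that the Girsanov shift is valid uniformly in $\alpha$ (Novikov's condition for the exponential martingale holds because the drift difference is bounded by $C(1+|X_t|^q)$ and $X^\theta$ has moments of all orders) and that all moments of $X^\theta$ invoked are finite; conditioning on $X_{t_k}^\theta$ throughout, rather than on the full past, also requires that the relevant quantities on $[t_k,t_{k+1}]$ depend only on the increments and the starting point, which is where the Markov/flow structure of \eqref{c3eq1} enters.
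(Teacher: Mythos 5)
Your proposal is correct and matches the paper's proof in all essential respects: the fundamental theorem of calculus representation of $\frac{d\widehat{\P}}{d\widehat{Q}_k^{\theta_1,\theta}}-1$ as an $\alpha$-integral, the recognition via Girsanov that $G_k-\alpha H_k$ is a stochastic integral against the $\widehat{\P}^{\alpha}$-Brownian motion $W$, H\"older's inequality, Burkholder--Davis--Gundy, the mean value theorem together with {\bf(A2)}, {\bf(A4)}(b), and the moment bounds of Lemma \ref{c3moment3}(ii). The only differences are cosmetic (you split $G_k$ and $\alpha H_k$ before recombining, and you spell out the Novikov and Markov-conditioning points, which the paper leaves implicit); the slight arithmetic slip in your intermediate bound for $\E[H_k^{p'}]^{1/p'}$ does not affect the argument since that term is subdominant.
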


Next, we recall a discrete ergodic theorem.
\begin{lemma}\textnormal{\cite[Lemma 8]{K}}\label{c3ergodic} Assume conditions {\bf(A1)} and {\bf(A5)}. Consider a differentiable function $g: \R^d \to \R^{d'}$, whose derivatives have polynomial growth in $x$. Then, as $n\to\infty$,
\begin{equation*}
\dfrac{1}{n}\sum_{k=0}^{n-1}g(X_{t_k})\overset{\P^{\ta_0}}{\longrightarrow}\int_{\mathbb{R}^d}g(x)\pi_{\theta_0}(dx).
\end{equation*}
\end{lemma}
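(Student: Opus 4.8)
The plan is to compare the discrete average with the continuous ergodic average. Set $T_n:=n\Delta_n$, so $T_n\to\infty$, and note that
\begin{equation*}
\frac{1}{n}\sum_{k=0}^{n-1}g(X_{t_k})=\frac{1}{T_n}\sum_{k=0}^{n-1}\Delta_n\,g(X_{t_k})=\frac{1}{T_n}\int_0^{T_n}g\bigl(X^{\theta_0}_{\underline{t}}\bigr)\,dt,\qquad \underline{t}:=\Delta_n\lfloor t/\Delta_n\rfloor ,
\end{equation*}
is a Riemann-sum discretization of $\frac{1}{T_n}\int_0^{T_n}g(X^{\theta_0}_t)\,dt$. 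Integrating the polynomial-growth bound on $\nabla g$ along segments gives $|g(x)|\le C(1+|x|^{q})$ for some $q\geq 1$, so $g$ is $\pi_{\theta_0}$-integrable by the moment condition in {\bf(A5)}; applying {\bf(A5)} along $T=T_n$ then yields
\begin{equation*}
\frac{1}{T_n}\int_0^{T_n}g(X^{\theta_0}_t)\,dt\ \overset{\P^{\theta_0}}{\longrightarrow}\ \int_{\R^d}g(x)\,\pi_{\theta_0}(dx).
\end{equation*}
Since $\frac{1}{n}\sum_{k=0}^{n-1}g(X_{t_k})=\frac{1}{T_n}\int_0^{T_n}g(X^{\theta_0}_t)\,dt-R_n$ with $R_n:=\frac{1}{T_n}\sum_{k=0}^{n-1}\int_{t_k}^{t_{k+1}}\bigl(g(X^{\theta_0}_t)-g(X^{\theta_0}_{t_k})\bigr)\,dt$, it remains to show $R_n\to 0$ in $\P^{\theta_0}$-probability, which I would do by proving $\E^{\theta_0}[|R_n|]\to 0$.

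To estimate $\E^{\theta_0}[|R_n|]$, I would condition on $\widehat{\mathcal{F}}_{t_k}$ and use the time-homogeneous Markov property of $X^{\theta_0}$: for $t\in[t_k,t_{k+1}]$ one has $\E^{\theta_0}\bigl[|g(X^{\theta_0}_t)-g(X^{\theta_0}_{t_k})|\mid\widehat{\mathcal{F}}_{t_k}\bigr]=\psi_{t-t_k}(X^{\theta_0}_{t_k})$, where $\psi_s(x):=\E\bigl[|g(Y^{\theta_0}_s(0,x))-g(x)|\bigr]$. By the mean value theorem together with the polynomial growth of $\nabla g$, the Cauchy--Schwarz inequality, and the small-time moment estimates of Lemma~\ref{c3moment3} (valid for $X^{\theta_0}$ and $Y^{\theta_0}$), one obtains a bound of the form $\psi_s(x)\le C\,s^{1/2}\bigl(1+|x|^{q_0}\bigr)$ for $0\le s\le\Delta_n\le 1$, with $q_0$ depending only on $q$ and on the polynomial degree in {\bf(A1)}. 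Taking expectations, summing over $k$ and using $\int_{t_k}^{t_{k+1}}(t-t_k)^{1/2}\,dt=\tfrac{2}{3}\Delta_n^{3/2}$ together with $T_n=n\Delta_n$, this gives
\begin{equation*}
\E^{\theta_0}[|R_n|]\ \le\ C\,\Delta_n^{1/2}\,\sup_{0\le k\le n}\Bigl(1+\E^{\theta_0}\bigl[|X^{\theta_0}_{t_k}|^{q_0}\bigr]\Bigr).
\end{equation*}

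The delicate point — and the one I expect to be the main obstacle if one wants a self-contained argument rather than invoking the quantitative ergodicity (Lyapunov/drift) results behind {\bf(A5)} — is the uniform-in-time moment bound $\sup_{t\ge 0}\E^{\theta_0}\bigl[|X^{\theta_0}_t|^{p}\bigr]<\infty$ for every $p\geq 1$, which is standard for ergodic jump diffusions (see e.g. \cite{M07}). Granting it, the right-hand side above is $O(\Delta_n^{1/2})\to 0$, hence $R_n\to 0$ in $L^1(\P^{\theta_0})$ and therefore in $\P^{\theta_0}$-probability; combined with the convergence of the continuous average this proves the claim. All remaining ingredients — the Riemann-sum comparison, the mean value theorem, and the $L^2$-increment estimate (whose jump contribution already uses only $\int_{\R_0^d}\zeta^2(z)\nu(dz)<\infty$) — are routine, which is why in the text we simply refer to \cite[Lemma 8]{K}.
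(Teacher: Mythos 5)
The paper itself offers no proof of this lemma: it is quoted verbatim from Kessler \cite[Lemma 8]{K}. Your argument is exactly the standard proof behind that citation --- compare the discrete average with the continuous-time average furnished by {\bf(A5)}, and control the Riemann-sum remainder $R_n$ via the mean value theorem, the polynomial growth of $\nabla g$, Cauchy--Schwarz and the small-time increment estimates of Lemma \ref{c3moment3} --- and the estimates you sketch (in particular $\psi_s(x)\le Cs^{1/2}(1+\vert x\vert^{q_0})$ and $\E^{\theta_0}[\vert R_n\vert]=O(\Delta_n^{1/2})$ modulo moments) are correct. So in approach you coincide with the cited source.

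The one point to record is the caveat you flag yourself: the final bound is vacuous without the uniform-in-time moment estimate $\sup_{t\geq 0}\E^{\theta_0}[\vert X_t^{\theta_0}\vert^{p}]<\infty$, and this does \emph{not} follow from the literal statements of {\bf(A1)} and {\bf(A5)}: Gronwall under {\bf(A1)} alone only yields exponentially growing moments, while {\bf(A5)} concerns time averages and the moments of $\pi_{\theta_0}$, not of the marginal laws. In Kessler's paper this uniform moment bound is part of the standing assumptions, and for the jump diffusions considered here it is available under the drift-type conditions (e.g. \cite{M07}) that one would use to verify {\bf(A5)} in concrete models, so ``granting'' it is consistent with how the lemma is used in the literature; but it is an extra ingredient imported from outside {\bf(A1)}, {\bf(A5)} rather than a consequence of them, and a fully self-contained proof would either add it as a hypothesis or replace your $L^1$ bound on $R_n$ by a conditional argument (e.g. via Lemma \ref{zero2}) together with a tightness argument for $\frac{1}{n}\sum_k(1+\vert X_{t_k}\vert^{q_0})$.
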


We finally recall two convergence in probability results and a central limit theorem for triangular arrays of random variables. For each $n\in\mathbb{N}$, let $(Z_{k,n})_{k\geq 1}$ and $(\zeta_{k,n})_{k\geq 1}$ be two sequences of random variables defined on the filtered probability space $(\Omega, \mathcal{F}, \{\mathcal{F}_t\}_{t\geq 0}, \P)$, and assume that they are $\mathcal{F}_{t_{k+1}}$-measurable, for all $k$.
\begin{lemma}\label{zero} \textnormal{\cite[Lemma 9]{GJ93}} Assume that as $n  \rightarrow \infty$,  
\begin{equation*} 
\textnormal{(i)}\;  \sum_{k=0}^{n-1}\E\left[Z_{k,n}\vert \mathcal{F}_{t_k}\right] \overset{\P}{\longrightarrow} 0, \quad \text{ and } \quad \textnormal{(ii)} \,  \sum_{k=0}^{n-1}\E\left[Z_{k,n}^2\vert \mathcal{F}_{t_k} \right]\overset{\P}{\longrightarrow} 0.
\end{equation*}
Then as $n\rightarrow\infty$, $\sum_{k=0}^{n-1}Z_{k,n}\overset{\P}{\longrightarrow}0$.
\end{lemma}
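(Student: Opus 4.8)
The plan is to decompose each $Z_{k,n}$ into its $\mathcal{F}_{t_k}$-conditional mean and the associated martingale-difference term and to treat the two pieces separately. Set $A_{k,n}=\E[Z_{k,n}\vert\mathcal{F}_{t_k}]$ and $M_{k,n}=Z_{k,n}-A_{k,n}$, so that $\E[M_{k,n}\vert\mathcal{F}_{t_k}]=0$ and $\E[M_{k,n}^2\vert\mathcal{F}_{t_k}]=\E[Z_{k,n}^2\vert\mathcal{F}_{t_k}]-A_{k,n}^2\leq\E[Z_{k,n}^2\vert\mathcal{F}_{t_k}]$. By hypothesis (i), $\sum_{k=0}^{n-1}A_{k,n}\overset{\P}{\longrightarrow}0$, so it suffices to prove $\sum_{k=0}^{n-1}M_{k,n}\overset{\P}{\longrightarrow}0$; by the preceding inequality, hypothesis (ii) gives $\sum_{k=0}^{n-1}\E[M_{k,n}^2\vert\mathcal{F}_{t_k}]\overset{\P}{\longrightarrow}0$.

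For the martingale part I would fix $\varepsilon,\eta>0$ and a threshold $\delta>0$ to be chosen at the end, write $V_{k,n}=\E[M_{k,n}^2\vert\mathcal{F}_{t_k}]$, and localize by the first index at which the predictable conditional variance exceeds $\delta$: put $T_n=\inf\{m\geq 0:\sum_{j=0}^{m}V_{j,n}>\delta\}\wedge n$. Then $T_n$ is a stopping time for $(\mathcal{F}_{t_m})_m$ because $\{T_n>m\}=\{\sum_{j=0}^{m}V_{j,n}\leq\delta\}\in\mathcal{F}_{t_m}$, and the truncated array $(M_{k,n}\mathbf{1}_{\{k<T_n\}})_k$ is again a martingale-difference array since $\{k<T_n\}\in\mathcal{F}_{t_k}$. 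Each truncated increment is square integrable, with $\E[(M_{k,n}\mathbf{1}_{\{k<T_n\}})^2]=\E[\mathbf{1}_{\{k<T_n\}}V_{k,n}]\leq\delta$, so pairwise orthogonality of the martingale-difference terms yields $\E[(\sum_{k=0}^{n-1}M_{k,n}\mathbf{1}_{\{k<T_n\}})^2]=\E[\sum_{k=0}^{T_n-1}V_{k,n}]\leq\delta$, the last bound being exactly the defining property of $T_n$. Chebyshev's inequality then controls $\P(|\sum_{k=0}^{T_n-1}M_{k,n}|>\varepsilon/2)\leq 4\delta/\varepsilon^2$, while the localization costs only the event $\{\sum_{k=0}^{n-1}M_{k,n}\neq\sum_{k=0}^{T_n-1}M_{k,n}\}\subseteq\{T_n<n\}\subseteq\{\sum_{k=0}^{n-1}V_{k,n}>\delta\}$, whose probability tends to $0$ as $n\to\infty$ by (ii).

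Collecting the estimates, for $n$ large $\P(|\sum_{k=0}^{n-1}Z_{k,n}|>\varepsilon)$ is bounded by $\P(|\sum_{k}A_{k,n}|>\varepsilon/2)+4\delta/\varepsilon^2+\P(\sum_{k}V_{k,n}>\delta)$; choosing $\delta$ small enough and then $n$ large enough makes this smaller than $\eta$, which proves the lemma. Equivalently, the martingale step can be replaced by a single application of Lenglart's domination inequality to the pair $((\sum_{k<m}M_{k,n})^2,\ \sum_{k<m}V_{k,n})$. The only point that genuinely requires care is this localization: since the $Z_{k,n}$ are assumed only conditionally square integrable, one cannot estimate $\E[(\sum_k M_{k,n})^2]$ directly, and the stopping time $T_n$ is introduced precisely to recover the integrability needed for the $L^2$ bound while contributing an asymptotically negligible error.
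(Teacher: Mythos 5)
The paper does not prove this lemma: it is quoted verbatim from Genon-Catalot and Jacod \cite[Lemma 9]{GJ93}, so there is no internal proof to compare against. Your argument is correct and is essentially the proof given in that reference: decompose $Z_{k,n}$ into the $\mathcal{F}_{t_k}$-conditional mean $A_{k,n}$ plus a martingale difference $M_{k,n}$, dispose of the drift part directly by (i), and then control the martingale part by truncating with the $\mathcal{F}$-stopping time $T_n$ defined from the predictable quadratic variation so that the $L^2$-bound $\E\bigl[\bigl(\sum_{k<T_n}M_{k,n}\bigr)^2\bigr]\leq\delta$ holds unconditionally; the localization error $\{T_n<n\}$ is then asymptotically negligible by (ii). This is exactly the Lenglart-domination mechanism you mention as the alternative phrasing, and you correctly identify the one delicate point, namely that without localization the terms need not be in $L^2$, so the stopping time is indispensable rather than a convenience.
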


\begin{lemma}\label{zero2} \textnormal{\cite[Lemma 4.1]{J11}}  Assume that as $n  \rightarrow \infty$,  
\begin{equation*}
\sum_{k=0}^{n-1}\E\left[\vert Z_{k,n}\vert\vert \mathcal{F}_{t_k}\right] \overset{\P}{\longrightarrow} 0.
\end{equation*}
Then as $n  \rightarrow \infty$, 
$
\sum_{k=0}^{n-1}Z_{k,n}\overset{\P}{\longrightarrow} 0.
$
\end{lemma}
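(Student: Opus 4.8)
The plan is to reduce the statement to an elementary truncation argument, exploiting the fact that the hypothesis only controls the \emph{conditional} $L^1$-norms in probability (not in expectation). Write $A_n:=\sum_{k=0}^{n-1}\E[|Z_{k,n}|\,|\,\mathcal{F}_{t_k}]$ and $S_n:=\sum_{k=0}^{n-1}Z_{k,n}$; we must show $S_n\overset{\P}{\longrightarrow}0$. Fix $\varepsilon>0$ and an auxiliary level $\eta>0$. The idea is to ``stop'' the sum as soon as the running total of conditional $L^1$-norms exceeds $\eta$: introduce the $\mathcal{F}_{t_k}$-measurable, nondecreasing process $B_k:=\sum_{j=0}^{k}\E[|Z_{j,n}|\,|\,\mathcal{F}_{t_j}]$ (with $B_{-1}:=0$, so $B_{n-1}=A_n$) and set $\tilde Z_{k,n}:=Z_{k,n}\mathbf{1}_{\{B_k\le\eta\}}$ for $k\in\{0,\dots,n-1\}$.

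First I would establish the key deterministic bound $\sum_{k=0}^{n-1}\E[|\tilde Z_{k,n}|\,|\,\mathcal{F}_{t_k}]\le\eta$ a.s. Indeed, since $\mathbf{1}_{\{B_k\le\eta\}}$ is $\mathcal{F}_{t_k}$-measurable, $\E[|\tilde Z_{k,n}|\,|\,\mathcal{F}_{t_k}]=\mathbf{1}_{\{B_k\le\eta\}}(B_k-B_{k-1})$; by monotonicity of $(B_k)_{k\ge0}$ the indices with $B_k\le\eta$ form an initial segment $\{0,\dots,K-1\}$, so the sum telescopes to $B_{K-1}-B_{-1}=B_{K-1}\le\eta$ (and equals $0$ if $K=0$). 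Consequently $\E\!\left[\sum_{k}|\tilde Z_{k,n}|\right]=\E\!\left[\sum_{k}\E[|\tilde Z_{k,n}|\,|\,\mathcal{F}_{t_k}]\right]\le\eta$, and Markov's inequality yields $\P\!\left(|\sum_{k}\tilde Z_{k,n}|>\varepsilon\right)\le\eta/\varepsilon$ for every $n$.

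Next I would compare $S_n$ with $\sum_{k}\tilde Z_{k,n}$. On the event $\{A_n\le\eta\}$ one has $B_k\le B_{n-1}=A_n\le\eta$ for all $k\le n-1$, hence $\tilde Z_{k,n}=Z_{k,n}$ for all such $k$ and therefore $S_n=\sum_{k}\tilde Z_{k,n}$ on that event. It follows that $\P(|S_n|>\varepsilon)\le\P\!\left(|\sum_{k}\tilde Z_{k,n}|>\varepsilon\right)+\P(A_n>\eta)\le\eta/\varepsilon+\P(A_n>\eta)$. Letting $n\to\infty$ and using the assumption $A_n\overset{\P}{\longrightarrow}0$ gives $\limsup_n\P(|S_n|>\varepsilon)\le\eta/\varepsilon$; since $\eta>0$ was arbitrary, $\limsup_n\P(|S_n|>\varepsilon)=0$, and as $\varepsilon>0$ was arbitrary, $S_n\overset{\P}{\longrightarrow}0$.

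There is essentially no hard step here; the only point requiring care is the bookkeeping that makes the truncated conditional $L^1$-norms deterministically bounded by $\eta$ — namely, truncating via the running sums $B_k$ rather than via the individual terms $|Z_{k,n}|$ — since this is precisely what converts the in-probability hypothesis into a genuine $L^1$ bound to which Markov's inequality applies. (One should also note that the hypothesis forces $A_n<\infty$, hence each $\E[|Z_{k,n}|\,|\,\mathcal{F}_{t_k}]<\infty$, a.s.\ for $n$ large, so all conditional expectations above are well defined, and the $\mathcal{F}_{t_{k+1}}$-measurability of $Z_{k,n}$ is what makes $\E[\,\cdot\,|\,\mathcal{F}_{t_k}]$ a bona fide conditional expectation.) Alternatively, once the deterministic bound is in hand one could conclude by applying Lemma \ref{zero} to $(\tilde Z_{k,n})_{k}$ after a further truncation at a fixed level, but the direct Markov estimate above is shorter.
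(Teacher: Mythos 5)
Your proof is correct. Note that the paper does not prove this lemma at all: it simply cites \cite[Lemma 4.1]{J11}, so there is no internal argument to compare against. What you have written is essentially the standard proof underlying that citation (a Lenglart-type domination/truncation argument): truncating via the running conditional sums $B_k$ rather than via the individual $|Z_{k,n}|$ is exactly the right device, the telescoping bound $\sum_k \E[|\tilde Z_{k,n}|\,\vert\,\mathcal{F}_{t_k}]\le\eta$ is valid because the set $\{k: B_k\le\eta\}$ is an initial segment, and the comparison on $\{A_n\le\eta\}$ plus Markov's inequality closes the argument. One small remark: your parenthetical about the hypothesis forcing $A_n<\infty$ a.s.\ for large $n$ is both imprecise (convergence in probability only gives $\P(A_n=\infty)\to 0$) and unnecessary, since conditional expectations of nonnegative variables are always well defined with values in $[0,\infty]$ and your truncation already neutralizes any infinite values; the rest of the proof does not use that remark.
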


\begin{lemma}\label{clt} \textnormal{\cite[Lemma 4.3]{J11}} 
Assume that there exist real numbers $M$ and $V>0$ such that 
\begin{equation*}\begin{split}
&\sum_{k=0}^{n-1}\E\left[\zeta_{k,n}\vert \mathcal{F}_{t_k}\right] \overset{\P}{\longrightarrow} M, \qquad \sum_{k=0}^{n-1}\left(\E\left[\zeta_{k,n}^2\vert \mathcal{F}_{t_k} \right]-\left(\E\left[\zeta_{k,n}\vert \mathcal{F}_{t_k}\right]\right)^2\right)\overset{\P}{\longrightarrow} V, \text{ and }\\
&\sum_{k=0}^{n-1}\E\left[\zeta_{k,n}^4\vert \mathcal{F}_{t_k}\right] \overset{\P}{\longrightarrow} 0,
\end{split}
\end{equation*}
as $n \rightarrow \infty$. Then as $n  \rightarrow \infty$,  $\sum_{k=0}^{n-1}\zeta_{k,n}\overset{\mathcal{L}(\P)}{\longrightarrow} \mathcal{N}+M$, where $\mathcal{N}$ is a centered Gaussian random variable with variance $V$.
\end{lemma}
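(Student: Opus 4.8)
The plan is to reduce the statement to the classical central limit theorem for triangular arrays of martingale differences. First I would center the array: set $\xi_{k,n}:=\zeta_{k,n}-\E[\zeta_{k,n}\vert\mathcal{F}_{t_k}]$, so that for each fixed $n$ the family $(\xi_{k,n})_{0\le k\le n-1}$ is a martingale difference array with respect to the nested filtration $(\mathcal{F}_{t_k})_{0\le k\le n}$. Writing
\[
\sum_{k=0}^{n-1}\zeta_{k,n}=\sum_{k=0}^{n-1}\xi_{k,n}+\sum_{k=0}^{n-1}\E[\zeta_{k,n}\vert\mathcal{F}_{t_k}],
\]
the second sum converges in probability to $M$ by the first hypothesis, so by Slutsky's theorem it suffices to show $\sum_{k=0}^{n-1}\xi_{k,n}\overset{\mathcal{L}(\P)}{\longrightarrow}\mathcal{N}$, a centered Gaussian with variance $V$.

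Next I would verify the two hypotheses of the martingale CLT for $(\xi_{k,n})$: convergence of the conditional variance, $\sum_{k=0}^{n-1}\E[\xi_{k,n}^2\vert\mathcal{F}_{t_k}]\overset{\P}{\longrightarrow}V$, and a conditional Lindeberg condition, $\sum_{k=0}^{n-1}\E[\xi_{k,n}^2\mathbf{1}_{\{\vert\xi_{k,n}\vert>\varepsilon\}}\vert\mathcal{F}_{t_k}]\overset{\P}{\longrightarrow}0$ for every $\varepsilon>0$. The first is immediate: $\E[\xi_{k,n}^2\vert\mathcal{F}_{t_k}]=\E[\zeta_{k,n}^2\vert\mathcal{F}_{t_k}]-(\E[\zeta_{k,n}\vert\mathcal{F}_{t_k}])^2$ is exactly the conditional variance of $\zeta_{k,n}$, so this is precisely the second hypothesis. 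For the Lindeberg condition, Chebyshev's inequality gives $\E[\xi_{k,n}^2\mathbf{1}_{\{\vert\xi_{k,n}\vert>\varepsilon\}}\vert\mathcal{F}_{t_k}]\le\varepsilon^{-2}\E[\xi_{k,n}^4\vert\mathcal{F}_{t_k}]$, and by convexity of $t\mapsto t^4$ together with conditional Jensen one has $\E[\xi_{k,n}^4\vert\mathcal{F}_{t_k}]\le 8\bigl(\E[\zeta_{k,n}^4\vert\mathcal{F}_{t_k}]+(\E[\zeta_{k,n}\vert\mathcal{F}_{t_k}])^4\bigr)\le 16\,\E[\zeta_{k,n}^4\vert\mathcal{F}_{t_k}]$; hence the Lindeberg sum is bounded by $16\varepsilon^{-2}\sum_{k=0}^{n-1}\E[\zeta_{k,n}^4\vert\mathcal{F}_{t_k}]$, which tends to zero in probability by the third hypothesis. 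The same bound also yields the asymptotic negligibility $\max_{k}\vert\xi_{k,n}\vert\overset{\P}{\longrightarrow}0$ required in some formulations of the theorem.

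With these two conditions verified, the martingale central limit theorem for triangular arrays applies; since the limiting variance $V$ is a deterministic constant, one obtains genuine convergence in law (rather than merely stable or mixed-normal convergence), namely $\sum_{k=0}^{n-1}\xi_{k,n}\overset{\mathcal{L}(\P)}{\longrightarrow}\mathcal{N}$ with $\mathcal{N}$ centered Gaussian of variance $V$. Combining this with the convergence of $\sum_{k=0}^{n-1}\E[\zeta_{k,n}\vert\mathcal{F}_{t_k}]$ to the constant $M$ and applying Slutsky's theorem gives $\sum_{k=0}^{n-1}\zeta_{k,n}\overset{\mathcal{L}(\P)}{\longrightarrow}\mathcal{N}+M$, as claimed.

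I do not expect a serious obstacle here: the entire content is the reduction to a textbook martingale CLT, and the only point needing a little care is the transfer of the fourth-moment control from $\zeta_{k,n}$ to the centered differences $\xi_{k,n}$, which feeds the conditional Lindeberg condition; the rest is bookkeeping and an appeal to Slutsky. If one preferred to avoid invoking the martingale CLT as a black box, the characteristic-function argument — expanding $\prod_{k}\E[e^{iu\xi_{k,n}}\vert\mathcal{F}_{t_k}]$ to second order and bounding the remainder by the conditional fourth moments — could be carried out directly, but this merely reconstructs the classical theorem.
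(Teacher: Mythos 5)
Your argument is correct. Note that the paper does not prove this lemma at all: it is quoted from Jacod \cite[Lemma 4.3]{J11} (in the same spirit as \cite[Lemma 9]{GJ93}), so there is no internal proof to compare against; what you have written is essentially the standard derivation behind the cited result. The reduction is sound: centering $\zeta_{k,n}$ gives a martingale difference array with respect to $(\mathcal{F}_{t_k})_k$, the drift $\sum_{k}\E[\zeta_{k,n}\vert\mathcal{F}_{t_k}]\overset{\P}{\to}M$ is handled by Slutsky since $M$ is a constant, the conditional variance of the centered array is exactly the quantity in the second hypothesis, and the conditional Lindeberg condition follows from the fourth-moment hypothesis via $\E[\xi_{k,n}^4\vert\mathcal{F}_{t_k}]\le 16\,\E[\zeta_{k,n}^4\vert\mathcal{F}_{t_k}]$ (the constant $8$ from convexity plus conditional Jensen), combined with the conditional Chebyshev bound. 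The one point worth making fully explicit is the one you only gesture at: the martingale CLT with conditional variances converging merely in probability requires either nested $\sigma$-fields across $n$ or a deterministic limit; here the $\sigma$-fields $\mathcal{F}_{k\Delta_n}$ are not nested in $n$ (since $\Delta_n$ decreases), so it is precisely the constancy of $V$ (and of $M$) that allows you to invoke the plain convergence-in-law version of the theorem rather than a stable or mixed-normal variant. With that remark added, the proof is complete.
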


\section{Proof of Theorem \ref{c3result}}
\label{proof}

In this section, the proof of Theorem \ref{c3result} will be divided into three steps. We begin deriving a stochastic expansion of the log-likelihood ratio using Proposition \ref{c3Gobet} and Lemma \ref{c3delta}. The second step deals with the main contributions by applying the central limit theorem for triangular arrays to show the LAN property. Finally, the last step is devoted to treat the negligible contributions of the expansion.

\subsection{Expansion of the log-likelihood ratio}
\label{c3sec:expan}
\begin{lemma} Assume conditions {\bf(A1)}, {\bf(A2)} and {\bf(A4)}\textnormal{(a)}-\textnormal{(c)}. Then
\begin{equation}\label{expansion}\begin{split}
&\log\dfrac{p_n(X^{n};\theta_n)}{p_n(X^{n};\theta_0)}
=\sum_{k=0}^{n-1}\xi_{k,n}+\dfrac{u}{\sqrt{n\Delta_n^3}} \sum_{k=0}^{n-1}\int_0^1 \bigg\{Z_{k,n}^{4,\ell}+Z_{k,n}^{5,\ell}+Z_{k,n}^{6,\ell}\\
&\quad+\widetilde{\E}_{X_{t_{k}}}^{\theta(\ell)}\left[-R_{1}^{\theta(\ell),k}+R_{2}^{\theta(\ell),k}+R_{3}^{\theta(\ell),k}-R_4^{\theta(\ell),k}-R_5^{\theta(\ell),k}-R_6^{\theta(\ell),k}\Big\vert Y_{t_{k+1}}^{\theta(\ell)}=X_{t_{k+1}}\right]\bigg\} d \ell,
\end{split}
\end{equation}
where $\theta(\ell):=\theta_n(\ell,u):=\theta_0+\frac{\ell u}{\sqrt{n\Delta_n}}$, and
\begin{align*}
&\xi_{k,n}=\dfrac{u}{\sqrt{n\Delta_n}}\int_0^1\left(\partial_{\theta}
b(\theta(\ell),X_{t_k})\right)^{\ast}(\sigma\sigma^{\ast})^{-1}(X_{t_k})\\
&\qquad \qquad \times \left(\sigma(X_{t_k})\left(B_{t_{k+1}}-B_{t_{k}}\right)+\left(b(\theta_0,X_{t_k})-b(\theta(\ell),X_{t_k})\right)\Delta_n\right)d\ell,\\
&Z_{k,n}^{4,\ell}=\Delta_n\left(\partial_{\theta}
b(\theta(\ell),X_{t_k})\right)^{\ast}(\sigma\sigma^{\ast})^{-1}(X_{t_k})\int_{t_k}^{t_{k+1}}\left(b(\theta_0,X_{s}^{\theta_0})-b(\theta_0,X_{t_k})\right)ds,\\
&Z_{k,n}^{5,\ell}=\Delta_n\left(\partial_{\theta}
b(\theta(\ell),X_{t_k})\right)^{\ast}(\sigma\sigma^{\ast})^{-1}(X_{t_k})\int_{t_k}^{t_{k+1}}\left(\sigma(X_{s}^{\theta_0})-\sigma(X_{t_k})\right)dB_s,\\
&Z_{k,n}^{6,\ell}=\Delta_n\left(\partial_{\theta}
b(\theta(\ell),X_{t_k})\right)^{\ast}(\sigma\sigma^{\ast})^{-1}(X_{t_k})\int_{t_k}^{t_{k+1}}
\int_{\R_0^d}c(X_{s-}^{\theta_0},z)\widetilde{N}(ds,dz).
\end{align*}
\end{lemma}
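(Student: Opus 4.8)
The plan is to derive the expansion \eqref{expansion} by combining the Markov property, a mean value argument in the parameter, and the Malliavin representation from Proposition \ref{c3Gobet} together with the decomposition in Lemma \ref{c3delta}. First I would write the log-likelihood ratio using the Markov property of $X^{\theta}$: since $p_n(x_0,x_1,\dots,x_n;\theta)=\prod_{k=0}^{n-1}p^{\theta}(\Delta_n,x_k,x_{k+1})$, we obtain
\[
\log\frac{p_n(X^n;\theta_n)}{p_n(X^n;\theta_0)}=\sum_{k=0}^{n-1}\bigl(\log p^{\theta_n}(\Delta_n,X_{t_k},X_{t_{k+1}})-\log p^{\theta_0}(\Delta_n,X_{t_k},X_{t_{k+1}})\bigr).
\]
Since $\theta\mapsto p^{\theta}$ is differentiable, the fundamental theorem of calculus applied along the segment $\theta(\ell)=\theta_0+\frac{\ell u}{\sqrt{n\Delta_n}}$, $\ell\in[0,1]$, gives each summand as $\int_0^1 (\theta_n-\theta_0)\,\frac{\partial_{\theta}p^{\theta(\ell)}}{p^{\theta(\ell)}}(\Delta_n,X_{t_k},X_{t_{k+1}})\,d\ell$, and $\theta_n-\theta_0=\frac{u}{\sqrt{n\Delta_n}}$.

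Next I would substitute the explicit expression for $\frac{\partial_{\theta}p^{\theta}}{p^{\theta}}(\Delta_n,x,y)$ from Proposition \ref{c3Gobet}, namely $\frac{1}{\Delta_n}\widetilde{\E}_x^{\theta}[\delta(U^{\theta}(t_k,x)\partial_{\theta}Y_{t_{k+1}}^{\theta}(t_k,x))\mid Y_{t_{k+1}}^{\theta}=y]$, evaluated at $x=X_{t_k}$, $y=X_{t_{k+1}}$. This produces the prefactor $\frac{u}{\sqrt{n\Delta_n}}\cdot\frac{1}{\Delta_n}=\frac{u}{\sqrt{n\Delta_n^3}}$ for the terms coming from the Skorohod integral; note this matches the prefactor in front of the bracketed sum in \eqref{expansion}. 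Then I would plug in the decomposition of the Skorohod integral from Lemma \ref{c3delta}: the leading term is $\Delta_n(\partial_{\theta}b(\theta(\ell),Y_{t_k}^{\theta(\ell)}))^{\ast}(\sigma\sigma^{\ast})^{-1}(Y_{t_k}^{\theta(\ell)})(Y_{t_{k+1}}^{\theta(\ell)}-Y_{t_k}^{\theta(\ell)}-b(\theta(\ell),Y_{t_k}^{\theta(\ell)})\Delta_n)$, followed by $-R_1^{\theta(\ell),k}+R_2^{\theta(\ell),k}+R_3^{\theta(\ell),k}-R_4^{\theta(\ell),k}-R_5^{\theta(\ell),k}-R_6^{\theta(\ell),k}$. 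The six $R$-terms pass directly under the conditional expectation $\widetilde{\E}_{X_{t_k}}^{\theta(\ell)}[\,\cdot\mid Y_{t_{k+1}}^{\theta(\ell)}=X_{t_{k+1}}]$, yielding the second line of \eqref{expansion}. The one subtlety here is that $R_4,R_5,R_6$ in Lemma \ref{c3delta} are written in terms of the process $Y^{\theta(\ell)}$, whereas $Z_{k,n}^{4,\ell},Z_{k,n}^{5,\ell},Z_{k,n}^{6,\ell}$ are written in terms of the observed process $X^{\theta_0}$; I would keep the $R$-forms inside the conditional expectation and show that, after conditioning on $Y_{t_{k+1}}^{\theta(\ell)}=X_{t_{k+1}}$ and using the defining property $\widetilde{\E}_x^{\theta}[V]=\widetilde{\E}[V\mid Y_{t_k}^{\theta}=x]$, it is consistent with the stated form — in fact I expect \eqref{expansion} simply records $R_4,R_5,R_6$ once more inside the conditional expectation (the $Z^{4},Z^{5},Z^{6}$ defined below the lemma are the later-to-be-analyzed versions under $\P^{\theta_0}$, and the identification of one with the other is deferred to the subsequent steps of the proof of Theorem \ref{c3result}).

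The remaining work is to extract the main term $\xi_{k,n}$. Conditionally on $Y_{t_{k+1}}^{\theta(\ell)}=X_{t_{k+1}}$ with $Y_{t_k}^{\theta(\ell)}=X_{t_k}$, the increment $Y_{t_{k+1}}^{\theta(\ell)}-Y_{t_k}^{\theta(\ell)}$ equals $X_{t_{k+1}}-X_{t_k}$. Using the SDE \eqref{c3eq1} for $X^{\theta_0}$ one writes $X_{t_{k+1}}-X_{t_k}-b(\theta(\ell),X_{t_k})\Delta_n=\sigma(X_{t_k})(B_{t_{k+1}}-B_{t_k})+(b(\theta_0,X_{t_k})-b(\theta(\ell),X_{t_k}))\Delta_n + (\text{increment remainders})$, where the increment remainders are $\int_{t_k}^{t_{k+1}}(b(\theta_0,X_s^{\theta_0})-b(\theta_0,X_{t_k}))ds+\int_{t_k}^{t_{k+1}}(\sigma(X_s^{\theta_0})-\sigma(X_{t_k}))dB_s+\int_{t_k}^{t_{k+1}}\int_{\R_0^d}c(X_{s-}^{\theta_0},z)\widetilde N(ds,dz)$. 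Multiplying by $\frac{u}{\sqrt{n\Delta_n^3}}\,\Delta_n(\partial_{\theta}b(\theta(\ell),X_{t_k}))^{\ast}(\sigma\sigma^{\ast})^{-1}(X_{t_k})$ and integrating in $\ell$, the $\sigma(X_{t_k})(B_{t_{k+1}}-B_{t_k})$ and $(b(\theta_0,X_{t_k})-b(\theta(\ell),X_{t_k}))\Delta_n$ parts assemble exactly into $\xi_{k,n}$ (the $\sqrt{n\Delta_n^3}$ in the denominator cancelling one factor $\Delta_n$ to leave $\sqrt{n\Delta_n}$), while the three increment-remainder parts, again multiplied by the same prefactor, assemble into $\frac{u}{\sqrt{n\Delta_n^3}}(Z_{k,n}^{4,\ell}+Z_{k,n}^{5,\ell}+Z_{k,n}^{6,\ell})$, which is the source of the $Z^{4,5,6}$ terms in \eqref{expansion}. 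The main bookkeeping obstacle is therefore purely algebraic: correctly tracking which pieces of $\delta(U^{\theta}\partial_{\theta}Y^{\theta})$ involve the $Y$-process (and must stay inside the conditional expectation with the measure $\widetilde{\P}_{X_{t_k}}^{\theta(\ell)}$) versus which pieces collapse to functionals of the observed path $X^{\theta_0}$ once the conditioning $Y_{t_{k+1}}^{\theta(\ell)}=X_{t_{k+1}}$, $Y_{t_k}^{\theta(\ell)}=X_{t_k}$ is imposed; no hard analysis is needed for this lemma, since all the estimates justifying the manipulations (integrability of $\partial_{\theta}Y^{\theta}$, of $(\nabla_x Y^{\theta})^{-1}$, etc.) are already provided by \eqref{ytheta}, Lemma \ref{c3moment3} and Lemma \ref{c3estimate}.
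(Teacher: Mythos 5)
Your proposal is correct and follows exactly the same route as the paper: Markov property to factor the likelihood, fundamental theorem of calculus in $\theta$ along the segment $\theta(\ell)$, substitution of the Malliavin representation from Proposition \ref{c3Gobet}, then Lemma \ref{c3delta} for the Skorohod integral, and finally equation \eqref{c3eq1} to rewrite $X_{t_{k+1}}-X_{t_k}$ and extract $\xi_{k,n}$ and $Z_{k,n}^{4,\ell},Z_{k,n}^{5,\ell},Z_{k,n}^{6,\ell}$. One small imprecision in your middle paragraph: the $Z^{4,5,6}$ terms are not ``alternate versions'' of $R_4,R_5,R_6$ to be identified later; both sets of terms appear side by side in \eqref{expansion} as separate contributions (the $Z$'s from replacing the conditioned increment $X_{t_{k+1}}-X_{t_k}$ via \eqref{c3eq1}, the $R$'s from the conditional expectation of the Skorohod-integral remainder), and it is the \emph{differences} $Z^{j,\ell}_{k,n}-\widetilde{\E}^{\theta(\ell)}_{X_{t_k}}[R_j^{\theta(\ell),k}\mid\cdot]$ that are later shown negligible — your final paragraph gets this right, so the proof stands.
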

\begin{proof}
By the Markov property and Proposition \ref{c3Gobet},
\begin{equation*} \begin{split}
&\log\dfrac{p_n(X^{n};\theta_n)}{p_n(X^{n};\theta_0)}=\sum_{k=0}^{n-1}\log\dfrac{p^{\theta_n}}{p^{\theta_0}}(\Delta_n,X_{t_k},X_{t_{k+1}})\\
&\qquad=\sum_{k=0}^{n-1}\dfrac{u}{\sqrt{n\Delta_n}}\int_0^1\dfrac{\partial_{\theta}p^{\theta(\ell)}}{p^{\theta(\ell)}}(\Delta_n,X_{t_k},X_{t_{k+1}})d\ell\\
&\qquad=\sum_{k=0}^{n-1}\dfrac{u}{\sqrt{n\Delta_n^3}}\int_0^1\widetilde{\E}_{X_{t_k}}^{\theta(\ell)}\left[\delta\left(U^{\theta(\ell)}(t_k,X_{t_k})\partial_{\theta}Y_{t_{k+1}}^{\theta(\ell)}(t_k,X_{t_k})\right)\Big\vert Y_{t_{k+1}}^{\theta(\ell)}=X_{t_{k+1}}\right]d\ell.
\end{split}
\end{equation*}

We next apply Lemma \ref{c3delta}, and use equation \eqref{c3eq1} for the term $X_{t_{k+1}}-X_{t_{k}}$ coming from the term $Y_{t_{k+1}}^{\theta(\ell)}-Y_{t_{k}}^{\theta(\ell)}$ in Lemma \ref{c3delta}, to obtain the expansion of the log-likelihood ratio \eqref{expansion}. Thus, the result follows.
\end{proof}

In the next two subsections, we will show that $\xi_{k,n}$ is the only term that contributes to the limit and all the others terms are negligible. Therefore, the main term in the asymptotic behavior is  given by the Gaussian and drift components of equation \eqref{c3eq1}.

In all what follows, hypothesis {\bf(A5)} and Lemma \ref{c3ergodic} will be used repeatedly without being quoted.

\subsection{Main contributions: LAN property}
\begin{lemma}\label{main} Assume conditions {\bf(A1)}, {\bf(A2)}, {\bf(A4)}\textnormal{(a)}-\textnormal{(d)}, {\bf(A5)} and {\bf(A6)}. Then as $n\to\infty$,
\begin{align*}
\sum_{k=0}^{n-1}\xi_{k,n}\overset{\mathcal{L}(\P^{\theta_0})}{\longrightarrow}u\mathcal{N}\left(0,\Gamma(\theta_0)\right)-\dfrac{u^2}{2}\Gamma\left(\theta_0\right),
\end{align*}
where 
$\Gamma(\theta_0)$ is defined in Theorem \ref{c3result}. 
\end{lemma}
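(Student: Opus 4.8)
The plan is to verify the three hypotheses of the central limit theorem for triangular arrays quoted in Lemma \ref{clt}, applied to the sequence $\zeta_{k,n}=\xi_{k,n}$ under $\P^{\theta_0}$ and with respect to the filtration $\{\mathcal{F}_{t_k}\}$. First I would simplify $\xi_{k,n}$ itself: since $\partial_\theta b(\theta(\ell),X_{t_k})$ is $C^1$ in $\theta$ with the H\"older control {\bf(A4)}\textnormal{(d)} and $|\theta(\ell)-\theta_0|\le |u|/\sqrt{n\Delta_n}$, I can replace $\partial_\theta b(\theta(\ell),X_{t_k})$ by $\partial_\theta b(\theta_0,X_{t_k})$ and perform the $\ell$-integral at the cost of an error that is $o$ of the main term; likewise the deterministic drift piece $b(\theta_0,X_{t_k})-b(\theta(\ell),X_{t_k})=-\tfrac{\ell u}{\sqrt{n\Delta_n}}\partial_\theta b(\theta_0,X_{t_k})+\cdots$. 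This reduces $\xi_{k,n}$ to the sum of a martingale-increment term
\[
\frac{u}{\sqrt{n\Delta_n}}\bigl(\partial_\theta b(\theta_0,X_{t_k})\bigr)^{\ast}(\sigma\sigma^{\ast})^{-1}(X_{t_k})\,\sigma(X_{t_k})\,(B_{t_{k+1}}-B_{t_k})
\]
plus a drift term of order $-\tfrac{u^2}{2n}\bigl(\partial_\theta b(\theta_0,X_{t_k})\bigr)^{\ast}(\sigma\sigma^{\ast})^{-1}(X_{t_k})\partial_\theta b(\theta_0,X_{t_k})$, plus negligible remainders.

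Next I would compute the three conditional moment sums. For the first, $\E[\xi_{k,n}\mid\mathcal{F}_{t_k}]$: the Brownian increment has conditional mean zero, so only the drift term survives, giving $\sum_k \E[\xi_{k,n}\mid\mathcal{F}_{t_k}]\approx -\tfrac{u^2}{2}\cdot\tfrac1n\sum_k \bigl(\partial_\theta b(\theta_0,X_{t_k})\bigr)^{\ast}(\sigma\sigma^{\ast})^{-1}(X_{t_k})\partial_\theta b(\theta_0,X_{t_k})$, which converges in $\P^{\theta_0}$-probability to $-\tfrac{u^2}{2}\Gamma(\theta_0)$ by the discrete ergodic theorem (Lemma \ref{c3ergodic}). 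So $M=-\tfrac{u^2}{2}\Gamma(\theta_0)$. For the second, $\E[\xi_{k,n}^2\mid\mathcal{F}_{t_k}]-(\E[\xi_{k,n}\mid\mathcal{F}_{t_k}])^2$: the squared-mean term is $O(1/n^2)$ summed, hence $o(1)$, and $\E[\xi_{k,n}^2\mid\mathcal{F}_{t_k}]$ is dominated by the square of the Brownian term, whose conditional variance is $\tfrac{u^2}{n\Delta_n}\bigl(\partial_\theta b(\theta_0,X_{t_k})\bigr)^{\ast}(\sigma\sigma^{\ast})^{-1}\sigma\sigma^{\ast}(\sigma\sigma^{\ast})^{-1}\partial_\theta b(\theta_0,X_{t_k})\cdot\Delta_n=\tfrac{u^2}{n}\bigl(\partial_\theta b(\theta_0,X_{t_k})\bigr)^{\ast}(\sigma\sigma^{\ast})^{-1}(X_{t_k})\partial_\theta b(\theta_0,X_{t_k})$; summing and using Lemma \ref{c3ergodic} gives $V=u^2\Gamma(\theta_0)$. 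For the third, $\sum_k\E[\xi_{k,n}^4\mid\mathcal{F}_{t_k}]$: each term is $O\bigl((n\Delta_n)^{-2}\bigr)$ times a polynomially bounded function of $X_{t_k}$ times $\E[|B_{t_{k+1}}-B_{t_k}|^4\mid\mathcal{F}_{t_k}]=3\Delta_n^2$, so each is $O(\Delta_n^2/(n\Delta_n)^2)=O(1/n^2)$ up to moment bounds, hence the sum is $O(\Delta_n^2/n)\cdot\sup_k\E[\cdot]\to 0$; here moment estimates from {\bf(A5)} (finiteness of all moments of $\pi_{\theta_0}$, together with Lemma \ref{c3moment3}) control the $X_{t_k}$-dependent factors. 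With $M$ and $V$ identified, Lemma \ref{clt} yields $\sum_k\xi_{k,n}\overset{\mathcal{L}(\P^{\theta_0})}{\longrightarrow}\mathcal{N}(0,u^2\Gamma(\theta_0))-\tfrac{u^2}{2}\Gamma(\theta_0)$, which is exactly $u\mathcal{N}(0,\Gamma(\theta_0))-\tfrac{u^2}{2}\Gamma(\theta_0)$.

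The main obstacle I anticipate is not any single moment estimate — those are routine given {\bf(A4)}, {\bf(A6)} and the moment control on the invariant law — but rather the bookkeeping needed to show that replacing $\partial_\theta b(\theta(\ell),X_{t_k})$ by $\partial_\theta b(\theta_0,X_{t_k})$ (using {\bf(A4)}\textnormal{(d)}) and expanding the deterministic drift difference produce only terms whose contributions vanish after summation over $k$ and integration over $\ell$; one must be careful that a term of pointwise size $O((n\Delta_n)^{-1}\cdot(n\Delta_n)^{-\epsilon/2})$ summed $n$ times against a Brownian increment stays negligible, which it does because the martingale part has summed conditional variance $O(n\cdot\Delta_n/(n\Delta_n)^{1+\epsilon})=O((n\Delta_n)^{-\epsilon})\to 0$ and the drift part is even smaller. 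A secondary point requiring care is that all convergences are under $\P^{\theta_0}$ while $\xi_{k,n}$ involves $B_{t_{k+1}}-B_{t_k}$, which genuinely is a $\P^{\theta_0}$-Brownian increment for the observed process, so no Girsanov change of measure is needed at this stage — that subtlety is deferred to the treatment of the negligible terms $Z_{k,n}^{j,\ell}$ and the $R_i^{\theta(\ell),k}$ in the next subsection.
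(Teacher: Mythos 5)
Your proposal is correct and follows essentially the same route as the paper: apply Lemma \ref{clt} to $\xi_{k,n}$ under $\P^{\theta_0}$, verify the three conditional moment conditions using the mean-zero property of the Brownian increment, the discrete ergodic theorem (Lemma \ref{c3ergodic}), and the H\"older estimate {\bf(A4)}\textnormal{(d)} to absorb the $\theta(\ell)\to\theta_0$ replacement errors. The only cosmetic difference is the bookkeeping order — you replace $\partial_\theta b(\theta(\ell),\cdot)$ by $\partial_\theta b(\theta_0,\cdot)$ up front, while the paper keeps $\theta(\ell)$ throughout and isolates the error terms $H_1,\dots,H_5$ after computing each conditional moment — but the estimates involved are identical.
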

\begin{proof}
Applying Lemma \ref{clt} to $\xi_{k,n}$, we need to consider $\E^{\theta_0}[\xi_{k,n}^r\vert \widehat{\mathcal{F}}_{t_k}]$ for $r\in\{1,2,4\}$. This conditional expectation equals $\E[\xi_{k,n}^r\vert \widehat{\mathcal{F}}_{t_k}]$. Therefore, it suffices to show that as $n\to\infty$,
\begin{align}
&\sum_{k=0}^{n-1}\E\left[\xi_{k,n}\vert \widehat{\mathcal{F}}_{t_k}\right]\overset{\P^{\theta_0}}{\longrightarrow}-\dfrac{u^2}{2}\Gamma(\theta_0),\label{c3l1} \\
&\sum_{k=0}^{n-1}\left(\E\left[\xi_{k,n}^2\vert \widehat{\mathcal{F}}_{t_k}\right]-\left(\E\left[\xi_{k,n}\vert \widehat{\mathcal{F}}_{t_k}\right]\right)^2\right)\overset{\P^{\theta_0}}{\longrightarrow}u^2\Gamma(\theta_0), \label{c3l2}\\
&\sum_{k=0}^{n-1}\E\left[\xi_{k,n}^4\vert \widehat{\mathcal{F}}_{t_k}\right]\overset{\P^{\theta_0}}{\longrightarrow}0. \label{c3l3}
\end{align}

{\it Proof of \eqref{c3l1}.} Using the fact that $\E[B_{t_{k+1}}-B_{t_{k}}\vert \widehat{\mathcal{F}}_{t_k}]=0$ and the mean value theorem for vector-valued functions, we get that
\begin{align*}
\sum_{k=0}^{n-1}\E\left[\xi_{k,n}\vert \widehat{\mathcal{F}}_{t_k}\right]&=-\dfrac{u^2}{n}\sum_{k=0}^{n-1}\int_0^1\ell\left(\partial_{\theta}
b(\theta(\ell),X_{t_k})\right)^{\ast}(\sigma\sigma^{\ast})^{-1}(X_{t_k})\int_0^1\partial_{\theta}
b(\theta(\ell,\alpha),X_{t_k})d\alpha d\ell\\
&=-\dfrac{u^2}{2n}\sum_{k=0}^{n-1}\left(\partial_{\theta}b(\theta_0,X_{t_k})\right)^{\ast}(\sigma\sigma^{\ast})^{-1}(X_{t_k})\partial_{\theta}b(\theta_0,X_{t_k})-H_1-H_2,
\end{align*}
where $\theta(\ell,\alpha):=\theta_0+\frac{\alpha\ell u}{\sqrt{n\Delta_n}}$, $H_1=\sum_{k=0}^{n-1}H_{k,n}$, and
\begin{align*}
&H_{k,n}=\dfrac{u^2}{n}\int_0^1\ell\left(\partial_{\theta}
b(\theta(\ell),X_{t_k})\right)^{\ast}(\sigma\sigma^{\ast})^{-1}(X_{t_k})\int_0^1\left(\partial_{\theta}
b(\theta(\ell,\alpha),X_{t_k})-\partial_{\theta}b(\theta_0,X_{t_k})\right)d\alpha d\ell,\\
&H_2=\dfrac{u^2}{n}\sum_{k=0}^{n-1}\int_0^1\ell\left(\partial_{\theta}
b(\theta(\ell),X_{t_k})-\partial_{\theta}
b(\theta_0,X_{t_k})\right)^{\ast}(\sigma\sigma^{\ast})^{-1}(X_{t_k})\partial_{\theta}
b(\theta_0,X_{t_k})d\ell.
\end{align*}

Using hypotheses {\bf(A2)} and {\bf(A4)}\textnormal{(b)}, \textnormal{(d)}, we get that for some constants $C, q>0$,
\begin{align*}
\sum_{k=0}^{n-1}\E\left[\vert H_{k,n}\vert \vert\widehat{\mathcal{F}}_{t_k}\right]\leq \frac{C\vert u\vert^{\epsilon+2}}{(\sqrt{n\Delta_n})^{\epsilon}}\dfrac{1}{n}\sum_{k=0}^{n-1}\left(1+\vert X_{t_k}\vert^{q}\right),
\end{align*}
which, by Lemma \ref{zero2}, implies that $H_1\overset{\P^{\theta_0}}{\longrightarrow}0$ as $n\to\infty$. Thus, so does $H_2$ by using the same argument. On the other hand, applying Lemma \ref{c3ergodic}, we obtain that as $n\to\infty$,
\begin{equation}\label{c3ergo}
\dfrac{1}{n}\sum_{k=0}^{n-1}\left(\partial_{\theta}b(\theta_0,X_{t_k})\right)^{\ast}(\sigma\sigma^{\ast})^{-1}(X_{t_k})\partial_{\theta}b(\theta_0,X_{t_k})\overset{\P^{\theta_0}}{\longrightarrow}\Gamma(\theta_0),
\end{equation}
which gives \eqref{c3l1}.

\vskip 12pt
{\it Proof of \eqref{c3l2}.} First, from the previous computations, we have that
\begin{align*}
\sum_{k=0}^{n-1}\left(\E\left[\xi_{k,n}\vert \widehat{\mathcal{F}}_{t_k}\right]\right)^2&=\dfrac{u^4}{n^2}\sum_{k=0}^{n-1}\left(\int_0^1\ell\left(\partial_{\theta}
b(\theta(\ell),X_{t_k})\right)^{\ast}(\sigma\sigma^{\ast})^{-1}(X_{t_k})\int_0^1\partial_{\theta}
b(\theta(\ell,\alpha),X_{t_k})d\alpha d\ell\right)^2\\
&\leq \dfrac{Cu^4}{n^2}\sum_{k=0}^{n-1}\left(1+\vert X_{t_k}\vert^{q}\right),
\end{align*}
for some constants $C, q>0$, which converges to zero in $\P^{\theta_0}$-probability as $n\to\infty$.

Next, using properties of the moments of the Brownian motion, we can write
\begin{align*}
\sum_{k=0}^{n-1}\E\left[\xi_{k,n}^2\vert \widehat{\mathcal{F}}_{t_k}\right]=\dfrac{u^2}{n}\sum_{k=0}^{n-1}\left(\partial_{\theta}b(\theta_0,X_{t_k})\right)^{\ast}(\sigma\sigma^{\ast})^{-1}(X_{t_k})\partial_{\theta}b(\theta_0,X_{t_k})+H_3+H_4+H_5,
\end{align*}
where, setting $\theta(\ell^{'}):=\theta_n(\ell^{'},u):=\theta_0+\frac{\ell^{'} u}{\sqrt{n\Delta_n}}$, 
\begin{align*}
&H_3=\dfrac{u^2}{n}\sum_{k=0}^{n-1}\int_0^1\left(\partial_{\theta}
b(\theta(\ell),X_{t_k})-\partial_{\theta}
b(\theta_0,X_{t_k})\right)^{\ast}(\sigma\sigma^{\ast})^{-1}(X_{t_k})\partial_{\theta}b(\theta_0,X_{t_k})d\ell,\\
&H_4=\dfrac{u^2}{n}\sum_{k=0}^{n-1}\int_0^1\int_0^1(\partial_{\theta}
b(\theta(\ell),X_{t_k}))^{\ast}(\sigma\sigma^{\ast})^{-1}(X_{t_k})\left(\partial_{\theta}
b(\theta(\ell^{'}),X_{t_k})-\partial_{\theta}
b(\theta_0,X_{t_k})\right)d\ell d\ell^{'},\\
&H_5=\dfrac{u^2\Delta_n}{n}\sum_{k=0}^{n-1}\int_0^1\int_0^1(\partial_{\theta}
b(\theta(\ell),X_{t_k}))^{\ast}(\sigma\sigma^{\ast})^{-1}(X_{t_k})\left(b(\theta_0,X_{t_k})-b(\theta(\ell),X_{t_k})\right)\\
&\qquad\qquad\times\left(b(\theta_0,X_{t_k})-b(\theta(\ell^{'}),X_{t_k})\right)^{\ast}(\sigma\sigma^{\ast})^{-1}(X_{t_k})\partial_{\theta}b(\theta(\ell^{'}),X_{t_k})d\ell d\ell^{'}.
\end{align*}
As for the term $H_1$, using hypotheses {\bf(A2)} and {\bf(A4)}\textnormal{(b)}, \textnormal{(d)}, we get that $H_3, H_4, H_5$ converge to zero in $\P^{\theta_0}$-probability as $n\to\infty$. Moreover, using again \eqref{c3ergo}, we conclude \eqref{c3l2}.

\vskip 12pt
{\it Proof of \eqref{c3l3}.} Basic computations yield
\begin{align*}
\sum_{k=0}^{n-1}\E\left[\xi_{k,n}^4\vert \widehat{\mathcal{F}}_{t_k}\right]\leq \dfrac{Cu^4}{n^2}\sum_{k=0}^{n-1}\left(1+\vert X_{t_k}\vert^q\right),
\end{align*}
for some constants $C, q>0$. The proof of Lemma \ref{main} is completed.
\end{proof}

\subsection{Negligible contributions}
\begin{lemma}\label{negligible} Under conditions {\bf(A1)}-{\bf(A8)}, as $n\to\infty$,
\begin{equation*}\begin{split}
&\dfrac{u}{\sqrt{n\Delta_n^3}}\sum_{k=0}^{n-1}\int_0^1\bigg\{Z_{k,n}^{4,\ell}+Z_{k,n}^{5,\ell}+Z_{k,n}^{6,\ell}\\
&\quad+\widetilde{\E}_{X_{t_{k}}}^{\theta(\ell)}\left[-R_{1}^{\theta(\ell),k}+R_{2}^{\theta(\ell),k}+R_{3}^{\theta(\ell),k}-R_4^{\theta(\ell),k}-R_5^{\theta(\ell),k}-R_6^{\theta(\ell),k}\Big\vert Y_{t_{k+1}}^{\theta(\ell)}=X_{t_{k+1}}\right]\bigg\} d \ell\overset{\P^{\theta_0}}{\longrightarrow} 0.
\end{split}
\end{equation*}
\end{lemma}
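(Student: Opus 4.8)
The plan is to show that each of the six families of terms inside the double integral $\int_0^1(\cdots)d\ell$ contributes zero to the limit after multiplication by the prefactor $u(n\Delta_n^3)^{-1/2}$. I would organize the proof around Lemma \ref{zero2} (and occasionally Lemma \ref{zero}): it suffices to bound, for each term $T_{k,n}$ of the decomposition, the sum of conditional expectations $\sum_{k=0}^{n-1}\E^{\theta_0}[|T_{k,n}| \,\vert\, \widehat{\mathcal{F}}_{t_k}]$ (or $\widehat{\mathcal F}_{t_k}$-conditional moments) by something of the form $C\,a_n\,\frac1n\sum_{k=0}^{n-1}(1+|X_{t_k}|^q)$ with $a_n\to 0$, and then invoke Lemma \ref{c3ergodic} to control the empirical average. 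The prefactor is of order $(n\Delta_n^3)^{-1/2}$, so I need each $T_{k,n}$ to be, on average, $o(n^{-1}\sqrt{n\Delta_n^3})=o(\sqrt{\Delta_n^3/n})$ in conditional $L^1$.

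The terms $Z_{k,n}^{4,\ell}$ and $Z_{k,n}^{5,\ell}$ are the easy ones: by {\bf(A4)}(b), {\bf(A2)} and the Lipschitz bound {\bf(A1)}, together with Lemma \ref{c3moment3}(i) applied to $X^{\theta_0}$, the conditional $L^1$-norm of $\int_{t_k}^{t_{k+1}}(b(\theta_0,X_s^{\theta_0})-b(\theta_0,X_{t_k}))ds$ is $O(\Delta_n^{3/2}(1+|X_{t_k}|^q))$ and that of the stochastic integral in $Z_{k,n}^{5,\ell}$ is $O(\Delta_n^{3/2}(1+|X_{t_k}|^q))$ as well by Burkholder–Davis–Gundy; multiplying by $\Delta_n$ gives $O(\Delta_n^{5/2})$ per term, which after the prefactor yields $O(\sqrt{\Delta_n^2})=O(\Delta_n)\to0$ times the ergodic average. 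The term $Z_{k,n}^{6,\ell}$ has conditional mean zero, so I would use Lemma \ref{zero}: the conditional second moment of $\int_{t_k}^{t_{k+1}}\int_{\R_0^d}c(X_{s-}^{\theta_0},z)\widetilde N(ds,dz)$ is $O(\Delta_n(1+|X_{t_k}|^q))$ by the isometry and {\bf(A1)}, so $\sum_k\E[(Z_{k,n}^{6,\ell})^2\vert\widehat{\mathcal F}_{t_k}]$ after the squared prefactor is $O(\Delta_n^{-3}n^{-1}\cdot n\cdot\Delta_n^2\cdot\Delta_n)=O(1)$ — this is \emph{not} small, so in fact for $Z^{6}$ I expect one must combine the Girsanov change of measure (Lemma \ref{c3Girsanov}, Lemma \ref{c3lemma1}) or exploit the large-deviation estimates (Lemma \ref{c3ordre}) rather than a crude moment bound; alternatively use that the compensated small-jump part is genuinely negligible under {\bf(A7)} while the large jumps occur on a set of probability $O(\Delta_n)$ per interval. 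This is the crux where the heavy machinery of the paper (conditioning on the jump structure, upper bounds for the conditional transition density) enters.

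For the $R^{\theta(\ell),k}$ terms, the combination $-R_1+R_2+R_3$ is handled directly by Lemma \ref{c3estimate}: it has conditional mean zero and conditional $L^p$-norm $O(\Delta_n^{(3p+1)/(2p)})$, in particular conditional $L^1$ and $L^2$ bounds of order $\Delta_n^2$ and $\Delta_n^{7/4}$ respectively, which beats the prefactor comfortably via Lemma \ref{zero}. For $R_4^{\theta(\ell),k}$ and $R_5^{\theta(\ell),k}$ the analysis mirrors $Z_{k,n}^{4,\ell}$ and $Z_{k,n}^{5,\ell}$ but now under $\widetilde{\P}^{\theta(\ell)}_{X_{t_k}}$: here the difficulty I flagged in the introduction appears — the conditional expectation is taken under $\P^{\theta(\ell)}$ conditioned on the event $\{Y_{t_{k+1}}^{\theta(\ell)}=X_{t_{k+1}}\}$, whereas the outer convergence is under $\P^{\theta_0}$. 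The plan is first to remove the conditioning on $\{Y_{t_{k+1}}^{\theta(\ell)}=X_{t_{k+1}}\}$ using the Malliavin-type/density representation and the upper bounds on the conditional transition densities (Lemmas \ref{c3lemma8}, \ref{lemma6}, and the large-deviation Lemma \ref{c3ordre}), which shows that conditioning costs at most a polynomial factor in $1+|X_{t_k}|$, and second to pass from $\P^{\theta(\ell)}$ to $\P^{\theta_0}$ via Lemma \ref{c3Girsanov} together with Lemma \ref{c3lemma1}, whose error is $O(n^{-1/2})$ times a polynomial moment. After these reductions, $R_4$ and $R_5$ reduce to the same $O(\Delta_n^{5/2})$-per-term estimate as $Z^{4},Z^{5}$, hence are negligible.

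The remaining term $R_6^{\theta(\ell),k}$ is the genuine obstacle and the reason conditions {\bf(A3)}, {\bf(A6)}, {\bf(A7)} are needed. After removing the $\{Y_{t_{k+1}}^{\theta(\ell)}=X_{t_{k+1}}\}$-conditioning and changing measure to $\P^{\theta_0}$ as above, one is left with controlling $\Delta_n(\partial_\theta b)^\ast(\sigma\sigma^\ast)^{-1}\int_{t_k}^{t_{k+1}}\int_{\R_0^d}c(X_{s-}^{\theta_0},z)\widetilde M(ds,dz)$ in conditional expectation; the martingale part averages to something of order $O(\Delta_n^{3/2})$ in $L^1$ only if one can show that conditionally on the (rare) occurrence of a jump the contribution is tamed — this is exactly what the Gaussian-versus-jump large-deviation estimate (Lemma \ref{c3ordre}) and the lower/upper bounds on the conditioned transition density deliver: on the event of no jump in $[t_k,t_{k+1}]$ (probability $1-O(\Delta_n)$) the integral vanishes, and on the complement the extra $\Delta_n$ prefactor kills the term once the polynomial moments from {\bf(A6)} are in place. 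So my overall expectation is: the $Z^{4},Z^{5}$, $-R_1+R_2+R_3$, $R_4$, $R_5$ pieces are routine once Lemmas \ref{c3moment3}, \ref{c3estimate}, \ref{c3Girsanov}, \ref{c3lemma1} and \ref{c3ergodic} are in hand, and the entire weight of the argument — and the place where {\bf(A7)} is indispensable — sits in controlling $Z^{6}$ and $R_6$, i.e. in proving that the compensated compound-Poisson increment does not deform the Gaussian scaling of the experiment.
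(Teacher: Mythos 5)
Your proposal is on the right track in spirit — you correctly read the structure (decomposition into six families, Girsanov to bridge $\P^{\theta(\ell)}$ and $\P^{\theta_0}$, ergodic averaging), and you correctly identify that $Z^{6}$ and $R_6$ carry the real weight of the argument and require the large-deviation/transition-density machinery of Lemma \ref{c3ordre}. However, your treatment of $Z^{4}$ and $Z^{5}$ as ``routine'' contains genuine errors, and this is precisely where the paper's decomposition differs from yours in an essential, not cosmetic, way.

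First, the rate computation for $Z^{4}$ is off. With $\E\bigl[\bigl|\int_{t_k}^{t_{k+1}}(b(\theta_0,X_s)-b(\theta_0,X_{t_k}))ds\bigr|\,\big|\,\widehat{\mathcal F}_{t_k}\bigr]=O(\Delta_n^{3/2}(1+|X_{t_k}|^q))$ and the extra $\Delta_n$ in the definition of $Z^{4,\ell}_{k,n}$, each term contributes $O(\Delta_n^{5/2})$ in conditional $L^1$. After multiplying by the prefactor $(n\Delta_n^3)^{-1/2}$ and summing $n$ terms, the size of the sum is $n\Delta_n^{5/2}(n\Delta_n^3)^{-1/2}=\sqrt{n\Delta_n^2}$, not $\Delta_n$. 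Since the standing assumptions are only $\Delta_n\to0$ and $n\Delta_n\to\infty$, the product $n\Delta_n^2$ is an indeterminate form and $\sqrt{n\Delta_n^2}$ need not vanish. So Lemma \ref{zero2} applied to $Z^{4,\ell}_{k,n}$ alone does not close. The paper avoids this by \emph{pairing} $Z^{4,\ell}_{k,n}$ with $\widetilde\E^{\theta(\ell)}_{X_{t_k}}[R_4^{\theta(\ell),k}\mid Y^{\theta(\ell)}_{t_{k+1}}=X_{t_{k+1}}]$ (Lemma \ref{lemma5}): these are the same object under the two close measures $\P^{\theta_0}$ and $\P^{\theta(\ell)}$ with $|\theta(\ell)-\theta_0|\le C/\sqrt{n\Delta_n}$, and their difference is controlled by $M_{k,n,1}$ (which picks up an extra factor $(n\Delta_n)^{-1/2}$ from {\bf(A4)}(e)) and $M_{k,n,2}$ (which is a ``same quantity under two measures'' difference estimated via Girsanov plus Lemma \ref{c3lemma1}, giving an extra $n^{-1/2}$). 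This cancellation is not an optional refinement; without it the term does not go to zero.

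Second, the BDG estimate you quote for $Z^{5}$ is also too generous: $\E[|\int_{t_k}^{t_{k+1}}(\sigma(X_s)-\sigma(X_{t_k}))dB_s|\mid\widehat{\mathcal F}_{t_k}]\le(\E[\int_{t_k}^{t_{k+1}}|\sigma(X_s)-\sigma(X_{t_k})|^2 ds\mid\widehat{\mathcal F}_{t_k}])^{1/2}=O(\Delta_n)$, not $O(\Delta_n^{3/2})$, so the crude $L^1$ bound gives $\sqrt{n\Delta_n}\to\infty$ — worse than for $Z^{4}$. What actually saves $Z^{5,\ell}_{k,n}$ (and the paper's Lemma \ref{lemma4} does exactly this) is that it is an $\widehat{\mathcal F}_{t_k}$-conditionally centered martingale increment, so Lemma \ref{zero} applies with condition (i) identically zero, and condition (ii) gives $\frac{1}{n\Delta_n^3}\cdot n\cdot O(\Delta_n^4)=O(\Delta_n)\to0$. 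Analogously, $R_5^{\theta(\ell),k}$ and $-R_1+R_2+R_3$ are treated via their zero conditional mean under $\widetilde\P^{\theta(\ell)}_{X_{t_k}}$ (equation \eqref{c3es1}) together with Girsanov and Lemma \ref{c3lemma1}, so the cross-measure change contributes only an $O(n^{-1/2})$ correction. In short: the pairings $Z^{4}\!\leftrightarrow\!R_4$ and $Z^{6}\!\leftrightarrow\!R_6$, and the explicit use of conditional centering for $Z^{5}$, $R_5$ and $-R_1+R_2+R_3$, are structural necessities of the proof, not just organizational choices. Your proposal would need to be rewired to incorporate them before the routine estimates actually deliver the desired $\P^{\theta_0}$-convergence to zero.
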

\begin{proof} The proof is completed by combining the five Lemmas \ref{lemma2}-\ref{lemma6} below.
\end{proof}

Consequently, from Lemmas \ref{expansion}, \ref{main} and \ref{negligible}, the proof of Theorem \ref{c3result} is now completed.

\begin{lemma}\label{lemma2} Under conditions {\bf(A1)}, {\bf(A2)}, {\bf(A4)}\textnormal{(a)}-\textnormal{(e)}, {\bf(A5)} and {\bf(A6)}, as $n\to\infty$,
\begin{equation*}
\sum_{k=0}^{n-1}\dfrac{u}{\sqrt{n\Delta_n^3}}\int_0^1\widetilde{\E}_{X_{t_k}}^{\theta(\ell)}\left[-R_{1}^{\theta(\ell),k}+R_{2}^{\theta(\ell),k}+R_{3}^{\theta(\ell),k}\Big\vert Y_{t_{k+1}}^{\theta(\ell)}=X_{t_{k+1}}\right]d\ell\overset{\P^{\theta_0}}{\longrightarrow} 0.
\end{equation*}
\end{lemma}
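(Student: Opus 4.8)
The plan is to show that the sum of conditional expectations of $-R_1^{\theta(\ell),k}+R_2^{\theta(\ell),k}+R_3^{\theta(\ell),k}$, rescaled by $u/\sqrt{n\Delta_n^3}$, converges to zero in $\P^{\theta_0}$-probability. The difficulty is that the conditional expectations $\widetilde{\E}_{X_{t_k}}^{\theta(\ell)}[\,\cdot\,|Y_{t_{k+1}}^{\theta(\ell)}=X_{t_{k+1}}]$ are taken under the law $\P^{\theta(\ell)}$ (with conditioning on the endpoint $X_{t_{k+1}}$), whereas the outer convergence is under $\P^{\theta_0}$; moreover this conditional expectation is not $\widehat{\mathcal F}_{t_k}$-measurable in a tractable way because it involves $X_{t_{k+1}}$. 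The strategy is to first use Lemma \ref{c3estimate}, which tells us that the \emph{unconditional} expectation $\E[-R_1^{\theta(\ell),k}+R_2^{\theta(\ell),k}+R_3^{\theta(\ell),k}|Y_{t_k}^{\theta(\ell)}=x]=0$ and that the $p$-th moment is bounded by $C_p\Delta_n^{(3p+1)/2}(1+|x|^q)$. The point is that conditioning on the endpoint and the drift-parameter change from $\theta(\ell)$ to $\theta_0$ are the two perturbations to be controlled, and the gain of $\Delta_n^{(3p+1)/2}$ from Lemma \ref{c3estimate} provides the slack needed to absorb them.

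The key steps, in order, would be: (1) Fix $\ell$ and set $Z_{k,n}:=\frac{u}{\sqrt{n\Delta_n^3}}\int_0^1\widetilde{\E}_{X_{t_k}}^{\theta(\ell)}[-R_1^{\theta(\ell),k}+R_2^{\theta(\ell),k}+R_3^{\theta(\ell),k}|Y_{t_{k+1}}^{\theta(\ell)}=X_{t_{k+1}}]d\ell$, which is $\widehat{\mathcal F}_{t_{k+1}}$-measurable, and aim to apply Lemma \ref{zero2}, i.e., show $\sum_{k=0}^{n-1}\E^{\theta_0}[|Z_{k,n}|\,|\widehat{\mathcal F}_{t_k}]\overset{\P^{\theta_0}}{\to}0$. (2) Apply Girsanov's theorem (Lemma \ref{c3Girsanov}) on $[t_k,t_{k+1}]$ to replace the measure $\P^{\theta_0}$ by $\widehat Q_k^{\theta_0,\theta(\ell)}$, under which $X$ has the dynamics of $Y^{\theta(\ell)}$; this converts $\E^{\theta_0}[|Z_{k,n}|\,|\widehat{\mathcal F}_{t_k}]$ into an expectation of $|Z_{k,n}|$ times the Radon--Nikodym density $\frac{d\widehat\P}{d\widehat Q_k^{\theta_0,\theta(\ell)}}$. (3) Write that density as $1+(\frac{d\widehat\P}{d\widehat Q_k^{\theta_0,\theta(\ell)}}-1)$ and split accordingly: the ``$1$'' part gives exactly $\widehat{\E}_{Q}[|Z_{k,n}|\,|X_{t_k}]$ under the law where $X$ behaves like $Y^{\theta(\ell)}$, so the conditioning on the endpoint can be removed by the tower property (conditional Jensen, $\E[|\E[W|\mathcal G]|]\le\E[|W|]$), and one is left to bound $\E[|{-R_1^{\theta(\ell),k}+R_2^{\theta(\ell),k}+R_3^{\theta(\ell),k}}|\,|Y_{t_k}^{\theta(\ell)}=X_{t_k}]\le C\Delta_n^{2}(1+|X_{t_k}|^q)$ via Lemma \ref{c3estimate} with $p=1$; the ``$-1$'' part is controlled by Lemma \ref{c3lemma1}, which yields a factor $\frac{C}{\sqrt n}(1+|X_{t_k}|^{q_0})$ times an $L^p$-norm of $|Z_{k,n}|$ under the tilted measures $\widehat\P^{\alpha}$, and the latter $L^p$-norm is again estimated by Lemma \ref{c3estimate}. (4) Collect the powers of $\Delta_n$ and $n$: each summand is bounded by $\frac{|u|}{\sqrt{n\Delta_n^3}}\cdot C\Delta_n^{2}(1+|X_{t_k}|^q)$ plus a term with an extra $\frac{1}{\sqrt n}$, i.e., by $C\frac{\sqrt{\Delta_n}}{\sqrt n}(1+|X_{t_k}|^q)$; summing over $k$ gives $C\sqrt{\Delta_n}\cdot\frac1n\sum_{k=0}^{n-1}(1+|X_{t_k}|^q)$, and since $\Delta_n\to0$ and $\frac1n\sum(1+|X_{t_k}|^q)$ is bounded in probability by Lemma \ref{c3ergodic} and {\bf(A5)}, this tends to zero in $\P^{\theta_0}$-probability. (5) Finally, Fubini/dominated convergence in $\ell$ (the bounds being uniform in $\ell\in[0,1]$) completes the argument.

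The main obstacle is step (3), specifically handling the conditioning on the endpoint $Y_{t_{k+1}}^{\theta(\ell)}=X_{t_{k+1}}$ together with the measure change. One must be careful that after Girsanov the relevant quantity is a conditional expectation of an $\widetilde{\mathcal F}$-measurable random variable given $Y_{t_{k+1}}^{\theta(\ell)}$, and that taking absolute value then expectation collapses the endpoint conditioning cleanly only because of the independence structure of $(\widehat\Omega,\widetilde\Omega)$ and the definition of $\widetilde\P_x^{\theta}$. The interplay between the inner conditional expectation $\widetilde\E_x^{\theta(\ell)}[\,\cdot\,|Y_{t_{k+1}}^{\theta(\ell)}=\cdot\,]$ and the outer Girsanov-tilted expectation needs to be organized so that Lemma \ref{c3lemma1} can be applied with $V$ taken to be (a conditional version of) the Skorohod-integral remainder; once that bookkeeping is done, the remaining estimates are routine applications of Lemma \ref{c3estimate} and Hölder's inequality, and the powers of $\Delta_n$ from Lemma \ref{c3estimate} comfortably dominate the $\Delta_n^{-3/2}$ blow-up.
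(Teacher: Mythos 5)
Your overall toolkit (Girsanov to switch to a measure under which $X$ has drift $b(\theta(\ell),\cdot)$, then split the Radon--Nikodym density as $1+(\text{density}-1)$, then Lemma~\ref{c3lemma1} plus Lemma~\ref{c3estimate}) is the right one, and is exactly what the paper uses. However, there is a genuine gap in step~(1): you apply Lemma~\ref{zero2}, which requires $\sum_k\E[|Z_{k,n}|\,|\widehat{\mathcal F}_{t_k}]\to0$, i.e.\ you take absolute values \emph{before} conditioning. Doing so destroys the only cancellation that makes the "$1$" part harmless. Under $\widehat Q_k^{\theta(\ell),\theta_0}$, the conditional expectation of $-R_1^{\theta(\ell),k}+R_2^{\theta(\ell),k}+R_3^{\theta(\ell),k}$ given $X_{t_k}$ is exactly $0$ by \eqref{c3es1}; but $\E_{\widehat Q}[|Z_{k,n}|\,|X_{t_k}]$ is not zero, and the best bound Lemma~\ref{c3estimate} provides (even generously extrapolating to $p=1$) is of order $\Delta_n^2(1+|X_{t_k}|^q)$. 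Then your step~(4) contains an arithmetic slip: $\sum_{k=0}^{n-1}\frac{|u|}{\sqrt{n\Delta_n^3}}\,\Delta_n^2(1+|X_{t_k}|^q)=|u|\sqrt{n\Delta_n}\cdot\frac1n\sum_k(1+|X_{t_k}|^q)$, not $|u|\sqrt{\Delta_n}\cdot\frac1n\sum_k(\cdots)$. Since the infinite-horizon condition gives $n\Delta_n\to\infty$, this diverges, so the absolute-value route cannot close.

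The fix is to use Lemma~\ref{zero} (not Lemma~\ref{zero2}), which only asks that $\sum_k\E[Z_{k,n}\,|\widehat{\mathcal F}_{t_k}]\to0$ and $\sum_k\E[Z_{k,n}^2\,|\widehat{\mathcal F}_{t_k}]\to0$. Keeping the sign in the first moment, the Girsanov split gives $\E^{\theta_0}[\widetilde{\E}_{X_{t_k}}^{\theta(\ell)}[\cdots]\,|\widehat{\mathcal F}_{t_k}]=\E_{\widehat Q_k^{\theta(\ell),\theta_0}}[\widetilde{\E}[\cdots]\,|X_{t_k}]+\E_{\widehat Q_k^{\theta(\ell),\theta_0}}[\widetilde{\E}[\cdots](\tfrac{d\widehat\P}{d\widehat Q}-1)\,|X_{t_k}]$, and the first term vanishes identically by \eqref{c3es1}. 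Only the second term survives, and Lemma~\ref{c3lemma1} supplies the extra factor $C/\sqrt n$, which together with $\Delta_n^{7/4}$ from \eqref{c3es2} ($p=2$, after Jensen) yields $\tfrac{|u|}{\sqrt{n\Delta_n^3}}\cdot\tfrac{C}{\sqrt n}\Delta_n^{7/4}(1+|X_{t_k}|^q)=\tfrac{C|u|\Delta_n^{1/4}}{n}(1+|X_{t_k}|^q)$; summing gives $C|u|\Delta_n^{1/4}\cdot\frac1n\sum_k(1+|X_{t_k}|^q)\to0$. The second-moment condition of Lemma~\ref{zero} is then handled by Jensen, Girsanov, Lemma~\ref{c3lemma1} and \eqref{c3es2} with $p\in\{2,4\}$, where cancellation is no longer needed because the rescaling $u^2/(n\Delta_n^3)$ together with $\Delta_n^{7/2}$ already gives $\Delta_n^{1/2}/n$ per term. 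Also note the paper uses $\widehat Q_k^{\theta(\ell),\theta_0}$ (first superscript is the target drift), whereas you wrote $\widehat Q_k^{\theta_0,\theta(\ell)}$, which in the paper's convention leaves the drift unchanged.
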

\begin{proof} 
It suffices to show that conditions (i) and (ii) of Lemma \ref{zero} hold under the measure $\P^{\theta_0}$. We start showing (i). Applying Girsanov's theorem, Lemma \ref{c3lemma1}, \eqref{c3es1}, and \eqref{c3es2} with $p=2$, we get that
\begin{equation*}\begin{split} 
&\left\vert\sum_{k=0}^{n-1}\dfrac{u}{\sqrt{n\Delta_n^3}}\int_0^1\E\left[\widetilde{\E}_{X_{t_k}}^{\theta(\ell)}
\left[-R_{1}^{\theta(\ell),k}+R_{2}^{\theta(\ell),k}+R_{3}^{\theta(\ell),k}\Big\vert Y_{t_{k+1}}^{\theta(\ell)}=
X_{t_{k+1}}\right]\Big\vert \widehat{\mathcal{F}}_{t_k}\right]d\ell\right\vert\\
&\leq\sum_{k=0}^{n-1}\dfrac{\vert u\vert}{\sqrt{n\Delta_n^3}}\int_0^1\left\vert\E_{\widehat{Q}_k^{\theta(\ell),\theta_0}}\left[(-R_{1}^{\theta(\ell),k}+R_{2}^{\theta(\ell),k}+R_{3}^{\theta(\ell),k})\left(\dfrac{d\widehat{\P}}{d \widehat{Q}_k^{\theta(\ell),\theta_0}}-1\right)\Big\vert X_{t_k}\right]\right\vert d\ell\\
&\leq \dfrac{C\vert u\vert\Delta_n^{\frac{1}{4}}}{n}\sum_{k=0}^{n-1}\left(1+\vert X_{t_k}\vert^{q}\right),
\end{split}
\end{equation*}
for some constants $C,q>0$.
Observe that \eqref{c3es2} remains valid under the measure $\widehat{\P}^{\alpha}$ defined in Lemma \ref{c3lemma1}. This shows Lemma \ref{zero}(i). Similarly, applying Jensen's inequality, Girsanov's theorem, Lemma \ref{c3lemma1}, and \eqref{c3es2} with $p \in \{2,4\}$, we obtain that
\begin{align*}
&\sum_{k=0}^{n-1}\dfrac{u^2}{n\Delta_n^3}\E\left[\left(\int_0^1\widetilde{\E}_{X_{t_k}}^{\theta(\ell)}
\left[-R_{1}^{\theta(\ell),k}+R_{2}^{\theta(\ell),k}+R_{3}^{\theta(\ell),k}\Big\vert Y_{t_{k+1}}^{\theta(\ell)}=X_{t_{k+1}}\right]d\ell\right)^2\Big\vert \widehat{\mathcal{F}}_{t_k}
\right]\\
& \leq\sum_{k=0}^{n-1}\dfrac{u^2}{n\Delta_n^3}\int_0^1\bigg\{\E_{\widehat{Q}_k^{\theta(\ell),\theta_0}}
\left[\left(-R_{1}^{\theta(\ell),k}+R_{2}^{\theta(\ell),k}+R_{3}^{\theta(\ell),k}\right)^2\Big\vert X_{t_k}\right]\\
&\qquad+\left\vert\E_{\widehat{Q}_k^{\theta(\ell),\theta_0}}\left[\left(
-R_{1}^{\theta(\ell),k}+R_{2}^{\theta(\ell),k}+R_{3}^{\theta(\ell),k}\right)^2\left(\dfrac{d\widehat{\P}}{d \widehat{Q}_k^{\theta(\ell),\theta_0}}-1\right)\Big\vert X_{t_k}\right]\right\vert\bigg\}d\ell\\
&\leq \dfrac{C u^2\Delta_n^{\frac{1}{4}}}{n}\sum_{k=0}^{n-1}\left(1+\vert X_{t_k}\vert^{q}\right),
\end{align*}
which concludes the desired result.
\end{proof}

\begin{lemma}\label{lemma3} Under conditions {\bf(A1)}, {\bf(A2)}, {\bf(A4)}\textnormal{(b)}, {\bf(A5)} and {\bf(A6)}, as $n\to\infty$,
\begin{equation*}\begin{split}
\sum_{k=0}^{n-1}\dfrac{u}{\sqrt{n\Delta_n^3}}\int_0^1\widetilde{\E}_{X_{t_{k}}}^{\theta(\ell)}\left[R_5^{\theta(\ell),k}\Big\vert Y_{t_{k+1}}^{\theta(\ell)}=X_{t_{k+1}}\right]d\ell\overset{\P^{\theta_0}}{\longrightarrow} 0.
\end{split}
\end{equation*}
\end{lemma}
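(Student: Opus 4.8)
The plan is to verify conditions (i) and (ii) of Lemma \ref{zero} for the array $Z_{k,n}=\frac{u}{\sqrt{n\Delta_n^3}}\int_0^1\widetilde{\E}_{X_{t_k}}^{\theta(\ell)}[R_5^{\theta(\ell),k}\,|\,Y_{t_{k+1}}^{\theta(\ell)}=X_{t_{k+1}}]\,d\ell$, working under $\P^{\theta_0}$. The first and main obstacle is that the conditional expectation $\widetilde{\E}_{X_{t_k}}^{\theta(\ell)}[\,\cdot\,|\,Y_{t_{k+1}}^{\theta(\ell)}=X_{t_{k+1}}]$ is computed under the law $\P^{\theta(\ell)}$ of the auxiliary flow, whereas the outer conditional expectation $\E^{\theta_0}[\,\cdot\,|\,\widehat{\mathcal{F}}_{t_k}]$ is taken under $\P^{\theta_0}$; since $\theta(\ell)\neq\theta_0$ these measures differ and we cannot directly bound the $\widehat{\mathcal{F}}_{t_k}$-conditional moments. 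The standard device here, exactly as in Lemma \ref{lemma2}, is to apply Girsanov's theorem (Lemma \ref{c3Girsanov}) to rewrite $\E[\,\cdot\,|\,\widehat{\mathcal{F}}_{t_k}]$ as an expectation under $\widehat{Q}_k^{\theta(\ell),\theta_0}$ against the Radon--Nikodym density $d\widehat{\P}/d\widehat{Q}_k^{\theta(\ell),\theta_0}$, split this density as $1+(d\widehat{\P}/d\widehat{Q}_k^{\theta(\ell),\theta_0}-1)$, and estimate the ``$1$'' part by a direct moment bound on $R_5^{\theta(\ell),k}$ and the deviation part by Lemma \ref{c3lemma1}.

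For condition (i), after the Girsanov change the leading term is $\E_{\widehat{Q}_k^{\theta(\ell),\theta_0}}[R_5^{\theta(\ell),k}\,|\,X_{t_k}]$. The key structural feature of $R_5^{\theta,k}=\Delta_n(\partial_\theta b(\theta,Y_{t_k}^\theta))^\ast(\sigma\sigma^\ast)^{-1}(Y_{t_k}^\theta)\int_{t_k}^{t_{k+1}}(\sigma(Y_s^\theta)-\sigma(Y_{t_k}^\theta))\,dW_s$ is that it is $\Delta_n$ times a stochastic integral against $W$ whose integrand vanishes at $s=t_k$; under $\widehat{Q}_k^{\theta(\ell),\theta_0}$ the driving Brownian motion is only shifted by a drift of order $\Delta_n$, so up to controlled corrections the conditional mean of the stochastic integral is essentially zero and its $L^p$-norm is governed by the $L^2$-modulus $\E[|\sigma(Y_s^{\theta(\ell)})-\sigma(Y_{t_k}^{\theta(\ell)})|^p]\lesssim \Delta_n^{p/2}(1+|x|^q)$ via Lipschitz continuity of $\sigma$ (hypothesis {\bf(A2)}, {\bf(A1)}) and Lemma \ref{c3moment3}(i). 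Combining the extra $\Delta_n$ prefactor, an $L^2$-norm of order $\Delta_n^{1/2}\cdot\Delta_n^{1/2}=\Delta_n$ from the Burkholder--Davis--Gundy inequality applied to the stochastic integral (using {\bf(A4)(b)} for the $\partial_\theta b$ factor), together with the Jensen/conditional-expectation contraction and the deviation bound $\frac{C}{\sqrt n}(1+|X_{t_k}|^{q_0})$ from Lemma \ref{c3lemma1}, one gets
\begin{equation*}
\left|\sum_{k=0}^{n-1}\E\!\left[Z_{k,n}\,\big|\,\widehat{\mathcal{F}}_{t_k}\right]\right|\leq \frac{C|u|\,\Delta_n^{\gamma}}{n}\sum_{k=0}^{n-1}\bigl(1+|X_{t_k}|^{q}\bigr)
\end{equation*}
for some $\gamma>0$ (the bookkeeping gives the prefactor $\frac{1}{\sqrt{n\Delta_n^3}}\cdot\Delta_n\cdot\Delta_n$ times the $\frac1{\sqrt n}$ from the density deviation, i.e.\ an overall $\frac{\Delta_n^{1/2}}{n}$, or a larger power of $\Delta_n$ if one is more careful), and then Lemma \ref{c3ergodic} together with {\bf(A5)} makes the right-hand side converge to $0$ in $\P^{\theta_0}$-probability.

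For condition (ii) the argument is parallel: by Jensen's inequality (to move the square inside the $d\ell$ integral and inside the conditional expectation defining $\widetilde{\E}_{X_{t_k}}^{\theta(\ell)}$) and Girsanov's theorem one bounds $\sum_k\E[Z_{k,n}^2\,|\,\widehat{\mathcal{F}}_{t_k}]$ by $\frac{u^2}{n\Delta_n^3}\sum_k\int_0^1\{\E_{\widehat{Q}_k^{\theta(\ell),\theta_0}}[(R_5^{\theta(\ell),k})^2\,|\,X_{t_k}]+|\E_{\widehat{Q}_k^{\theta(\ell),\theta_0}}[(R_5^{\theta(\ell),k})^2(d\widehat{\P}/d\widehat{Q}_k^{\theta(\ell),\theta_0}-1)\,|\,X_{t_k}]|\}d\ell$, apply Lemma \ref{c3lemma1} with $p\in\{2,4\}$ to the deviation piece, and use the moment estimate $\E[(R_5^{\theta,k})^2\,|\,Y_{t_k}^\theta=x]\lesssim \Delta_n^2\cdot\Delta_n\cdot(1+|x|^q)=\Delta_n^{3}(1+|x|^q)$ (again BDG plus Lipschitz continuity of $\sigma$ and {\bf(A4)(b)}, {\bf(A6)}, Lemma \ref{c3moment3}). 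This yields a bound of the form $\frac{Cu^2\Delta_n^{\gamma'}}{n}\sum_{k=0}^{n-1}(1+|X_{t_k}|^q)$ with $\gamma'>0$ (here the crucial cancellation is $\Delta_n^{-3}\cdot\Delta_n^{3}=1$, and the genuine smallness comes either from the extra $\frac{1}{\sqrt n}$ of the density deviation or from a sharper analysis of the $L^2$-modulus of continuity of $\sigma$ along the flow), which converges to $0$ in $\P^{\theta_0}$-probability by Lemma \ref{c3ergodic}. Then Lemma \ref{zero} gives the claim. The main technical point to get right is the precise power of $\Delta_n$ produced by the BDG estimate on $R_5^{\theta,k}$ combined with the Girsanov deviation, so that the prefactor $(n\Delta_n^3)^{-1/2}$ (resp.\ $(n\Delta_n^3)^{-1}$) is beaten; the fact that $\sigma$ does not depend on $\theta$ is what makes $R_5^{\theta,k}$ a pure stochastic-integral term with a vanishing-at-$t_k$ integrand and hence small enough.
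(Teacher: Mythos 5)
Your approach is exactly the one the paper intends: the paper's own proof of Lemma~\ref{lemma3} is the single line ``We proceed similarly as in the proof of Lemma~\ref{lemma2},'' i.e.\ Girsanov's theorem (Lemma~\ref{c3Girsanov}), the deviation bound of Lemma~\ref{c3lemma1}, a moment estimate on $R_5^{\theta,k}$ via the It\^o isometry/BDG inequality and Lemma~\ref{c3moment3}(i), and then Lemma~\ref{zero}. Your treatment of condition~(i)---split $d\widehat{\P}/d\widehat{Q}_k^{\theta(\ell),\theta_0}$ as $1+(d\widehat{\P}/d\widehat{Q}_k-1)$, observe that the ``$1$'' piece vanishes because $R_5^{\theta(\ell),k}$ is a stochastic integral with mean zero under $\widetilde{\P}_{X_{t_k}}^{\theta(\ell)}$, and bound the deviation piece by Lemma~\ref{c3lemma1}---is precisely right and your order bookkeeping $\Delta_n^{1/2}/n$ for~(i) is correct.

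There is, however, a genuine slip in your moment estimate for condition~(ii) that you flag but do not resolve. You write $\E[(R_5^{\theta,k})^2\mid Y_{t_k}^{\theta}=x]\lesssim\Delta_n^2\cdot\Delta_n\cdot(1+|x|^q)=\Delta_n^3(1+|x|^q)$, which upon dividing by $n\Delta_n^3$ leaves a term of exact order $1/n$ that does \emph{not} tend to zero (and, for the part of the Girsanov split coming from the ``$1$'', there is no extra $1/\sqrt{n}$ to save you). The correct estimate is one power of $\Delta_n$ better. Indeed, by the It\^o isometry and the Lipschitz property of $\sigma$ together with Lemma~\ref{c3moment3}(i),
\begin{equation*}
\E\left[\left|\int_{t_k}^{t_{k+1}}\bigl(\sigma(Y_s^{\theta})-\sigma(Y_{t_k}^{\theta})\bigr)dW_s\right|^2\Big\vert Y_{t_k}^{\theta}=x\right]
\leq L^2\int_{t_k}^{t_{k+1}}\E\bigl[|Y_s^{\theta}-Y_{t_k}^{\theta}|^2\mid Y_{t_k}^{\theta}=x\bigr]\,ds
\leq C\Delta_n^2\bigl(1+|x|^2\bigr),
\end{equation*}
so with the extra $\Delta_n^2$ from the prefactor $\Delta_n(\partial_\theta b)^\ast(\sigma\sigma^\ast)^{-1}$ (using {\bf(A2)} and {\bf(A4)}(b)) one gets $\E[(R_5^{\theta,k})^2\mid Y_{t_k}^{\theta}=x]\leq C\Delta_n^4(1+|x|^q)$. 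Then $\frac{u^2}{n\Delta_n^3}\cdot\Delta_n^4=\frac{u^2\Delta_n}{n}$, which tends to zero after applying Lemma~\ref{c3ergodic}, and the deviation piece with Lemma~\ref{c3lemma1} is even smaller. The ``sharper analysis of the $L^2$-modulus of continuity of $\sigma$ along the flow'' that you allude to as one of two possible fixes is exactly the point: it is not optional, and the alternative fix (the $1/\sqrt{n}$ from the density deviation) does not by itself suffice because the ``$1$'' part of the Girsanov split carries no such factor. Once you replace $\Delta_n^3$ by $\Delta_n^4$, your argument closes cleanly.
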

\begin{proof}
We proceed similarly as in the proof of Lemma \ref{lemma2}.
\end{proof}

\begin{lemma}\label{lemma4} Under conditions {\bf(A1)}, {\bf(A2)}, {\bf(A4)}\textnormal{(b)}, {\bf(A5)} and {\bf(A6)}, as $n\to\infty$, 
\begin{equation*}
\sum_{k=0}^{n-1}\dfrac{u}{\sqrt{n\Delta_n^3}}\int_0^1Z^{5,\ell}_{k,n}d\ell\overset{\P^{\theta_0}}{\longrightarrow} 0.
\end{equation*}
\end{lemma}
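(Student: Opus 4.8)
The plan is to apply Lemma \ref{zero} directly, conditioning with respect to $\{\widehat{\mathcal{F}}_{t_k}\}$. Using that $\Delta_n/\sqrt{n\Delta_n^3}=1/\sqrt{n\Delta_n}$ and that the stochastic integral in $Z_{k,n}^{5,\ell}$ does not depend on $\ell$, set
$$
Z_{k,n}:=\dfrac{u}{\sqrt{n\Delta_n^3}}\int_0^1 Z_{k,n}^{5,\ell}\,d\ell=\dfrac{u}{\sqrt{n\Delta_n}}\,a_k\,I_k,\qquad a_k:=\int_0^1\left(\partial_{\theta}b(\theta(\ell),X_{t_k})\right)^{\ast}(\sigma\sigma^{\ast})^{-1}(X_{t_k})\,d\ell,
$$
where $I_k:=\int_{t_k}^{t_{k+1}}\left(\sigma(X_{s}^{\theta_0})-\sigma(X_{t_k})\right)dB_s$. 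Unlike in Lemmas \ref{lemma2}--\ref{lemma3}, only the observed process $X^{\theta_0}$ and the driving Brownian motion $B$ appear here, so no Girsanov change of measure is needed.

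First I would verify condition \textnormal{(i)} of Lemma \ref{zero}, which is immediate: the row vector $a_k$ is $\widehat{\mathcal{F}}_{t_k}$-measurable, the integrand of $I_k$ is $\{\widehat{\mathcal{F}}_s\}$-adapted and, by {\bf(A1)} and Lemma \ref{c3moment3}, square integrable, so $I_k$ is an $\{\widehat{\mathcal{F}}_t\}$-martingale increment and hence $\E[Z_{k,n}\vert\widehat{\mathcal{F}}_{t_k}]=0$ for every $k$. For condition \textnormal{(ii)}, I would estimate the conditional second moments. By Cauchy--Schwarz, $Z_{k,n}^2\leq\dfrac{u^2}{n\Delta_n}\vert a_k\vert^2\vert I_k\vert^2$, and {\bf(A2)} together with {\bf(A4)}\textnormal{(b)} gives $\vert a_k\vert\leq C(1+\vert X_{t_k}\vert^{q})$ for some $C,q>0$. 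By the It\^o isometry, the Lipschitz bound on $\sigma$ in {\bf(A1)}, and Lemma \ref{c3moment3}\textnormal{(i)} with $p=2$,
$$
\E\left[\vert I_k\vert^2\big\vert\widehat{\mathcal{F}}_{t_k}\right]=\int_{t_k}^{t_{k+1}}\E\left[\left\vert\sigma(X_s^{\theta_0})-\sigma(X_{t_k})\right\vert^2\big\vert\widehat{\mathcal{F}}_{t_k}\right]ds\leq C\int_{t_k}^{t_{k+1}}(s-t_k)\left(1+\vert X_{t_k}\vert^{2}\right)ds\leq C\Delta_n^2\left(1+\vert X_{t_k}\vert^{2}\right).
$$

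Since $a_k$ is $\widehat{\mathcal{F}}_{t_k}$-measurable, combining these bounds yields $\E[Z_{k,n}^2\vert\widehat{\mathcal{F}}_{t_k}]\leq\dfrac{Cu^2\Delta_n}{n}\left(1+\vert X_{t_k}\vert^{q}\right)$ for some $C,q>0$, so that
$$
\sum_{k=0}^{n-1}\E\left[Z_{k,n}^2\big\vert\widehat{\mathcal{F}}_{t_k}\right]\leq Cu^2\Delta_n\cdot\frac1n\sum_{k=0}^{n-1}\left(1+\vert X_{t_k}\vert^{q}\right)\overset{\P^{\theta_0}}{\longrightarrow}0\qquad\text{as }n\to\infty,
$$
because $\Delta_n\to0$ while, by {\bf(A5)} and Lemma \ref{c3ergodic}, the empirical mean converges in probability to $\int_{\R^d}(1+\vert x\vert^{q})\pi_{\theta_0}(dx)<\infty$. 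Lemma \ref{zero} then gives the claim.

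I do not expect a serious obstacle in this lemma. The only subtlety worth noting is that a crude bound on $\E[\vert Z_{k,n}\vert\,\vert\widehat{\mathcal{F}}_{t_k}]$ feeding into Lemma \ref{zero2} would produce a factor $\sqrt{n\Delta_n}\to\infty$ and fail; one must instead exploit that $Z_{k,n}$ is a martingale increment (so condition \textnormal{(i)} holds with limit $0$) and that the regularizing difference $\sigma(X_s^{\theta_0})-\sigma(X_{t_k})$ contributes the extra factor $\Delta_n$ in the conditional second moment, which is exactly what compensates the normalization $1/(n\Delta_n)$ so that ergodicity can annihilate the sum.
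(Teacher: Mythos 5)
Your proposal is correct and takes essentially the same route as the paper: you verify conditions (i) and (ii) of Lemma \ref{zero}, observing that $I_k$ is a martingale increment so condition (i) holds with limit $0$, and then bounding the conditional second moment via It\^o isometry, the Lipschitz property of $\sigma$, and Lemma \ref{c3moment3}(i), arriving at the same estimate $\frac{Cu^2\Delta_n}{n}\sum_{k=0}^{n-1}(1+\vert X_{t_k}\vert^q)$ that the paper uses. The paper's proof is terser but relies on exactly the same ingredients, and your remark about why Lemma \ref{zero2} would be too crude here is an accurate observation, not a gap.
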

\begin{proof} 
Clearly, for all $n\geq 1$,
$$
\sum_{k=0}^{n-1}\dfrac{u}{\sqrt{n\Delta_n^3}}\int_0^1\E\left[Z^{5,\ell}_{k,n}\big\vert \widehat{\mathcal{F}}_{t_k}\right] d\ell=0,
$$
and by Lemma \ref{c3moment3}(i),
$$
\sum_{k=0}^{n-1}\dfrac{u^2}{n\Delta_n^3}\E\left[\left(\int_0^1Z^{5,\ell}_{k,n}d\ell\right)^2\Big\vert \widehat{\mathcal{F}}_{t_k}
\right]\leq \frac{Cu^2\Delta_n}{n}\sum_{k=0}^{n-1}\left(1+\vert X_{t_k}\vert^{q}\right),
$$
for some constants $C, q>0$. Thus, Lemma \ref{zero} concludes the desired result.
\end{proof}

\begin{lemma}\label{lemma5} Assume conditions {\bf(A1)}, {\bf(A2)}, {\bf(A4)}\textnormal{(b)}, \textnormal{(e)}, {\bf(A5)} and {\bf(A6)}. Then as $n\to\infty$, 
\begin{equation*}\begin{split}
&\sum_{k=0}^{n-1}\dfrac{u}{\sqrt{n\Delta_n^3}}\int_0^1\left(Z_{k,n}^{4,\ell}-\widetilde{\E}_{X_{t_k}}^{\theta(\ell)}\left[R_4^{\theta(\ell),k}\Big\vert Y_{t_{k+1}}^{\theta(\ell)}=X_{t_{k+1}}\right]\right)d\ell\overset{\P^{\theta_0}}{\longrightarrow} 0.
\end{split}
\end{equation*}
\end{lemma}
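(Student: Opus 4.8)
The plan is to show that $Z_{k,n}^{4,\ell}$ and $\widetilde{\E}_{X_{t_k}}^{\theta(\ell)}[R_4^{\theta(\ell),k}\,|\,Y_{t_{k+1}}^{\theta(\ell)}=X_{t_{k+1}}]$ are close enough that their difference, summed up and scaled by $u/\sqrt{n\Delta_n^3}$, vanishes in $\P^{\theta_0}$-probability. Recall that
$$
Z_{k,n}^{4,\ell}=\Delta_n\bigl(\partial_{\theta}b(\theta(\ell),X_{t_k})\bigr)^{\ast}(\sigma\sigma^{\ast})^{-1}(X_{t_k})\int_{t_k}^{t_{k+1}}\bigl(b(\theta_0,X_{s}^{\theta_0})-b(\theta_0,X_{t_k})\bigr)ds,
$$
while $R_4^{\theta(\ell),k}$ has the same structure but with the Brownian flow $Y^{\theta(\ell)}$ in place of the observed process $X^{\theta_0}$ and with drift parameter $\theta(\ell)$ instead of $\theta_0$ inside the integrand $b(\cdot,\cdot)-b(\cdot,Y_{t_k}^{\theta(\ell)})$. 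I will apply Lemma \ref{zero}, so it suffices to verify conditions (i) and (ii) of that lemma for the triangular array $Z_{k,n}:=\frac{u}{\sqrt{n\Delta_n^3}}\int_0^1(Z_{k,n}^{4,\ell}-\widetilde{\E}_{X_{t_k}}^{\theta(\ell)}[R_4^{\theta(\ell),k}|Y_{t_{k+1}}^{\theta(\ell)}=X_{t_{k+1}}])d\ell$, working throughout under $\P^{\theta_0}$.

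For condition (i) I would take the $\widehat{\mathcal{F}}_{t_k}$-conditional expectation. The $Z_{k,n}^{4,\ell}$ term is $\widehat{\mathcal{F}}_{t_k}$-measurable modulo the increment of $X^{\theta_0}$ on $[t_k,t_{k+1}]$, so one can write $\E[Z_{k,n}^{4,\ell}|\widehat{\mathcal{F}}_{t_k}]$ as the prefactor times $\E[\int_{t_k}^{t_{k+1}}(b(\theta_0,X_s^{\theta_0})-b(\theta_0,X_{t_k}))ds\,|\,\widehat{\mathcal{F}}_{t_k}]$; by the Lipschitz bound {\bf(A1)} together with Lemma \ref{c3moment3}(i), the inner conditional expectation is $O(\Delta_n^{3/2}(1+|X_{t_k}|^q))$. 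For the $R_4$ term, I use Girsanov's theorem (Lemma \ref{c3Girsanov}) to pass from the conditional law of $Y^{\theta(\ell)}$ under $\widetilde{\P}_{X_{t_k}}^{\theta(\ell)}$ given the endpoint $X_{t_{k+1}}$ to an expectation under the observed measure, controlling the change-of-measure density via Lemma \ref{c3lemma1}; this produces a main term plus a $\frac{C}{\sqrt n}(1+|X_{t_k}|^{q_0})$-weighted correction. The main term is handled by a second-order Taylor/mean-value argument in both arguments of $b$: replacing $b(\theta(\ell),Y_s^{\theta})-b(\theta(\ell),Y_{t_k}^{\theta})$ by its leading behaviour, using {\bf(A4)}(b),(e) and Lemma \ref{c3moment3}, gives $\E[R_4^{\theta(\ell),k}\,|\,\cdots]=O(\Delta_n^{3/2}(1+|X_{t_k}|^q))$ as well; since $\theta(\ell)-\theta_0=O(1/\sqrt{n\Delta_n})$, replacing $\theta(\ell)$ by $\theta_0$ in the prefactors costs only lower-order terms. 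Hence $|\E[Z_{k,n}|\widehat{\mathcal{F}}_{t_k}]|\le \frac{C|u|}{\sqrt{n\Delta_n^3}}\Delta_n^{3/2}(1+|X_{t_k}|^q)=\frac{C|u|}{\sqrt n}\cdot\frac1n(1+|X_{t_k}|^q)$ after summing — wait, more carefully: summing over $k$ gives $\frac{C|u|}{\sqrt n}\cdot\frac1n\sum_{k=0}^{n-1}(1+|X_{t_k}|^q)$, which tends to $0$ in $\P^{\theta_0}$-probability by Lemma \ref{c3ergodic}. For condition (ii), I would bound $Z_{k,n}^2$ by Jensen's inequality (to pull the $\int_0^1 d\ell$ inside), then estimate $\E[(Z_{k,n}^{4,\ell})^2|\widehat{\mathcal{F}}_{t_k}]$ and $\E[(\widetilde{\E}[R_4^{\theta(\ell),k}|\cdots])^2|\widehat{\mathcal{F}}_{t_k}]$ separately; Cauchy--Schwarz, {\bf(A1)}, {\bf(A4)}(b), Lemma \ref{c3moment3}, and (for the $R_4$ piece) the $L^p$-version of the Girsanov estimate in Lemma \ref{c3lemma1} with $p\in\{2,4\}$ give each of these as $O(\Delta_n^{2}(1+|X_{t_k}|^q))\cdot\Delta_n$ — roughly $\Delta_n^{3}(1+|X_{t_k}|^q)$ after accounting for the $\Delta_n$ prefactors in the definition and the $ds$ integral — so that $\sum_k\E[Z_{k,n}^2|\widehat{\mathcal{F}}_{t_k}]\le \frac{Cu^2}{n\Delta_n^3}\cdot n\Delta_n^{3}\cdot\frac1n\sum_k(1+|X_{t_k}|^q)\cdot\Delta_n\to0$ by Lemma \ref{c3ergodic} and $\Delta_n\to0$.

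The main obstacle, as in the preceding Lemmas \ref{lemma2}--\ref{lemma4}, is the bookkeeping around the measure change: the conditional expectations defining $R_4$ are computed under $\widetilde{\P}_{X_{t_k}}^{\theta(\ell)}$ (the Malliavin-side measure with the shifted parameter and conditioned on the endpoint), whereas the convergence is under $\P^{\theta_0}$, and one must quantify the discrepancy uniformly in $\ell\in[0,1]$ and $k$. This is exactly what Lemma \ref{c3lemma1} is built to handle, and the key is that each application of it introduces a factor $1/\sqrt n$, which combined with the intrinsic $\Delta_n^{3/2}$-smallness of the difference $Z^{4,\ell}_{k,n}-R_4^{\theta(\ell),k}$ (itself coming from the Lipschitz increment estimate in Lemma \ref{c3moment3}(i) applied to $b(\cdot,X_s)-b(\cdot,X_{t_k})$) beats the $1/\sqrt{n\Delta_n^3}$ normalization. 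A secondary technical point is that $b$ is not bounded, only of linear growth, so every estimate carries a polynomial weight $(1+|X_{t_k}|^q)$ which is then absorbed by the ergodic average via Lemma \ref{c3ergodic} and hypothesis {\bf(A5)}; one must make sure the Girsanov corrections in Lemma \ref{c3lemma1} remain valid under the tilted measures $\widehat{\P}^\alpha$, as already noted in the proof of Lemma \ref{lemma2}. No genuinely new idea beyond the techniques of Lemmas \ref{lemma2}--\ref{lemma4} is needed; the proof is a combination of the Girsanov-deviation estimate, the flow moment bounds, and the discrete ergodic theorem.
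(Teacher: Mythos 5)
The proposal has a genuine gap in the treatment of condition (i) of Lemma \ref{zero}, and the accompanying arithmetic is wrong in a way that conceals the problem. You write that, after taking the $\widehat{\mathcal{F}}_{t_k}$-conditional expectation, the inner conditional expectations of both $Z_{k,n}^{4,\ell}$ and (the Girsanov main part of) $R_4^{\theta(\ell),k}$ are each $O(\Delta_n^{3/2}(1+|X_{t_k}|^q))$, and that this already "beats the $1/\sqrt{n\Delta_n^3}$ normalization." Restoring the $\Delta_n$ prefactor built into $Z_{k,n}^{4,\ell}$ and $R_4^{\theta(\ell),k}$, each conditional expectation is really $O(\Delta_n^{5/2}(1+|X_{t_k}|^q))$. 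If one bounds the two terms separately, the contribution after normalization and summation is of order
$\frac{1}{\sqrt{n\Delta_n^3}}\cdot n\cdot\Delta_n^{5/2}\cdot\frac{1}{n}\sum_k(1+|X_{t_k}|^q)=\sqrt{n\Delta_n^2}\cdot\frac{1}{n}\sum_k(1+|X_{t_k}|^q)$,
and $\sqrt{n\Delta_n^2}$ does \emph{not} tend to zero under the paper's only assumptions $\Delta_n\to0$, $n\Delta_n\to\infty$ (take $\Delta_n=n^{-\beta}$ with $0<\beta<1/2$). Your display "$\frac{C|u|}{\sqrt{n\Delta_n^3}}\Delta_n^{3/2}(1+|X_{t_k}|^q)=\frac{C|u|}{\sqrt n}\cdot\frac1n(1+|X_{t_k}|^q)$" contains a spurious $1/n$: the left side equals $\frac{C|u|}{\sqrt n}(1+|X_{t_k}|^q)$, and summing $n$ such terms gives $C|u|\sqrt{n}\cdot\frac1n\sum_k(\cdot)$, which diverges. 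In short, separate estimates of the two pieces are not good enough.

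What makes the lemma true is the \emph{cancellation} of the two pieces, and the paper's proof is built to expose it. The paper applies the mean value theorem to write $Z_{k,n}^{4,\ell}-\widetilde{\E}_{X_{t_k}}^{\theta(\ell)}[R_4^{\theta(\ell),k}\,|\,Y_{t_{k+1}}^{\theta(\ell)}=X_{t_{k+1}}]$ as $\Delta_n(\partial_\theta b(\theta(\ell),X_{t_k}))^*(\sigma\sigma^*)^{-1}(X_{t_k})(M_{k,n,1}+M_{k,n,2})$, where $M_{k,n,1}$ collects the $\theta_0$-vs-$\theta(\ell)$ parameter discrepancy (gaining an explicit factor $\ell u/\sqrt{n\Delta_n}$ via {\bf(A4)}(e)) and $M_{k,n,2}$ is the same $\theta(\ell)$-increment written once for $X^{\theta_0}$ and once as the conditional expectation over the flow $Y^{\theta(\ell)}$ — after Girsanov's change of measure (Lemma \ref{c3Girsanov}) these two main terms are \emph{identical in law}, so what survives is precisely a $(\tfrac{d\widehat{\P}}{d\widehat{Q}_k^{\theta(\ell),\theta_0}}-1)$-weighted quantity which Lemma \ref{c3lemma1} bounds by a factor $1/\sqrt n$. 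Only with one of these two extra small factors does the $\Delta_n^{3/2}$-size of the increment, multiplied by $\Delta_n/\sqrt{n\Delta_n^3}=1/\sqrt{n\Delta_n}$ and summed over $n$ terms, produce $O(\sqrt{\Delta_n})$ or $O(\Delta_n)$, which is $o(1)$. You mention Girsanov and Lemma \ref{c3lemma1} and gesture at a Taylor argument in both arguments of $b$, so the ingredients are on the table, but your presentation never isolates the exact cancellation that makes the two main Girsanov terms coincide, and your claimed bounds are those for the separate terms, not for their difference; as written, the argument does not close.
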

\begin{proof}
By the mean value theorem for vector-valued functions,
\begin{align*}
Z_{k,n}^{4,\ell}-\widetilde{\E}_{X_{t_k}}^{\theta(\ell)}\left[R_4^{\theta(\ell),k}\Big\vert Y_{t_{k+1}}^{\theta(\ell)}=X_{t_{k+1}}\right]=\Delta_n\left(\partial_{\theta}
b(\theta(\ell),X_{t_k})\right)^{\ast}(\sigma\sigma^{\ast})^{-1}(X_{t_k})\left(M_{k,n,1}+M_{k,n,2}\right),
\end{align*}
where
\begin{align*}
M_{k,n,1}&=-\dfrac{\ell u}{\sqrt{n\Delta_n}}\int_{t_k}^{t_{k+1}}\int_{0}^{1}\left(\partial_{\theta}b(\theta_0+\frac{\alpha\ell u}{\sqrt{n\Delta_n}},X_{s}^{\theta_0})-\partial_{\theta}b(\theta_0+\frac{\alpha\ell u}{\sqrt{n\Delta_n}},X_{t_k})\right)d\alpha ds,\\
M_{k,n,2}&=\int_{t_k}^{t_{k+1}}\left(b(\theta(\ell),X_{s}^{\theta_0})-b(\theta(\ell),X_{t_k})\right)ds\\
&\qquad-\widetilde{\E}_{X_{t_k}}^{\theta(\ell)}\left[\int_{t_k}^{t_{k+1}}\left(b(\theta(\ell),Y_{s}^{\theta(\ell)})-b(\theta(\ell),Y_{t_k}^{\theta(\ell)})\right)ds\Big\vert Y_{t_{k+1}}^{\theta(\ell)}=X_{t_{k+1}}\right].
\end{align*}

Using {\bf(A2)}, {\bf(A4)}\textnormal{(b)}, \textnormal{(e)} and Lemma \ref{c3moment3}(i), we get
that
\begin{align*}
\sum_{k=0}^{n-1}\dfrac{\vert u\vert}{\sqrt{n\Delta_n}}\E\left[\left\vert\int_0^1 \left(\partial_{\theta}
b(\theta(\ell),X_{t_k})\right)^{\ast}(\sigma\sigma^{\ast})^{-1}(X_{t_k})M_{k,n,1}d\ell\right\vert\bigg \vert  \widehat{\mathcal{F}}_{t_k}\right]\leq \dfrac{C u^2\sqrt{\Delta_n}}{n}\sum_{k=0}^{n-1}\left(1+\vert X_{t_k}\vert^{q}\right),
\end{align*}
for some constants $C, q>0$. Therefore, by Lemma \ref{zero2}, we conclude that as $n\to\infty$, 
\begin{align*}
\sum_{k=0}^{n-1}\dfrac{u}{\sqrt{n\Delta_n}}\int_0^1\left(\partial_{\theta}
b(\theta(\ell),X_{t_k})\right)^{\ast}(\sigma\sigma^{\ast})^{-1}(X_{t_k})M_{k,n,1}d\ell\overset{\P^{\theta_0}}{\longrightarrow} 0.
\end{align*}

We next show that as $n\to\infty$,
\begin{equation*}\begin{split}
\sum_{k=0}^{n-1}\dfrac{u}{\sqrt{n\Delta_n}}\int_0^1\left(\partial_{\theta}
b(\theta(\ell),X_{t_k})\right)^{\ast}(\sigma\sigma^{\ast})^{-1}(X_{t_k})M_{k,n,2}d\ell\overset{\P^{\theta_0}}{\longrightarrow} 0.
\end{split}
\end{equation*}

Using Girsanov's theorem, the Lipschitz condition on $b$, {\bf(A2)}, {\bf(A4)}\textnormal{(b)}, and Lemmas \ref{c3lemma1} and \ref{c3moment3}(i), we obtain that 
\begin{align*}
&\left\vert\sum_{k=0}^{n-1}\dfrac{u}{\sqrt{n\Delta_n}}\int_0^1\left(\partial_{\theta}
b(\theta(\ell),X_{t_k})\right)^{\ast}(\sigma\sigma^{\ast})^{-1}(X_{t_k})\E\left[M_{k,n,2}\vert \widehat{\mathcal{F}}_{t_k}\right]d\ell\right\vert\\
&=\bigg\vert\sum_{k=0}^{n-1}\dfrac{u}{\sqrt{n\Delta_n}}\int_0^1\left(\partial_{\theta}
b(\theta(\ell),X_{t_k})\right)^{\ast}(\sigma\sigma^{\ast})^{-1}(X_{t_k})\\
&\qquad\times\bigg\{\int_{t_k}^{t_{k+1}}\E_{\widehat{Q}_k^{\theta(\ell),\theta_0}}\left[\left(b(\theta(\ell),X_{s}^{\theta(\ell)})-b(\theta(\ell),X_{t_k}^{\theta(\ell)})\right)\left(\dfrac{d\widehat{\P}}{d \widehat{Q}_k^{\theta(\ell),\theta_0}}-1\right)\Big\vert X_{t_k}\right]ds\\
&\qquad-\E_{\widehat{Q}_k^{\theta(\ell),\theta_0}}\left[\int_{t_k}^{t_{k+1}}\left(b(\theta(\ell),Y_{s}^{\theta(\ell)})-b(\theta(\ell),Y_{t_k}^{\theta(\ell)})\right)ds \left(\dfrac{d\widehat{\P}}{d \widehat{Q}_k^{\theta(\ell),\theta_0}}-1\right)\Big\vert X_{t_k}\right]\bigg\}d\ell\bigg\vert\\
&\leq \dfrac{C\vert u\vert\Delta_n}{n}\sum_{k=0}^{n-1}\left(1+\vert X_{t_k}\vert^{q}\right),
\end{align*}
for some constants $C, q>0$, which shows Lemma \ref{zero}(i).

Finally, we proceed as in the proof of Lemma \ref{lemma2} to show that condition (ii) of Lemma \ref{zero} holds true. Thus, the result follows.
\end{proof}

\begin{lemma}\label{lemma6} Assume conditions {\bf(A1)}-{\bf(A8)}. Then as $n\to\infty$, 
\begin{equation*}\begin{split}
&\sum_{k=0}^{n-1}\dfrac{u}{\sqrt{n\Delta_n^3}}\int_0^1\left(Z_{k,n}^{6,\ell}-\widetilde{\E}_{X_{t_k}}^{\theta(\ell)}\left[R_6^{\theta(\ell),k}\Big\vert Y_{t_{k+1}}^{\theta(\ell)}=X_{t_{k+1}}\right]\right)d\ell\overset{\P^{\theta_0}}{\longrightarrow} 0.
\end{split}
\end{equation*}
\end{lemma}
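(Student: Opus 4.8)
The plan is to follow the same scheme of proof used in Lemmas \ref{lemma2}--\ref{lemma5}, namely to verify the two conditions of Lemma \ref{zero} under $\P^{\theta_0}$ for the triangular array
$$
Z_{k,n}:=\dfrac{u}{\sqrt{n\Delta_n^3}}\int_0^1\left(Z_{k,n}^{6,\ell}-\widetilde{\E}_{X_{t_k}}^{\theta(\ell)}\left[R_6^{\theta(\ell),k}\Big\vert Y_{t_{k+1}}^{\theta(\ell)}=X_{t_{k+1}}\right]\right)d\ell,
$$
but with one essential extra ingredient: since both $Z_{k,n}^{6,\ell}$ and $R_6^{\theta(\ell),k}$ involve a compensated Poisson integral over $[t_k,t_{k+1}]$, one cannot simply bound $\E[|Z_{k,n}|\,|\widehat{\mathcal F}_{t_k}]$ and appeal to Lemma \ref{zero2}; the martingale part is too large after division by $\sqrt{n\Delta_n^3}$. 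Instead, I would split the jump integral into a ``small jump'' part (over $\{|z|\le\rho_1\Delta_n^{\upsilon}\}$) and a ``large jump'' part, exactly the place where hypothesis {\bf(A7)} enters, and treat the two parts differently.

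First I would note that $\E[Z_{k,n}^{6,\ell}\,|\,\widehat{\mathcal F}_{t_k}]=0$ because $\int_{t_k}^{t_{k+1}}\int_{\R_0^d}c(X_{s-}^{\theta_0},z)\widetilde N(ds,dz)$ is a martingale increment and the prefactor is $\widehat{\mathcal F}_{t_k}$-measurable; the analogous statement for $R_6^{\theta(\ell),k}$ holds under $\widetilde{\P}_{X_{t_k}}^{\theta(\ell)}$, so the conditional-expectation term is handled via Girsanov's theorem (Lemma \ref{c3Girsanov}) and Lemma \ref{c3lemma1} exactly as in Lemma \ref{lemma2}, producing a bound of order $\frac{1}{\sqrt n}\cdot\frac{1}{\sqrt{n\Delta_n^3}}\cdot\Delta_n\cdot(\Delta_n^{1/2}\|c\|_{L^2(\nu)})$ times $\frac1n\sum(1+|X_{t_k}|^{q})$, which is $o(1)$; this gives condition (i) of Lemma \ref{zero}. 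For condition (ii) I would estimate $\E[Z_{k,n}^2\,|\,\widehat{\mathcal F}_{t_k}]$. Writing $Z_{k,n}^{6,\ell}=\Delta_n A_{t_k}^{\ell}\int_{t_k}^{t_{k+1}}\int_{\R_0^d}c(X_{s-}^{\theta_0},z)\widetilde N(ds,dz)$ with $A_{t_k}^{\ell}=(\partial_\theta b(\theta(\ell),X_{t_k}))^\ast(\sigma\sigma^\ast)^{-1}(X_{t_k})$, the isometry for compensated Poisson integrals gives
$$
\E\!\left[\Big|\int_{t_k}^{t_{k+1}}\!\!\int_{\R_0^d}\!c(X_{s-}^{\theta_0},z)\widetilde N(ds,dz)\Big|^2\Big|\widehat{\mathcal F}_{t_k}\right]\le C\Delta_n\!\int_{\R_0^d}\!\zeta^2(z)\nu(dz)\,(1+|X_{t_k}|^2)\le C\Delta_n(1+|X_{t_k}|^2),
$$
so $\E[(Z_{k,n}^{6,\ell})^2|\widehat{\mathcal F}_{t_k}]\le C\Delta_n^3(1+|X_{t_k}|^{q})$, and hence $\sum_k\E[Z_{k,n}^2|\widehat{\mathcal F}_{t_k}]\le \frac{Cu^2}{n\Delta_n^3}\cdot\Delta_n^3\cdot\frac1n\sum_k(1+|X_{t_k}|^{q})\cdot\frac1n$, wait---this needs the correct bookkeeping, so more carefully $\sum_k\frac{u^2}{n\Delta_n^3}\E[(\int\cdots)^2]\le\frac{Cu^2}{n\Delta_n^3}\sum_k\Delta_n^3(1+|X_{t_k}|^q)=\frac{Cu^2}{n}\sum_k(1+|X_{t_k}|^q)\cdot\frac1n$, which still does not vanish by itself. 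This is exactly why the conditional-expectation subtraction is needed: the point is that $Z_{k,n}$ is the \emph{difference} between $Z_{k,n}^{6,\ell}$ and its average over the law of $Y^{\theta(\ell)}$ given the endpoint $X_{t_{k+1}}$, and after conditioning on the whole path the jump part of $X^{\theta_0}$ on $[t_k,t_{k+1}]$ and the jump part of $Y^{\theta(\ell)}$ are genuinely matched by the endpoint constraint; here one uses the upper and lower bounds for the transition density conditioned on the jump structure (Lemma \ref{c3lemma8} and the large-deviation estimate Lemma \ref{c3ordre}) together with {\bf(A7)} to show that, with $\P^{\theta_0}$-probability tending to one, on each interval $[t_k,t_{k+1}]$ there is at most one jump and its amplitude is bounded below, so that the jump contribution to the endpoint pins down $\int c\,\widetilde N$ up to an $O(\Delta_n)$ Gaussian fluctuation. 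Quantifying this yields $\E[|Z_{k,n}^{6,\ell}-\widetilde{\E}_{X_{t_k}}^{\theta(\ell)}[R_6^{\theta(\ell),k}|\cdots]|^2\,|\,\widehat{\mathcal F}_{t_k}]\le C\Delta_n^{3+\gamma}(1+|X_{t_k}|^q)$ for some $\gamma>0$ coming from {\bf(A7)} (and from the density estimates), whence $\sum_k\E[Z_{k,n}^2|\widehat{\mathcal F}_{t_k}]\le\frac{Cu^2\Delta_n^{\gamma}}{n}\sum_k(1+|X_{t_k}|^q)\overset{\P^{\theta_0}}{\to}0$.

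The main obstacle, and the only genuinely new point compared with Lemmas \ref{lemma2}--\ref{lemma5}, is controlling the conditional second moment of the jump term: one must show that conditioning $Y^{\theta(\ell)}$ on its terminal value $X_{t_{k+1}}$ forces the compensated jump integral of $Y^{\theta(\ell)}$ to be close, in $L^2$ and after the prefactor $\Delta_n/\sqrt{n\Delta_n^3}$, to the compensated jump integral of the observed process $X^{\theta_0}$ on the same interval. This is where the conditional density bounds and the large-deviation Lemma \ref{c3ordre} are indispensable: they guarantee that both processes have, with overwhelming probability, the same number of jumps (zero or one) on $[t_k,t_{k+1}]$ with comparable amplitudes, so that the difference is a pure Brownian/drift discrepancy of order $\Delta_n$ rather than of order $\sqrt{\Delta_n}$. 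Once this matching is in place, Lemma \ref{zero} applies and gives the convergence to zero; combining this lemma with Lemmas \ref{lemma2}--\ref{lemma5} completes the proof of Lemma \ref{negligible}, and hence of Theorem \ref{c3result}.
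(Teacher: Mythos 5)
Your outline correctly identifies the Lemma \ref{zero} framework and the relevant tools (Girsanov's theorem, Lemma \ref{c3lemma1}, the conditional density bounds, Lemma \ref{c3ordre}, hypothesis {\bf(A7)}), and you rightly observe that the raw isometry bound $\E[(Z_{k,n}^{6,\ell})^2\,\vert\,\widehat{\mathcal{F}}_{t_k}]\lesssim\Delta_n^3(1+\vert X_{t_k}\vert^q)$ only gives an $O(1)$ contribution after the $(n\Delta_n^3)^{-1}$ prefactor, so the subtraction of $\widetilde{\E}_{X_{t_k}}^{\theta(\ell)}[R_6^{\theta(\ell),k}\,\vert\,\cdots]$ must do genuine work. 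But the central quantitative claim on which your argument rests, namely $\E[\vert Z_{k,n}^{6,\ell}-\widetilde{\E}_{X_{t_k}}^{\theta(\ell)}[R_6^{\theta(\ell),k}\vert\cdots]\vert^2\vert\widehat{\mathcal{F}}_{t_k}]\le C\Delta_n^{3+\gamma}(1+\vert X_{t_k}\vert^q)$ for some $\gamma>0$ ``coming from {\bf(A7)}'', is not correct and cannot hold in that form: {\bf(A7)} only guarantees $\int_{\{\vert z\vert\le\rho_1\Delta_n^\upsilon\}}\nu(dz)\to 0$, with no polynomial rate, so no fixed power $\Delta_n^\gamma$ can absorb that contribution. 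The paper's actual bound on the dominant piece keeps $\big(\int_{\{\vert z\vert\le\rho_1\Delta_n^\upsilon\}}\nu(dz)\big)^{1/q}$ as a separate factor, alongside terms $\lambda^{2/q}\Delta_n^{1/q}$, $\Delta_n^{2\varepsilon}$, and exponentially small pieces tamed via $e^{-x}<p!/x^p$, and lets it vanish by {\bf(A7)} directly.

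More fundamentally, everything between ``both terms are martingale increments'' and the claimed bound is asserted rather than proved, and that gap is precisely where the lemma's technical content lives. The paper proceeds via a concrete Cauchy--Schwarz decomposition of the conditional second moment into $D_1$ (Lipschitz error of $c$ along $X$), $D_2$ (Lipschitz error of $c$ along $Y$, handled under $\widehat{Q}_k^{\theta(\ell),\theta_0}$ using Lemma \ref{c3lemma1}), and a residual $D_3$ with $c(\cdot,z)$ frozen at $X_{t_k}$; then $D_3=D_{3,1}+D_{3,2}$, and the hard term $D_{3,2}$ is controlled by inserting ${\bf 1}_{\widehat{J}_{0,k}}+{\bf 1}_{\widehat{J}_{1,k}}+{\bf 1}_{\widehat{J}_{2,k}}$ and applying the three estimates of Lemma \ref{c3ordre}, which in turn rest on the density representations of Lemma \ref{c3lemma7} and the Gaussian-type upper bounds of Lemma \ref{c3lemma8}. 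Your ``matching by the endpoint'' heuristic is qualitatively right but it is a soft matching through a transition-density ratio (as Lemma \ref{c3lemma7} makes explicit), not a hard one, and quantifying that softness is exactly the role of Lemma \ref{c3ordre}; none of this can be skipped. One small additional remark: the paper shows that condition (i) of Lemma \ref{zero} is identically zero for every $n$, since $R_6^{\theta(\ell),k}$ is built from $M$ and hence independent of the Radon--Nikodym density built from $B$, so your $o(1)$ estimate for that part, while not wrong, is unnecessary.
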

\begin{proof}
First, by Girsanov's theorem,
\begin{align*} 
\E\left[Z_{k,n}^{6,\ell}-\widetilde{\E}_{X_{t_k}}^{\theta(\ell)}\left[R_6^{\theta(\ell),k}\Big\vert Y_{t_{k+1}}^{\theta(\ell)}=X_{t_{k+1}}\right]\Big\vert  \widehat{\mathcal{F}}_{t_k}\right]=-\E_{\widehat{Q}_k^{\theta(\ell),\theta_0}}\left[R_6^{\theta(\ell),k}\dfrac{d\widehat{\P}}{d \widehat{Q}_k^{\theta(\ell),\theta_0}}\Big\vert X_{t_k}\right]=0,
\end{align*}
where we have used the independence between $R_6^{\theta(\ell),k}$ and $\frac{d\widehat{\P}}{d \widehat{Q}_k^{\theta(\ell),\theta_0}}$ together with the fact that $\E_{\widehat{Q}_k^{\theta(\ell),\theta_0}}[R_6^{\theta(\ell),k}]=0$ and $\E_{\widehat{Q}_k^{\theta(\ell),\theta_0}}[\frac{d\widehat{\P}}{d \widehat{Q}_k^{\theta(\ell),\theta_0}}\vert X_{t_k}]=1$. This shows that the term (i) of Lemma \ref{zero} is actually equal to 0 for all $n \geq 1$.

We next show that condition (ii) of Lemma \ref{zero} holds. Cauchy-Schwarz inequality gives
\begin{equation*} \begin{split}
&\sum_{k=0}^{n-1}\dfrac{u^2}{n\Delta_n^3}\E\left[\left(\int_0^1\left(Z_{k,n}^{6,\ell}-\widetilde{\E}_{X_{t_k}}^{\theta(\ell)}\left[R_6^{\theta(\ell),k}\Big\vert Y_{t_{k+1}}^{\theta(\ell)}=X_{t_{k+1}}\right]\right)d\ell\right)^2\bigg\vert \widehat{\mathcal{F}}_{t_k}\right]\\
&\leq \dfrac{3u^2}{n\Delta_n}\sum_{k=0}^{n-1}\int_0^1\left(D_{1}+D_2+D_3\right)d\ell,
\end{split}
\end{equation*}
where, setting $e_k(\theta(\ell)):=\left(\partial_{\theta}
b(\theta(\ell),X_{t_k})\right)^{\ast}(\sigma\sigma^{\ast})^{-1}(X_{t_k})$,
\begin{align*} 
&D_1=\E\left[\left(e_k(\theta(\ell))\int_{t_k}^{t_{k+1}}\int_{\R_0^d}\left(c(X_{s-}^{\theta_0},z)-c(X_{t_k},z)\right)\widetilde{N}(ds,dz)\right)^2\Big\vert X_{t_k}\right],\\
&D_2=\E\left[\widetilde{\E}_{X_{t_k}}^{\theta(\ell)}\left[\left(e_k(\theta(\ell))\int_{t_k}^{t_{k+1}}
\int_{\R_0^d}\left(c(Y_{s-}^{\theta(\ell)},z)-c(Y_{t_k}^{\theta(\ell)},z)\right)\widetilde{M}(ds,dz)\right)^2\bigg\vert Y_{t_{k+1}}^{\theta(\ell)}=X_{t_{k+1}}\right]\Big\vert X_{t_k}\right],\\
&D_3=\E\bigg[\bigg(e_k(\theta(\ell))\bigg(\int_{t_k}^{t_{k+1}}
\int_{\R_0^d}c(X_{t_k},z)\widetilde{N}(ds,dz)\\
&\qquad\qquad-\widetilde{\E}_{X_{t_k}}^{\theta(\ell)}\left[\int_{t_k}^{t_{k+1}}
\int_{\R_0^d}c(Y_{t_k}^{\theta(\ell)},z)\widetilde{M}(ds,dz)\bigg\vert Y_{t_{k+1}}^{\theta(\ell)}=X_{t_{k+1}}\right]\bigg)\bigg)^2\Big\vert X_{t_k}\bigg].
\end{align*}

Using Burkh\"{o}lder's inequality, the Lipschitz property of $c$ and Lemma \ref{c3moment3}(i), together with hypotheses {\bf(A2)}, {\bf(A4)}\textnormal{(b)} and {\bf(A6)}, we get that for some constants $C, q>0$,
$$
\dfrac{3u^2}{n\Delta_n}\sum_{k=0}^{n-1}\int_0^1D_{1}d\ell\leq \dfrac{Cu^2\Delta_n}{n}\sum_{k=0}^{n-1}\left(1+\vert X_{t_k}\vert^{q}\right).
$$

Using Girsanov's theorem, Burkh\"{o}lder's inequality, Lemmas \ref{c3lemma1} and \ref{c3moment3}(i), together with hypotheses {\bf(A1)}, {\bf(A2)}, {\bf(A4)}\textnormal{(b)} and {\bf(A6)}, we obtain that for some constants $C, q>0$,
\begin{align*} 
&D_2\leq\E_{\widehat{Q}_k^{\theta(\ell),\theta_0}}\left[\left(e_k(\theta(\ell))\int_{t_k}^{t_{k+1}}
\int_{\R_0^d}\left(c(Y_{s-}^{\theta(\ell)},z)-c(Y_{t_k}^{\theta(\ell)},z)\right)\widetilde{M}(ds,dz)\right)^2\Big\vert X_{t_k}\right]\\
&+\left\vert \E_{\widehat{Q}_k^{\theta(\ell),\theta_0}}\left[\left(e_k(\theta(\ell))\int_{t_k}^{t_{k+1}}
\int_{\R_0^d}\left(c(Y_{s-}^{\theta(\ell)},z)-c(Y_{t_k}^{\theta(\ell)},z)\right)\widetilde{M}(ds,dz)\right)^2\left(\dfrac{d\widehat{\P}}{d \widehat{Q}_k^{\theta(\ell),\theta_0}}-1\right)\Big\vert X_{t_k}\right]\right\vert\\
&\leq C\left(\Delta_n^2+\dfrac{\Delta_n}{\sqrt{n}}\right)\left(1+\vert X_{t_k}\vert^{q}\right).
\end {align*} 
This implies that
\begin{align*} 
\dfrac{3u^2}{n\Delta_n}\sum_{k=0}^{n-1}\int_0^1D_{2}d\ell\leq C\left(\Delta_n+\dfrac{1}{\sqrt{n}}\right)\dfrac{u^2}{n}\sum_{k=0}^{n-1}\left(1+\vert X_{t_k}\vert^{q}\right).
\end {align*} 

Again, Girsanov's theorem yields $D_3=D_{3,1}+D_{3,2}$, where
\begin{align*} 
D_{3,1}&=\E_{\widehat{Q}_k^{\theta(\ell),\theta_0}}\bigg[\bigg(e_k(\theta(\ell))\bigg(\int_{t_k}^{t_{k+1}}
\int_{\R_0^d}c(X_{t_k},z)\widetilde{N}(ds,dz)\\
&\qquad-\widetilde{\E}_{X_{t_k}}^{\theta(\ell)}\left[\int_{t_k}^{t_{k+1}}
\int_{\R_0^d}c(Y_{t_k}^{\theta(\ell)},z)\widetilde{M}(ds,dz)\bigg\vert Y_{t_{k+1}}^{\theta(\ell)}=X_{t_{k+1}}\right]\bigg)\bigg)^2\left(\dfrac{d\widehat{\P}}{d \widehat{Q}_k^{\theta(\ell),\theta_0}}-1\right)\Big\vert X_{t_k}\bigg],\\
D_{3,2}&=\E_{\widehat{Q}_k^{\theta(\ell),\theta_0}}\bigg[\bigg(e_k(\theta(\ell))\bigg(\int_{t_k}^{t_{k+1}}
\int_{\R_0^d}c(X_{t_k},z)N(ds,dz)\\
&\qquad-\widetilde{\E}_{X_{t_k}}^{\theta(\ell)}\left[\int_{t_k}^{t_{k+1}}
\int_{\R_0^d}c(Y_{t_k}^{\theta(\ell)},z)M(ds,dz)\bigg\vert Y_{t_{k+1}}^{\theta(\ell)}=X_{t_{k+1}}\right]\bigg)\bigg)^2\Big\vert X_{t_k}\bigg].
\end{align*}

Observe that $\vert D_{3,1}\vert\leq 2(D_{3,1,1} +  D_{3,1,2})$, where
\begin{align*} 
&D_{3,1,1}=\left\vert\E_{\widehat{Q}_k^{\theta(\ell),\theta_0}}\left[\left(e_k(\theta(\ell))\int_{t_k}^{t_{k+1}}
\int_{\R_0^d}c(X_{t_k},z)\widetilde{N}(ds,dz)\right)^2\left(\dfrac{d\widehat{\P}}{d \widehat{Q}_k^{\theta(\ell),\theta_0}}-1\right)\Big\vert X_{t_k}\right]\right\vert,\\
&D_{3,1,2}=\bigg\vert\E_{\widehat{Q}_k^{\theta(\ell),\theta_0}}\bigg[\left(\widetilde{\E}_{X_{t_k}}^{\theta(\ell)}\left[e_k(\theta(\ell))\int_{t_k}^{t_{k+1}}
\int_{\R_0^d}c(Y_{t_k}^{\theta(\ell)},z)\widetilde{M}(ds,dz)\bigg\vert Y_{t_{k+1}}^{\theta(\ell)}=X_{t_{k+1}}\right]\right)^2\\
&\qquad\qquad\qquad\times\left(\dfrac{d\widehat{\P}}{d \widehat{Q}_k^{\theta(\ell),\theta_0}}-1\right)\Big\vert X_{t_k}\bigg]\bigg\vert.
\end{align*}

Using the same arguments as for the term $D_2$, we get that for some constants $C, q>0$,
$$
\dfrac{3u^2}{n\Delta_n}\sum_{k=0}^{n-1}\int_0^1D_{3,1,1}d\ell\leq \dfrac{C}{\sqrt{n\Delta_n}}\dfrac{u^2}{n}\sum_{k=0}^{n-1}\left(1+\vert X_{t_k}\vert^{q}\right).
$$

Applying Lemma \ref{c3lemma1}, Jensen's inequality and {\bf(A1)}, {\bf(A2)}, {\bf(A4)}\textnormal{(b)} and {\bf(A6)}, we obtain that for some constants $C, q>0$,
\begin{align*} 
&\dfrac{3u^2}{n\Delta_n}\sum_{k=0}^{n-1}\int_0^1D_{3,1,2}d\ell\leq\dfrac{Cu^2}{n\Delta_n\sqrt{n}}\sum_{k=0}^{n-1}\left(1+\vert X_{t_k}\vert^{q}\right)\\
&\times\int_0^1\left(\E_{\widehat{\P}^{\alpha}}\left[\widetilde{\E}_{X_{t_k}}^{\theta(\ell)}\left[\left(e_k(\theta(\ell))\int_{t_k}^{t_{k+1}}
\int_{\R_0^d}c(Y_{t_k}^{\theta(\ell)},z)\widetilde{M}(ds,dz)\right)^4\bigg\vert Y_{t_{k+1}}^{\theta(\ell)}=X_{t_{k+1}}\right]\Big\vert X_{t_k}\right]\right)^{1/2}d\ell\\
&\leq \dfrac{C}{\sqrt{n\Delta_n}}\dfrac{u^2}{n}\sum_{k=0}^{n-1}\left(1+\vert X_{t_k}\vert^{q}\right).
\end{align*} 

Finally, it remains to treat $D_{3,2}$. Multiplying the random variable inside the expectation by ${\bf 1}_{\widehat{J}_{0,k}} +{\bf 1}_{\widehat{J}_{1,k}}+ {\bf 1}_{\widehat{J}_{2,k}}$, applying Lemma \ref{c3ordre}, and using the inequality $e^{-x}<\frac{p!}{x^p}$, valid for any $x>0$ and $p\geq 1$, we get that for $n$ large enough, for any $\alpha\in(\upsilon+3m\gamma+3\gamma,\frac{1}{2})$, $\alpha_0\in(\frac 14,\frac 12-3\gamma)$, $\varepsilon\in(0,\alpha_0-3m\gamma)$, $q>1$, and $p\geq 1$,
\begin{align*}
&\dfrac{3u^2}{n\Delta_n}\sum_{k=0}^{n-1}\int_0^1D_{3,2}d\ell=\dfrac{3u^2}{n\Delta_n}\sum_{k=0}^{n-1}\int_0^1\left(M_0^{\theta(\ell)}+M_1^{\theta(\ell)}+M_2^{\theta(\ell)}\right) d\ell\\
&\leq C
\left(\lambda^{\frac{2}{q}}\Delta_n^{\frac{1}{q}}+\Delta_n^{2\varepsilon}+\left(\int_{\{\vert z\vert \leq\rho_1\Delta_n^{\upsilon}\}}\nu(dz)\right)^{\frac{1}{q}}\right)\dfrac{u^2}{n}\sum_{k=0}^{n-1}\left(1+\vert X_{t_k}\vert^{q_1}\right)\\
&\quad+C\dfrac{u^2}{n}\sum_{k=0}^{n-1}\left(1+\vert X_{t_k}\vert^{q_1}\right)\left(\Delta_n^{-2m\gamma-1}e^{-C_0\frac{\Delta_n^{2\alpha-1}}{(1+\vert X_{t_k}\vert^2)^3}}+\Delta_n^{-2m\gamma-\frac{d}{2}-1}e^{-C_1\frac{\Delta_n^{2\alpha_0+6\gamma-1}}{(1+\vert X_{t_k}\vert^2)^3}}\right)\\
&\leq C
\left(\lambda^{\frac{2}{q}}\Delta_n^{\frac{1}{q}}+\Delta_n^{2\varepsilon}+\left(\int_{\{\vert z\vert \leq\rho_1\Delta_n^{\upsilon}\}}\nu(dz)\right)^{\frac{1}{q}}\right)\dfrac{u^2}{n}\sum_{k=0}^{n-1}\left(1+\vert X_{t_k}\vert^{q_1}\right)\\
&\quad+Cp!\left(\Delta_n^{-2m\gamma-1+\left(1-2\alpha\right)p}+\Delta_n^{-2m\gamma-\frac{d}{2}-1+(1-2\alpha_0-6\gamma)p}\right)\dfrac{u^2}{n}\sum_{k=0}^{n-1}\left(1+\vert X_{t_k}\vert^{q_1}\right)\left(1+\vert X_{t_k}\vert^2\right)^{3p},
\end{align*}
for some constants $C, C_0, C_1>0$, $q_1>1$. Note that the events $\widehat{J}_{0,k}$, $\widehat{J}_{1,k}$, $\widehat{J}_{2,k}$, the constants $\upsilon$, $\gamma$ and $M_0^{\theta(\ell)}, M_1^{\theta(\ell)}, M_2^{\theta(\ell)}$ are defined in Subsections \ref{expression} and \ref{large}. 

Therefore, using hypothesis {\bf(A7)} and choosing $p\geq 1$ such that $-2m\gamma-1+(1-2\alpha)p>0$ and $-2m\gamma-\frac{d}{2}-1+(1-2\alpha_0-6\gamma)p>0$, we conclude that as $n\to\infty$, 
\begin{align*}
\dfrac{3u^2}{n\Delta_n}\sum_{k=0}^{n-1}\int_0^1D_{3,2}d\ell\overset{\P^{\theta_0}}{\longrightarrow} 0.
\end{align*}
Thus, the desired proof is now completed.
\end{proof}

\subsection{Bounded drift}
\label{bounded}

The aim of this Subsection is to prove that the LAN property also holds true for equation \eqref{c3eq1} when the drift is assumed to be bounded. For this, the assumptions on the drift and jump coefficients are reformulated as follows.

\begin{list}{labelitemi}{\leftmargin=1cm}
	\item[\bf(A1')] Same condition as {\bf(A1)} except that $\vert b(\theta,x)\vert \leq L(1+\vert x\vert)$ and $\vert c(x,z)\vert\leq \zeta(z)(1+\vert x\vert)$ are replaced by 
	\begin{align*}\vert b(\theta,x)\vert \leq L, \qquad \vert c(x,z)\vert\leq \zeta(z).
	\end{align*}
	
	\item[\bf(A8')] $\vert\det(\textup{I}_d+\nabla_{x}c(x,z))\vert\geq \eta$, and $\vert\nabla\psi^{-1}(v)u\vert\geq \frac{\vert u\vert}{\beta}$, for some constants $\eta, \beta>0$, where $\psi(v)=v+c(v,z)-x-c(x,z)$.
\end{list}

In this case, the LAN property also holds.
\begin{theorem} Assume conditions {\bf(A1')}, {\bf(A2)}-{\bf(A7)} and {\bf(A8')}. Then, the statement of Theorem \ref{c3result} remains valid.
\end{theorem}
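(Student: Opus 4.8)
The plan is to follow the same three-step scheme as in the proof of Theorem~\ref{c3result}, observing that almost all of it is unchanged. The first point is that {\bf(A1')} is stronger than {\bf(A1)}: it keeps the same Lipschitz bounds and merely replaces the linear-growth bounds on $b$ and $c$ by uniform bounds. Hence the log-likelihood expansion \eqref{expansion} and its proof, the moment estimates of Lemmas~\ref{c3moment3}--\ref{c3estimate}, the Girsanov Lemmas~\ref{c3Girsanov}--\ref{c3lemma1}, the LAN property for the main term $\sum_k\xi_{k,n}$ (Lemma~\ref{main}), and the negligibility Lemmas~\ref{lemma2}--\ref{lemma5} all remain valid with the same proofs; in particular the asymptotic Fisher information is again $\Gamma(\theta_0)$. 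So the only part that requires a new argument is the analogue of Lemma~\ref{lemma6}, namely the negligibility of the jump-related term, which rests on the upper bounds for the transition density conditioned on the jump structure (the counterpart of Lemma~\ref{c3lemma8}) and on the large deviation type estimates (the counterpart of Lemma~\ref{c3ordre}).

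First I would re-derive those density estimates, which is where the simplification lies. In the proof of Theorem~\ref{c3result} the change of variables $V_t^\theta=f(X_t^\theta)$ was introduced solely to replace the unbounded drift by a bounded one before applying the Malliavin-calculus upper bounds, at the cost of {\bf(A8)}\textnormal{(a)}--\textnormal{(b)}. Under {\bf(A1')} the drift of $X^\theta$ is already bounded, so no transformation is needed: between consecutive jump times $X^\theta$ solves an SDE with bounded Lipschitz drift and uniformly elliptic diffusion coefficient, for which the standard argument yields Gaussian upper bounds $C t^{-d/2}\exp(-c|y-x|^2/t)$ for the conditional density on each sub-interval. Composing these bounds over the almost surely finite set of jump times in $[t_k,t_{k+1}]$, I would use {\bf(A8')} to control the Jacobian of the jump map $x\mapsto x+c(x,z)$: the bound $|\det(\textup{I}_d+\nabla_x c(x,z))|\geq\eta$ plays the role of the first inequality of {\bf(A8)}\textnormal{(a)} and $|\nabla\psi^{-1}(v)u|\geq|u|/\beta$ the role of the second, now with constants uniform in $z$ rather than the functions $\eta(z),\beta(z)$. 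The ellipticity of $\nabla f\sigma$ in {\bf(A8)}\textnormal{(b)} is no longer needed, since ellipticity of $\sigma$ itself, i.e. {\bf(A2)}, is what enters directly. The lower bounds required in Lemma~\ref{c3ordre} are obtained the same way, their exponential-type behavior being governed by $\inf_x|c(x,z)|\geq C|z|$ from {\bf(A3)}, the bound $|c(x,z)|\leq\zeta(z)$ from {\bf(A1')}, and the moments {\bf(A6)}.

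Once these estimates are in hand, the proofs of Lemma~\ref{c3ordre} and then of Lemma~\ref{lemma6} go through verbatim: the decomposition on the events $\widehat J_{0,k},\widehat J_{1,k},\widehat J_{2,k}$ by number of jumps, the large-deviation bounds for the term $D_{3,2}$, and the final choice of the exponent $p$ are all insensitive to replacing {\bf(A8)} by {\bf(A8')}. Combining the unchanged Lemmas~\ref{main} and \ref{lemma2}--\ref{lemma5} with this version of Lemma~\ref{lemma6} yields Lemma~\ref{negligible}, and then, exactly as in Section~\ref{proof}, the LAN property at $\theta_0$ with rate $\sqrt{n\Delta_n}$ and asymptotic Fisher information $\Gamma(\theta_0)$, which is the statement of the theorem.

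The hard part will be the re-derivation of the conditional transition-density bounds: even though it is conceptually easier than in the unbounded-drift case, one must set up the parametrix/Malliavin estimate between jumps carefully and check that {\bf(A8')}, with its uniform constants, is precisely what propagates the Gaussian upper bound and the exponential lower bound through the jump maps. Everything after that is a transcription of arguments already in the paper.
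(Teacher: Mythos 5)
Your proposal is correct and follows essentially the same route as the paper: the paper's proof also observes that the only modification needed is to replace the transition-density upper bounds \eqref{q0}, \eqref{q1} by the simpler bounded-drift versions \eqref{bq0}, \eqref{bq1}, obtained without the $f$-transformation (citing Azencott for \eqref{bq0} and re-running the change-of-variables argument $\psi(v)=v+c(v,z)-x-c(x,z)$ under {\bf(A8')} for \eqref{bq1}), after which the large-deviation Lemma~\ref{c3ordre} and the negligibility Lemma~\ref{lemma6} carry over in the same structure. Your characterizations of which lemmas are unchanged under {\bf(A1')} and of the role of {\bf(A8')} relative to {\bf(A8)} match the paper's reasoning.
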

\begin{proof}
	The proof follows along the same lines as that of Theorem \ref{c3result} except that the estimates \eqref{q0} and \eqref{q1} are now replaced by \eqref{bq0} and \eqref{bq1}.
\end{proof}

\section{Appendix}
\subsection{Proof of Proposition \ref{c3Gobet}}
\begin{proof}
Let $f: \R^d\rightarrow \R$ be a continuously differentiable function with compact support. Fix $t \in [t_k,t_{k+1}]$. The chain rule of the Malliavin calculus gives $(D_t(f(Y_{t_{k+1}}^{\theta}(t_k,x))))^{\ast}=(\nabla f(Y_{t_{k+1}}^{\theta}(t_k,x)))^{\ast} \, D_tY_{t_{k+1}}^{\theta}(t_k,x)$. Since the matrix $D_tY_{t_{k+1}}^{\theta}(t_k,x)$ is invertible a.s., we have $(\nabla f(Y_{t_{k+1}}^{\theta}(t_k,x)))^{\ast}=(D_t(f(Y_{t_{k+1}}^{\theta}(t_k,x))))^{\ast} \, U_{t}^{\theta}(t_k,x)$, where $U_t^{\theta}(t_k,x)=(D_tY_{t_{k+1}}^{\theta}(t_k,x))^{-1}$.

Then, using the integration by parts formula of the Malliavin calculus on the interval $[t_k,t_{k+1}]$, we get that
\begin{equation*}\begin{split}
\partial_{\theta}\widetilde{\E}\left[f(Y_{t_{k+1}}^{\theta}(t_k,x))\right]&=\widetilde{\E}\left[(\nabla f(Y_{t_{k+1}}^{\theta}(t_k,x)))^{\ast} \, \partial_{\theta} Y_{t_{k+1}}^{\theta}(t_k,x)\right]\\
&=\dfrac{1}{\Delta_n}\widetilde{\E}\left[\int_{t_k}^{t_{k+1}}(\nabla f(Y_{t_{k+1}}^{\theta}(t_k,x)))^{\ast} \,  \partial_{\theta} Y_{t_{k+1}}^{\theta}(t_k,x)dt\right]\\
&=\dfrac{1}{\Delta_n}\widetilde{\E}\left[\int_{t_k}^{t_{k+1}}(D_t(f(Y_{t_{k+1}}^{\theta}(t_k,x))))^{\ast} \, U_{t}^{\theta}(t_k,x) \, \partial_{\theta} Y_{t_{k+1}}^{\theta}(t_k,x)dt\right]\\
&=\dfrac{1}{\Delta_n}\widetilde{\E}\left[f(Y_{t_{k+1}}^{\theta}(t_k,x))\delta\left(U^{\theta}(t_k,x) \partial_{\theta}Y_{t_{k+1}}^{\theta}(t_k,x)\right)\right].
\end{split}
\end{equation*}
Observe that by (\ref{ytheta}), the family $((\nabla f(Y_{t_{k+1}}^{\theta}(t_k,x)))^{\ast} \,\partial_{\theta}Y_{t_{k+1}}^{\theta}(t_k,x), \theta \in \Theta)$ is uniformly integrable. This justifies that we can interchange $\partial_{\theta}$
and $\widetilde{\E}$.
Note that here $\delta(V)\equiv\delta(V{\bf 1}_{[t_k, t_{k+1}]}(\cdot))$ for any $V\in \textnormal{Dom}\ \delta$. On the other hand, using the stochastic flow property, we have that
\begin{equation*}\begin{split}
\partial_{\theta}\widetilde{\E}\left[f(Y_{t_{k+1}}^{\theta}(t_k,x))\right]=\int_{\R^d}f(y)\partial_{\theta}p^{\theta}(\Delta_n,x,y)dy,
\end{split}
\end{equation*}
and
\begin{align*}
&\widetilde{\E}\left[f(Y_{t_{k+1}}^{\theta}(t_k,x))\delta\left(U^{\theta}(t_k,x) \partial_{\theta}Y_{t_{k+1}}^{\theta}(t_k,x)\right)\right]\\
&=\widetilde{\E}\left[f(Y_{t_{k+1}}^{\theta})\delta\left(U^{\theta}(t_k,x) \partial_{\theta}Y_{t_{k+1}}^{\theta}(t_k,x)\right)\Big\vert Y_{t_{k}}^{\theta}=x\right]\\
&=\int_{\R^d}f(y)\widetilde{\E}\left[\delta\left(U^{\theta}(t_k,x) \partial_{\theta}Y_{t_{k+1}}^{\theta}(t_k,x)\right)\Big\vert Y_{t_{k}}^{\theta}=x, Y_{t_{k+1}}^{\theta}=y\right]p^{\theta}(\Delta_n,x,y)dy,
\end{align*}
which finishes the desired proof.
\end{proof}

\subsection{Proof of Lemma \ref{c3delta}}
\begin{proof}
From \eqref{partialx} and It\^o's formula, 
\begin{align*} 
&(\nabla_xY_{t}^{\theta}(t_k,x))^{-1}=\textup{I}_d-\int_{t_k}^t(\nabla_xY_{s}^{\theta}(t_k,x))^{-1}\left(\nabla_xb(\theta,Y_s^{\theta}(t_k,x))-\sum_{i=1}^{d}(\nabla_x\sigma_i(Y_s^{\theta}(t_k,x)))^2\right)ds\\
&\qquad-\sum_{i=1}^{d}\int_{t_k}^t(\nabla_xY_{s}^{\theta}(t_k,x))^{-1}\nabla_x\sigma_i(Y_s^{\theta}(t_k,x))dW_s^i\\
&\qquad+\int_{t_k}^t\int_{\mathbb{R}_0^d}(\nabla_xY_{s}^{\theta}(t_k,x))^{-1}\left(\textup{I}_d+\nabla_xc(Y_{s-}^{\theta}(t_k,x),z)\right)^{-1}(\nabla_xc(Y_{s-}^{\theta}(t_k,x),z))^2\nu(dz)ds\\
&\qquad-\int_{t_k}^t\int_{\mathbb{R}_0^d}(\nabla_xY_{s}^{\theta}(t_k,x))^{-1}\left(\textup{I}_d+\nabla_xc(Y_{s-}^{\theta}(t_k,x),z)\right)^{-1}\nabla_xc(Y_{s-}^{\theta}(t_k,x),z)\widetilde{M}(ds,dz),
\end{align*}
which, together with \eqref{partialtheta} and It\^o's formula again, implies that
\begin{equation}\label{identity}\begin{split}
(\nabla_xY_{t_{k+1}}^{\theta}(t_k,x))^{-1}\partial_{\theta}Y_{t_{k+1}}^{\theta}(t_k,x)=\int_{t_k}^{t_{k+1}}(\nabla_xY_{s}^{\theta}(t_k,x))^{-1}\partial_{\theta}b(\theta,Y_{s}^{\theta}(t_k,x))ds.
\end{split}
\end{equation}

Then, using the product rule \cite[(1.48)]{N} and \eqref{identity}, we obtain that
\begin{align*} 
&\delta\left(U^{\theta}(t_k,x) \partial_{\theta}Y_{t_{k+1}}^{\theta}(t_k,x)\right)\\
&=\delta\left(\sigma^{-1}(Y_{\cdot}^{\theta}(t_k,x))\nabla_x Y_{\cdot}^{\theta}(t_k,x)(\nabla_x Y_{t_{k+1}}^{\theta}(t_k,x))^{-1}\partial_{\theta}Y_{t_{k+1}}^{\theta}(t_k,x)\right)\\
&=(\partial_{\theta}Y_{t_{k+1}}^{\theta}(t_k,x))^{\ast} ((\nabla_x Y_{t_{k+1}}^{\theta}(t_k,x))^{-1})^{\ast} \int_{t_k}^{t_{k+1}} (\nabla_x Y_{s}^{\theta}(t_k,x))^{\ast} (\sigma^{-1}(Y_{s}^{\theta}(t_k,x)))^{\ast} dW_s\\
&\qquad-\int_{t_k}^{t_{k+1}} \text{tr} \left(D_s\left((\partial_{\theta}Y_{t_{k+1}}^{\theta}(t_k,x))^{\ast} ((\nabla_x Y_{t_{k+1}}^{\theta}(t_k,x))^{-1})^{\ast}\right)   \sigma^{-1}(Y_{s}^{\theta}(t_k,x)) \nabla_x Y_{s}^{\theta}(t_k,x)  \right)ds\\
&=\int_{t_k}^{t_{k+1}}((\nabla_xY_{s}^{\theta}(t_k,x))^{-1}\partial_{\theta}b(\theta,Y_{s}^{\theta}(t_k,x)))^{\ast} ds \int_{t_k}^{t_{k+1}} (\nabla_x Y_{s}^{\theta}(t_k,x))^{\ast} (\sigma^{-1}(Y_{s}^{\theta}(t_k,x)))^{\ast} dW_s\\
&-\int_{t_k}^{t_{k+1}} \int_s^{t_{k+1}}\text{tr} \left(D_s\left(((\nabla_xY_{u}^{\theta}(t_k,x))^{-1}\partial_{\theta}b(\theta,Y_{u}^{\theta}(t_k,x)))^{\ast}\right)  \sigma^{-1}(Y_{s}^{\theta}(t_k,x))  \nabla_x Y_{s}^{\theta}(t_k,x)  \right) duds.
\end{align*}

We next add and subtract the matrix  $((\nabla_xY_{t_k}^{\theta}(t_k,x))^{-1}\partial_{\theta}b(\theta,Y_{t_k}^{\theta}(t_k,x)))^{\ast}$ in the first integral and the matrix $(\nabla_x Y_{t_k}^{\theta}(t_k,x))^{\ast} (\sigma^{-1}(Y_{t_k}^{\theta}(t_k,x)))^{\ast}$ in the second integral. This, together with the fact that $Y_{t_k}^{\theta}(t_k,x)=Y_{t_k}^{\theta}=x$, yields 
\begin{equation}
\delta\left(U^{\theta}(t_k,x) \partial_{\theta}Y_{t_{k+1}}^{\theta}(t_k,x)\right)=\Delta_n(\sigma^{-1}(Y_{t_k}^{\theta})\partial_{\theta}b(\theta,Y_{t_k}^{\theta}))^{\ast} (W_{t_{k+1}}-W_{t_{k}})-R_{1}^{\theta,k}+R_{2}^{\theta,k}+R_{3}^{\theta,k}.\label{c3e0}
\end{equation}

On the other hand, by equation \eqref{c3eq1rajoute} we have that
\begin{equation*} \begin{split}
W_{t_{k+1}}-W_{t_{k}}&=\sigma^{-1}(Y_{t_k}^{\theta})\bigg(Y_{t_{k+1}}^{\theta}-Y_{t_{k}}^{\theta}-b(\theta,Y_{t_k}^{\theta})\Delta_n-\int_{t_k}^{t_{k+1}}\left(b(\theta,Y_{s}^{\theta})-b(\theta,Y_{t_k}^{\theta})\right)ds\\
&\qquad-\int_{t_k}^{t_{k+1}}\left(\sigma(Y_{s}^{\theta})-\sigma(Y_{t_k}^{\theta})\right)dW_s-\int_{t_k}^{t_{k+1}}
\int_{\mathbb{R}_0^d}c(Y_{s-}^{\theta},z)\widetilde{M}(ds,dz)\bigg).
\end{split}
\end{equation*}
This, together with \eqref{c3e0}, conclude the desired result.
\end{proof}

\subsection{Proof of Lemma \ref{c3lemma1}}
\begin{proof} Observe that
\begin{equation*}\begin{split} 
&\dfrac{d\widehat{\P}}{d \widehat{Q}_k^{\theta_1,\theta}}-1=\int_0^1\int_{t_k}^{t_{k+1}}\left(b(\theta,X_t)-b(\theta_1,X_t)\right)^{\ast}\sigma^{-1}(X_t)\\
&\qquad \left(dB_t-\alpha\sigma^{-1}(X_t)\left(b(\theta,X_t)-b(\theta_1,X_t)\right)dt\right)\dfrac{d\widehat{\P}^{\alpha}}{d \widehat{Q}_k^{\theta_1,\theta}}d\alpha.
\end{split}
\end{equation*}

Consider the process $W=(W_t, t\in[t_k,t_{k+1}])$ defined by
\begin{equation*}
W_t:=B_t-\alpha \int_{t_k}^{t}\sigma^{-1}(X_s)\left(b(\theta,X_s)-b(\theta_1,X_s)\right)ds.
\end{equation*}
By Girsanov's theorem, $W$ is a Brownian  motion under $\widehat{\P}^{\alpha}$. 

Using Girsanov's theorem, H\"older's and Burkholder-David-Gundy's inequalities, the mean value theorem, and hypotheses {\bf(A2)}, {\bf(A4)}\textnormal{(b)}, together with Lemma \ref{c3moment3} \textnormal{(ii)}, we get that
\begin{align*}
&\left\vert\E_{\widehat{Q}_k^{\theta_1,\theta}}\left[V\left(\dfrac{d\widehat{\P}}{d \widehat{Q}_k^{\theta_1,\theta}}-1\right)\Big\vert X_{t_k}^{\theta}\right]\right\vert\\
&=\left\vert\int_0^1 \E_{\widehat{\P}^{\alpha}}\left[V\int_{t_k}^{t_{k+1}}\left(b(\theta,X_t)-b(\theta_1,X_t)\right)^{\ast}\sigma^{-1}(X_t)dW_t\Big\vert X_{t_k}^{\theta}\right] d\alpha\right\vert\\
&\leq\int_0^1\left( \E_{\widehat{\P}^{\alpha}}\left[\vert V\vert^p\Big\vert X_{t_k}^{\theta}\right]\right)^{\frac{1}{p}}\left( \E_{\widehat{\P}^{\alpha}}\left[\left\vert\int_{t_k}^{t_{k+1}}\left(b(\theta,X_t)-b(\theta_1,X_t)\right)^{\ast}\sigma^{-1}(X_t)dW_t\right\vert^q\Big\vert X_{t_k}^{\theta}\right]\right)^{\frac{1}{q}}d\alpha\\
&\leq\dfrac{C}{\sqrt{n}}\left(1+\vert X_{t_k}^{\theta}\vert^{q_0}\right)\int_0^1\left(\E_{\widehat{\P}^{\alpha}}\left[\vert V\vert^p\Big\vert X_{t_k}^{\theta}\right]\right)^{\frac{1}{p}}d\alpha,
\end{align*}
for some constants $C, q_0>0$, where $p,q>1$ and $\frac{1}{p}+\frac{1}{q}=1$. Thus, the result follows.
\end{proof}

\subsection{Transition density estimates}
For any $t>s$ and $i\geq 0$, we denote by $q_{(i)}^{\theta}(t-s,x,y)$ the transition density of $X_t^{\theta}$ conditioned on $X_s^{\theta}=x$ and $N_t-N_s=i$, where $N_t=N([0,t]\times\R^d)$, $t\geq 0$ is a Poisson process with intensity $\lambda=\int_{\R^d}\nu(dz)$. That is, 
\begin{equation}\label{c3density}\begin{split}
p^{\theta}(t-s,x,y)=\sum_{i=0}^{\infty}q_{(i)}^{\theta}(t-s,x,y)e^{-\lambda(t-s)}\frac{(\lambda(t-s))^i}{i!}.
\end{split}
\end{equation}

For any $t>s$ and $i\geq 1$, we denote by $q_{(i)}^{\theta}(t-s,x,y;z_1,\ldots,z_i)$ the transition density of $X_t^{\theta}$ conditioned on $X_s^{\theta}=x, N_t-N_s=i$ and $\widehat{\Lambda}_{[s,t]}=\{z_1,\ldots, z_i\}$, where $\widehat{\Lambda}_{[s,t]}$ are the jump amplitudes of $\widehat{Z}$ on the interval $[s,t]$, i.e., $\widehat{\Lambda}_{[s,t]}:=\{\Delta \widehat{Z}_u; s\leq u\leq t\}$.

We next show the upper bound estimates for the transition density $q_{(0)}^{\theta}(t-s,x,y)$ and $q_{(1)}^{\theta}(\Delta_n,x,y;z)$. For this, we transform equation \eqref{c3eq1} by introducing a new $\R^d$-valued process $V^{\theta}=(V_t^{\theta})_{t\geq 0}$ defined by $V_t^{\theta}:=f(X_t^{\theta})=\frac{X_t^{\theta}}{\sqrt{1+\vert X_t^{\theta}\vert^2}}$. Notice that $\vert V_t^{\theta}\vert <1$, for all $t\geq 0$. On the other hand,
$$
\nabla f(x)=\frac{1}{\left(1+\vert x\vert^2\right)^{\frac{3}{2}}}\left(\textup{I}_d+A(x)\right),
$$
where the matrix $A(x)=\vert x\vert^2\textup{I}_d-xx^{\ast}$ is symmetric and non-negative definite, for all $x\in\R^d$. Moreover, it is easy to check that $\det\nabla f(x)=(1+\vert x\vert^2)^{-\frac{d}{2}-1}>0$, for any $x\in\R^d$.

By It\^o's formula, $V^{\theta}$ satisfies the following SDE with jumps
\begin{align}
dV_t^{\theta}&=\left(\nabla f(X_t^{\theta})b(\theta,X_t^{\theta})+\dfrac{1}{2}\textup{tr}\left\{\nabla^2 f(X_t^{\theta})\sigma^2(X_t^{\theta})\right\}-\int_{\R_0^d}\nabla f(X_t^{\theta})c(X_{t-}^{\theta},z)\nu(dz)\right)dt\notag\\
&\qquad+\nabla f(X_t^{\theta})\sigma(X_t^{\theta})dB_t+\int_{\R_0^d}\left(f(X_{t-}^{\theta}+c(X_{t-}^{\theta},z))-f(X_{t-}^{\theta})\right)N(dt,dz),\label{V}
\end{align}
where $X_t^{\theta}=\frac{V_t^{\theta}}{\sqrt{1-\vert V_t^{\theta}\vert^2}}$ and $\textup{tr}\{\nabla^2 f(X_t^{\theta})\sigma^2(X_t^{\theta})\}=(\textup{tr}\{\nabla^2 f_i(X_t^{\theta})\sigma^2(X_t^{\theta})\})_{i\in\{1,\ldots,d\}}$. Observe that the drift and diffusion coefficients of equation \eqref{V} are uniformly bounded and continuously differentiable with bounded partial derivatives. On the other hand, the diffusion coefficient does not satisfy an uniform ellipticity condition but satisfies, by hypothesis {\bf(A8)}(b), an ellipticity assumption in all $\R^d$. The new jump coefficient is given by $\widetilde{c}(v,z):=f(\frac{v}{\sqrt{1-\vert v\vert^2}}+c(\frac{v}{\sqrt{1-\vert v\vert^2}},z))-v$, where $\vert v\vert <1$. Note that when $X^{\theta}$ has a jump at time $\tau$ with jump size $c(X_{\tau-}^{\theta},\Delta \widehat{Z}_{\tau})$, then $V^{\theta}$ has the jump size $\widetilde{c}(V_{\tau-}^{\theta},\Delta \widehat{Z}_{\tau})$ at the same time.

For any $t>s$, we denote by $q_{(0)}^{\theta,V}(t-s,x,y)$ the transition density of $V_t^{\theta}$ conditioned on $V_s^{\theta}=x$ and $N_t-N_s=0$. Under conditions {\bf(A1)}, {\bf(A2)}, {\bf(A4)}{(a)} and {\bf(A8)}(b), by \cite[Corollary 3.25]{KS85} and \cite[Theorem 9 iii)]{B03}, for any $\theta\in\Theta$, there exist constants $c, C>1$ such that for all $0<t\leq 1$, and $x, y \in\R^d$,
\begin{equation}\label{c3eq3}
q_{(0)}^{\theta,V}(t,x,y)\leq \dfrac{C}{t^{d/2}}e^{-\frac{\vert y-x\vert^2}{ct}}.
\end{equation}

For any $t>s$, we denote by $q_{(1)}^{\theta,V}(t-s,x,y;z)$ the transition density of $V_t^{\theta}$ conditioned on $V_s^{\theta}=x, N_t-N_s=1$ and $\widehat{\Lambda}_{[s,t]}=\{z\}$. Then, $q_{(1)}^{\theta,V}(\Delta_n,x,y;z)$ satisfies the following estimate.
\begin{lemma}\label{q1v} Under conditions {\bf(A1)}, {\bf(A2)}, {\bf(A4)}\textnormal{(a)} and {\bf(A8)}, for all $\theta\in\Theta$, there exist constants $C_1, C_2>0$ such that for all $x, y\in \R^d$, and $z \in \R_0^d$,
\begin{equation*}
q_{(1)}^{\theta,V}(\Delta_n,x,y;z)\leq\dfrac{C_1\beta^d(z)}{\eta(z)\Delta_n^{d/2}}e^{-\frac{\left\vert y-x-\widetilde{c}(x,z)\right\vert^2}{C_2\beta^2(z)\Delta_n}},
\end{equation*}
where $\eta(z)$ and $\beta(z)$ are defined in {\bf(A8)}\textnormal{(a)}.
\end{lemma}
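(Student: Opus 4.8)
The plan is to exploit the structure of the conditioning: given that $\widehat Z$ has exactly one jump in $[s,s+\Delta_n]$ with amplitude $z$, the process $V^\theta$ follows the jump-free diffusion governed by the continuous part of \eqref{V} up to the jump time $\tau$ (which, conditionally, is uniform on $[s,s+\Delta_n]$ and independent of the Brownian driver $B$), at $\tau$ it jumps from $V_{\tau-}^\theta=w$ to $w+\widetilde c(w,z)$, and then again follows the jump-free diffusion until time $s+\Delta_n$. Since the transition density of that jump-free diffusion is precisely $q_{(0)}^{\theta,V}$, writing $r=\tau-s\in[0,\Delta_n]$ and integrating over the unknown pre-jump location $w$ gives the representation
\begin{equation*}
q_{(1)}^{\theta,V}(\Delta_n,x,y;z)=\frac{1}{\Delta_n}\int_0^{\Delta_n}\int_{\R^d}q_{(0)}^{\theta,V}(r,x,w)\,q_{(0)}^{\theta,V}(\Delta_n-r,w+\widetilde c(w,z),y)\,dw\,dr,
\end{equation*}
where the independence of $B$ and $N$ ensures that conditioning on the number and amplitude of jumps does not alter the law of the diffusive part.

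Next I would insert the Gaussian upper bound \eqref{c3eq3}, valid because under {\bf(A1)}, {\bf(A2)}, {\bf(A4)}\textnormal{(a)} and {\bf(A8)}\textnormal{(b)} equation \eqref{V} has bounded smooth coefficients and an elliptic diffusion matrix, to both factors; the integrand is then bounded by $\frac{C^2}{r^{d/2}(\Delta_n-r)^{d/2}}\exp(-\frac{|w-x|^2}{cr})\exp(-\frac{|y-w-\widetilde c(w,z)|^2}{c(\Delta_n-r)})$. The decisive step is the change of variables $w\mapsto\widetilde w:=\psi(w)=w+\widetilde c(w,z)-x-\widetilde c(x,z)$ of {\bf(A8)}\textnormal{(a)} (which is $C^1$ by {\bf(A4)} and globally invertible by assumption), under which $w+\widetilde c(w,z)=\widetilde w+x+\widetilde c(x,z)$, so the second exponential becomes $\exp(-\frac{|y-x-\widetilde c(x,z)-\widetilde w|^2}{c(\Delta_n-r)})$. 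The Jacobian is controlled by $|\det\nabla\psi(w)|^{-1}\le\eta(z)^{-1}$, and the bound $|\nabla\psi^{-1}(v)u|\ge|u|/\beta(z)$, read as $\|\nabla\psi(w)\|_{\mathrm{op}}\le\beta(z)$ for every $w$, gives $|w-x|=|\psi^{-1}(\widetilde w)-\psi^{-1}(0)|\ge|\widetilde w|/\beta(z)$, hence $\exp(-\frac{|w-x|^2}{cr})\le\exp(-\frac{|\widetilde w|^2}{c\beta^2(z)r})$. (We may assume $\beta(z)\ge1$, enlarging the constant in {\bf(A8)}\textnormal{(a)} if needed, which only weakens that hypothesis.)

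I would then compute the inner integral over $\widetilde w$ as a Gaussian convolution,
\begin{equation*}
\int_{\R^d}e^{-\frac{|\widetilde w|^2}{a}}e^{-\frac{|v-\widetilde w|^2}{b}}\,d\widetilde w=\Big(\frac{\pi ab}{a+b}\Big)^{d/2}e^{-\frac{|v|^2}{a+b}},
\end{equation*}
with $a=c\beta^2(z)r$, $b=c(\Delta_n-r)$ and $v=y-x-\widetilde c(x,z)$. Since $a+b=c\Delta_n+cr(\beta^2(z)-1)\in[c\Delta_n,c\beta^2(z)\Delta_n]$, one gets $\frac{ab}{a+b}\le\frac{c\beta^2(z)r(\Delta_n-r)}{\Delta_n}$, so that $\frac{1}{r^{d/2}(\Delta_n-r)^{d/2}}(\frac{\pi ab}{a+b})^{d/2}\le(\pi c\beta^2(z)/\Delta_n)^{d/2}$ is constant in $r$, while $e^{-|v|^2/(a+b)}\le e^{-|v|^2/(c\beta^2(z)\Delta_n)}$. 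Integrating the constant over $r\in[0,\Delta_n]$ yields a factor $\Delta_n$ cancelling the prefactor $1/\Delta_n$, and collecting the constants gives exactly the claimed bound with $C_1=C^2(\pi c)^{d/2}$ and $C_2=c$.

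The main obstacle is the change-of-variables step and its quantitative control: one must turn the pointwise hypotheses of {\bf(A8)}\textnormal{(a)} into the operator-norm bound on $\nabla\psi$ and the determinant bound that produce the stated powers $\beta^d(z)$ and $\eta(z)^{-1}$ in the $z$-dependence, and one must be careful that $\psi(\cdot,z)$ is a genuine global $C^1$-diffeomorphism on the relevant domain. The representation formula in the first step, though intuitively transparent, also deserves a careful statement since the whole estimate rests on it.
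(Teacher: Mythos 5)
Your proposal is correct and follows essentially the same route as the paper: the Chapman--Kolmogorov representation with the jump time uniformly distributed on the observation interval, insertion of the Gaussian upper bound \eqref{c3eq3} into both factors, the change of variables $u=\psi(v)=v+\widetilde c(v,z)-x-\widetilde c(x,z)$ with Jacobian controlled by $\eta(z)$ and displacement controlled by $\beta(z)$, and explicit Gaussian convolution in the inner integral followed by the trivial $dr$-integration. The only (harmless) cosmetic differences are that you normalize $\beta(z)\ge1$ at the outset whereas the paper carries $\beta^2(z)\wedge1$ and $\beta^2(z)\vee1$ through and absorbs them at the end, and that you derive the lower bound $|w-x|\ge|\widetilde w|/\beta(z)$ via the operator-norm bound $\|\nabla\psi\|_{\mathrm{op}}\le\beta(z)$ rather than via a mean-value argument applied directly to $\psi^{-1}$ — a formulation that is, if anything, slightly cleaner for vector-valued maps.
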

\begin{proof}
Using the Chapman-Kolmogorov equation and the fact that the distribution of the jump time conditioned on $N_{t_{k+1}}-N_{t_{k}}=1$ is uniform on $[t_k,t_{k+1}]$, together with (\ref{c3eq3}), we get that for some constants $c, C>1$,
\begin{equation*} \begin{split}
q_{(1)}^{\theta,V}(\Delta_n,x,y;z)&=\dfrac{1}{\Delta_n}\int_{t_k}^{t_{k+1}}\int_{\mathbb{R}^d}q_{(0)}^{\theta,V}(t-t_k,x,v)q_{(0)}^{\theta,V}(t_{k+1}-t,v+\widetilde{c}(v,z),y)dvdt\\
&\leq\dfrac{C}{\Delta_n}\int_{t_k}^{t_{k+1}}\int_{\mathbb{R}^d}\dfrac{1}{(t-t_k)^{d/2}}e^{-\frac{\vert v-x\vert^2}{c(t-t_k)}}\dfrac{1}{(t_{k+1}-t)^{d/2}}e^{-\frac{\vert y-v-\widetilde{c}(v,z)\vert^2}{c(t_{k+1}-t)}}dvdt.
\end{split}
\end{equation*}

We next use the change of variables $u:=\psi(v):=v+\widetilde{c}(v,z)-x-\widetilde{c}(x,z)$. Observe that $\psi(x)=0$ and the gradient satisfies
$\det\nabla\psi(v)=\det\nabla f(\frac{v}{\sqrt{1-\vert v\vert^2}}+c(\frac{v}{\sqrt{1-\vert v\vert^2}},z))>0$, for all $\vert v\vert<1$ and $z\in\R_0^d$. Therefore, the mapping $v\mapsto\psi(v)$ admits an inverse function $\psi^{-1}$. On the other hand, $\det\nabla\psi(v)\geq\eta(z)$, and using the mean value theorem there exists $\alpha\in(0,1)$ such that
$$
\left\vert\psi^{-1}(u)-\psi^{-1}(0)\right\vert^2=\left\vert\nabla\psi^{-1}(\alpha u) u\right\vert^2\geq \dfrac{\vert u\vert^2}{\beta^2(z)},
$$
where we have used hypothesis {\bf(A8)}\textnormal{(a)}. Therefore, 
\begin{align*}
&q_{(1)}^{\theta,V}(\Delta_n,x,y;z)\\
&\leq\dfrac{C}{\Delta_n}\int_{t_k}^{t_{k+1}}\int_{\mathbb{R}^d}\dfrac{1}{(t-t_k)^{\frac{d}{2}}}e^{-\frac{\left\vert\psi^{-1}(u)-\psi^{-1}(0)\right\vert^2}{c(t-t_k)}} \dfrac{1}{(t_{k+1}-t)^{d/2}}e^{-\frac{\left\vert y-u-x-\widetilde{c}(x,z)\right\vert^2}{c(t_{k+1}-t)}}\dfrac{1}{\vert\det\nabla\psi(v)\vert}dudt\\
&\leq\dfrac{C}{\eta(z)\Delta_n}\int_{t_k}^{t_{k+1}}\int_{\mathbb{R}^d}\dfrac{1}{(t-t_k)^{d/2}}e^{-\frac{\vert u\vert^2}{c\beta^2(z)(t-t_k)}}\dfrac{1}{(t_{k+1}-t)^{\frac{d}{2}}}e^{-\frac{\left\vert y-u-x-\widetilde{c}(x,z)\right\vert^2}{c(t_{k+1}-t)}}dudt\\
&=\dfrac{C\beta^d(z)}{\eta(z)\Delta_n}\int_{t_k}^{t_{k+1}}\dfrac{1}{(c\left(\beta^2(z)(t-t_k)+t_{k+1}-t\right))^{d/2}} e^{-\frac{\left\vert y-x-\widetilde{c}(x,z)\right\vert^2}{c\left(\beta^2(z)(t-t_k)+t_{k+1}-t\right)}}dt.
\end{align*}

Next, observe that
\begin{align*}
c\left(\beta^2(z)\wedge 1\right)\Delta_n \leq c\left(\beta^2(z)(t-t_k)+t_{k+1}-t\right)\leq c\left(\beta^2(z)\vee 1\right)\Delta_n,
\end{align*}
from where we deduce that
\begin{equation*} \begin{split}
q_{(1)}^{\theta,V}(\Delta_n,x,y;z)\leq\dfrac{C\beta^d(z)}{\eta(z)(\left(\beta^2(z)\wedge 1\right)\Delta_n)^{d/2}}e^{-\frac{\left\vert y-x-\widetilde{c}(x,z)\right\vert^2}{c\left(\beta^2(z)\vee 1\right)\Delta_n}},
\end{split}
\end{equation*}
for some constant $C>0$. Therefore, the desired result follows.
\end{proof}

\begin{lemma}\label{c3lemma8} Under conditions {\bf(A1)}, {\bf(A2)}, {\bf(A4)}\textnormal{(a)} and {\bf(A8)}, for all $\theta\in\Theta$, there exist constants $c, C>1$ and $C_1, C_2>0$ such that for all $t>s$, $x, y\in \R^d$, and $z \in \R_0^d$,
\begin{align}
&q_{(0)}^{\theta}(t-s,x,y)\leq\dfrac{C}{(t-s)^{d/2}}e^{-\frac{\left\vert f(y)-f(x)\right\vert^2}{c(t-s)}}\det\nabla f(y),\label{q0}\\
&q_{(1)}^{\theta}(\Delta_n,x,y;z)\leq \dfrac{C_1\beta^d(z)}{\eta(z)\Delta_n^{d/2}}e^{-\frac{\left\vert f(y)-f(x)-\widetilde{c}(f(x),z)\right\vert^2}{C_2\beta^2(z)\Delta_n}}\det\nabla f(y).\label{q1}
\end{align}
\end{lemma}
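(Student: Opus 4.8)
The plan is to transfer the Gaussian upper bounds already available for the transformed process $V^\theta=f(X^\theta)$ back to the original process $X^\theta$ via the change-of-variables formula for densities. Recall that $X^\theta_t = \frac{V^\theta_t}{\sqrt{1-|V^\theta_t|^2}}$, i.e.\ $X^\theta_t = f^{-1}(V^\theta_t)$, so that the two conditional laws are related by the diffeomorphism $f$. Concretely, for a fixed number of jumps $i\in\{0,1\}$ on $[s,t]$ (and a fixed jump amplitude $z$ in the case $i=1$), the density of $X^\theta_t$ is obtained from that of $V^\theta_t$ by
$$
q_{(i)}^{\theta}(t-s,x,y) = q_{(i)}^{\theta,V}\big(t-s,f(x),f(y)\big)\,|\det\nabla f(y)|.
$$
This is the key identity; it holds because conditioning on the jump structure is a measurable operation that commutes with the deterministic transformation $v\mapsto f^{-1}(v)$, and the Jacobian of $f$ at $y$ is exactly $\det\nabla f(y)=(1+|y|^2)^{-d/2-1}>0$, as computed in the paragraph preceding Lemma~\ref{q1v}.

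First I would record this change-of-variables identity carefully for $i=0$ and $i=1$, making sure the conditioning events match: for $i=0$ the event $\{N_t-N_s=0\}$ is identical for $X^\theta$ and $V^\theta$ (no jumps means the paths are deterministic transforms of one another), and for $i=1$ the event $\{N_t-N_s=1,\ \widehat\Lambda_{[s,t]}=\{z\}\}$ is likewise shared, with the jump of $V^\theta$ being $\widetilde c(V^\theta_{\tau-},z)$ at the same time $\tau$ at which $X^\theta$ jumps, as already noted in the text. Then \eqref{q0} follows immediately by substituting the bound \eqref{c3eq3} with $x$ replaced by $f(x)$ and $y$ by $f(y)$:
$$
q_{(0)}^{\theta}(t-s,x,y) = q_{(0)}^{\theta,V}(t-s,f(x),f(y))\,\det\nabla f(y) \leq \frac{C}{(t-s)^{d/2}}e^{-\frac{|f(y)-f(x)|^2}{c(t-s)}}\det\nabla f(y).
$$
Similarly, \eqref{q1} follows by substituting into the bound of Lemma~\ref{q1v}: here one must check that $\widetilde c$ evaluated at the transformed point is consistent, i.e.\ that the term $\widetilde c(x,z)$ in Lemma~\ref{q1v} becomes $\widetilde c(f(x),z)$ in \eqref{q1}. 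Indeed $V^\theta$ starts from $f(x)$ when $X^\theta$ starts from $x$, and $\widetilde c(v,z)=f(\frac{v}{\sqrt{1-|v|^2}}+c(\frac{v}{\sqrt{1-|v|^2}},z))-v$ is precisely the jump increment of $V^\theta$ from state $v$, so Lemma~\ref{q1v} applied with initial point $f(x)$ gives exactly the exponent $|f(y)-f(x)-\widetilde c(f(x),z)|^2$.

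The main obstacle, and the only place where some care is genuinely needed, is justifying the change-of-variables identity for the \emph{conditional} densities rather than for the full densities. One must argue that the regular conditional law of $X^\theta_t$ given $X^\theta_s=x$, $N_t-N_s=i$ (and, for $i=1$, the jump amplitude) admits a density, that this density is the push-forward under $f^{-1}$ of the corresponding conditional density of $V^\theta_t$, and that $f$ is a $C^1$-diffeomorphism from $\R^d$ onto the open unit ball with non-vanishing Jacobian so that the classical smooth change-of-variables formula applies. The existence of $q_{(0)}^{\theta,V}$ and $q_{(1)}^{\theta,V}$ was established above (via \cite{KS85,B03} and Lemma~\ref{q1v}), and $f$ being a global diffeomorphism with $\det\nabla f>0$ everywhere was verified in the text; combining these, the push-forward density exists and equals the stated expression. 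Everything else is a direct substitution, so once the identity is in place the two estimates \eqref{q0} and \eqref{q1} drop out.
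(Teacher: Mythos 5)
Your proof is correct and follows essentially the same route as the paper: both derive the identity $q_{(i)}^{\theta}(t-s,x,y)=q_{(i)}^{\theta,V}(t-s,f(x),f(y))\det\nabla f(y)$ from the diffeomorphism $f$ (with matching jump-conditioning events), and then substitute the bounds \eqref{c3eq3} and Lemma~\ref{q1v}. The extra discussion you give of why the change-of-variables applies to the conditional laws is the only addition, and it is consistent with what the paper leaves implicit.
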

\begin{proof}
Since $\det\nabla f(y)>0$, for all $x, y\in\R^d$ and $t>s$, we have that 
\begin{equation*}\begin{split}
\P\left(X_t^{\theta}\in dy\Big\vert X_s^{\theta}=x, N_t-N_s=0\right)=\P\left(V_t^{\theta}\in df(y)\Big\vert V_s^{\theta}=f(x), N_t-N_s=0\right),
\end{split}
\end{equation*}
which, together with (\ref{c3eq3}), implies that
\begin{equation*}\begin{split}
q_{(0)}^{\theta}(t-s,x,y)=q_{(0)}^{\theta,V}\left(t-s,f(x),f(y)\right)\, \det\nabla f(y) \leq\dfrac{C}{(t-s)^{d/2}}e^{-\frac{\left\vert f(y)-f(x)\right\vert^2}{c(t-s)}}\det\nabla f(y).
\end{split}
\end{equation*}
This concludes \eqref{q0}. Similarly,
\begin{equation*}\begin{split}
&\P\left(X_{t_{k+1}}^{\theta}\in dy\Big\vert X_{t_{k}}^{\theta}=x,N_{t_{k+1}}-N_{t_{k}}=1, \widehat{\Lambda}_{[t_k,t_{k+1}]}=\{z\}\right)\\
&=\P\left(V_{t_{k+1}}^{\theta}\in df(y)\Big\vert V_{t_{k}}^{\theta}=f(x), N_{t_{k+1}}-N_{t_{k}}=1, \widehat{\Lambda}_{[t_k,t_{k+1}]}=\{z\}\right),
\end{split}
\end{equation*}
which, together with Lemma \ref{q1v}, implies that
\begin{equation*}\begin{split}
q_{(1)}^{\theta}(\Delta_n,x,y;z)&=q_{(1)}^{\theta,V}(\Delta_n,f(x),f(y);z)\,\det\nabla f(y)\\
&\leq\dfrac{C_1\beta^d(z)}{\eta(z)\Delta_n^{d/2}}e^{-\frac{\left\vert f(y)-f(x)-\widetilde{c}(f(x),z)\right\vert^2}{C_2\beta^2(z)\Delta_n}}\det\nabla f(y).
\end{split}
\end{equation*}
Therefore, the desired result follows.
\end{proof}

Under the conditions in Subsection \ref{bounded}, the upper bound estimates of the transition densities are as follows.
\begin{lemma} Under conditions {\bf(A1')}, {\bf(A2)}, {\bf(A4)}\textnormal{(a)} and {\bf(A8')}, for all $\theta\in\Theta$, there exist constants $C, c>1$,  $C_1, C_2>0$ such that for all $0\leq t\leq 1$, $x, y\in \R^d$, and $z \in \R_0^d$,
	\begin{align}
	&q_{(0)}^{\theta}(t,x,y)\leq\dfrac{C}{t^{d/2}}e^{-\frac{\vert y-x\vert^2}{ct}},\label{bq0}\\
	&q_{(1)}^{\theta}(\Delta_n,x,y;z)\leq \dfrac{C_1}{\Delta_n^{d/2}}e^{-\frac{\left\vert y-x-c(x,z)\right\vert^2}{C_2\Delta_n}}.\label{bq1}
	\end{align}
\end{lemma}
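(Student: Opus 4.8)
The plan is to mimic the proof of Lemma \ref{c3lemma8} but now without performing the change of variables via $f$, since under {\bf(A1')} the drift and jump coefficients of equation \eqref{c3eq1} are themselves bounded. First I would observe that, conditioned on $N_t-N_s=0$, the process $X^\theta$ solves on $[s,t]$ an SDE with no jumps, bounded drift $b(\theta,\cdot)$, Lipschitz and uniformly elliptic diffusion coefficient $\sigma$, and continuously differentiable coefficients with bounded derivatives (using {\bf(A1')}, {\bf(A2)}, {\bf(A4)}\textnormal{(a)}). Hence the Aronson-type Gaussian upper bound of \cite[Corollary 3.25]{KS85} and \cite[Theorem 9 iii)]{B03} applies directly to give
\begin{equation*}
q_{(0)}^{\theta}(t,x,y)\leq\dfrac{C}{t^{d/2}}e^{-\frac{\vert y-x\vert^2}{ct}},\qquad 0<t\leq 1,
\end{equation*}
which is \eqref{bq0}; this replaces the estimate \eqref{q0} and its use of $\det\nabla f(y)$ and $f(y)-f(x)$.

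For \eqref{bq1} I would reproduce the argument of Lemma \ref{q1v} verbatim with $\widetilde c$ replaced by $c$ and with $V^\theta$ replaced by $X^\theta$. Conditioning on $N_{t_{k+1}}-N_{t_k}=1$ and the jump amplitude being $z$, the jump time is uniform on $[t_k,t_{k+1}]$, so by Chapman-Kolmogorov
\begin{equation*}
q_{(1)}^{\theta}(\Delta_n,x,y;z)=\dfrac{1}{\Delta_n}\int_{t_k}^{t_{k+1}}\int_{\R^d}q_{(0)}^{\theta}(t-t_k,x,v)\,q_{(0)}^{\theta}(t_{k+1}-t,v+c(v,z),y)\,dv\,dt.
\end{equation*}
Plugging in \eqref{bq0}, I would then make the change of variables $u:=\psi(v):=v+c(v,z)-x-c(x,z)$, whose Jacobian determinant is $\det(\textup{I}_d+\nabla_x c(v,z))$ and is bounded below by $\eta$ by {\bf(A8')}, and whose inverse satisfies $\vert\psi^{-1}(u)-\psi^{-1}(0)\vert\geq\vert u\vert/\beta$ by the mean value theorem and the second part of {\bf(A8')}. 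After the substitution, the Gaussian in the first factor becomes $e^{-\vert u\vert^2/(c\beta^2(t-t_k))}$, the $u$-integral is a convolution of two Gaussians giving a Gaussian in $\vert y-x-c(x,z)\vert$ with variance proportional to $\beta^2(t-t_k)+(t_{k+1}-t)$, and bounding this variance above and below by constants times $\Delta_n$ and integrating in $t$ yields
\begin{equation*}
q_{(1)}^{\theta}(\Delta_n,x,y;z)\leq\dfrac{C_1}{\Delta_n^{d/2}}e^{-\frac{\vert y-x-c(x,z)\vert^2}{C_2\Delta_n}},
\end{equation*}
since now $\eta$ and $\beta$ are genuine constants (not functions of $z$), they can be absorbed into $C_1,C_2$.

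Since this lemma merely records the bounded-drift analogues of \eqref{q0}--\eqref{q1}, there is essentially no new obstacle: the only point requiring a word of care is checking that the hypotheses of \cite{KS85} and \cite{B03} are met for the jump-free SDE governing $X^\theta$ between jumps under {\bf(A1')} (boundedness and smoothness of the coefficients together with {\bf(A2)}), and that {\bf(A8')} is exactly what is needed to run the change-of-variables step — the roles of $\eta(z),\beta(z)$ in {\bf(A8)} being played by the constants $\eta,\beta$ here. Everything else is a routine specialization of the computations already carried out in Lemmas \ref{q1v} and \ref{c3lemma8}.
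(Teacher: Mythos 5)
Your proof is correct and follows essentially the same approach as the paper, which also reduces \eqref{bq1} to a Chapman--Kolmogorov computation plus the change of variables $u=\psi(v)=v+c(v,z)-x-c(x,z)$ controlled by {\bf(A8')}. The only cosmetic difference is the citation for \eqref{bq0}: the paper invokes Azencott \cite[page 478]{A84}, whereas you reuse the Kusuoka--Stroock/Bally bounds (\cite[Corollary 3.25]{KS85}, \cite[Theorem 9 iii)]{B03}) already applied to the transformed process in \eqref{c3eq3}; both yield the same Aronson-type estimate once one notes that, conditioned on no jumps, the effective drift is $b(\theta,\cdot)-\int_{\R_0^d}c(\cdot,z)\nu(dz)$, which under {\bf(A1')} remains bounded (and Lipschitz with bounded derivative) because $\int\zeta\,d\nu<\infty$ by Cauchy--Schwarz together with $\lambda<\infty$.
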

\begin{proof} 
	The estimate \eqref{bq0} follows from Azencott \cite[page 478]{A84}. Using the change of variables $u:=\psi(v):=v+c(v,z)-x-c(x,z)$, and condition {\bf(A8')}, the proof of \eqref{bq1} follows along the same lines as that of Lemma \eqref{q1v} and is therefore omitted.
\end{proof}

\subsection{Expression of conditional expectations via transition densities}
\label{expression}

Consider the events
$
\widehat{J}_{i,k}=\{ N_{t_{k+1}}-N_{t_{k}}=i\}$ and $
\widetilde{J}_{i,k}=\{ M_{t_{k+1}}-M_{t_{k}}=i\}$, for $k \in \{0,...,n-1\}$ and $i\in\{0,1\}$, where $M_t=M([0,t]\times \R^d)$, $t\geq 0$ is a Poisson process with intensity $\lambda$. We denote by $\widetilde{\Lambda}_{[s,t]}$ the jump amplitudes of $\widetilde{Z}$ on the interval $[s,t]$, i.e, $\widetilde{\Lambda}_{[s,t]}:=\{\Delta\widetilde{Z}_u; s\leq u\leq t\}$, and by $\mu(dz)=\frac{\nu(dz)}{\lambda}$ the jump size distribution of $\widetilde{Z}$.

For each $k \in \{0,...,n-1\}$, we consider the events
\begin{equation*}\begin{split}
&\widehat{A}_{k,n}=\left\{\left\vert\int_{t_k}^{t_{k+1}}\int_{\R_0^d}zN(ds,dz)\right\vert\in[\rho_1\Delta_n^{\upsilon},\rho_2\Delta_n^{-\gamma}]\right\},\\
&\widetilde{A}_{k,n}=\left\{\left\vert\int_{t_k}^{t_{k+1}}\int_{\R_0^d}zM(ds,dz)\right\vert\in[\rho_1\Delta_n^{\upsilon},\rho_2\Delta_n^{-\gamma}]\right\},
\end{split}
\end{equation*}
where $\rho_1,\upsilon$ are constants from hypothesis {\bf (A7)}, and $\rho_2$, $\gamma$ are some positive constants with $\gamma\in(0,\frac{1}{2})$. We then denote by $\widehat{A}_{k,n}^c$ and $\widetilde{A}_{k,n}^c$ their corresponding complementary events.

Set $I={\{z\in\R_0^d: \rho_1\Delta_n^{\upsilon}\leq\vert z \vert\leq\rho_2\Delta_n^{-\gamma}\}}$ or $I={\{a\in\R_0^d: \rho_1\Delta_n^{\upsilon}\leq\vert a \vert\leq\rho_2\Delta_n^{-\gamma}\}}$, and recall that $e_k(\theta)=\left(\partial_{\theta}
b(\theta,X_{t_k})\right)^{\ast}(\sigma\sigma^{\ast})^{-1}(X_{t_k})$. As in \cite[Lemma 2.2]{KNT14}, we have the following expressions for the conditional expectations in terms of the transition densities. 
\begin{lemma}\label{c3lemma7} Under conditions {\bf(A1)}, {\bf(A2)} and {\bf(A4)}\textnormal{(a)}, for all $k \in \{0,...,n-1\}$ and $\theta\in\Theta$,
\begin{equation} \label{c31}\begin{split}
&\E_{\widehat{Q}_k^{\theta,\theta_0}}\bigg[{\bf 1}_{\widehat{J}_{0,k}}\bigg(e_k(\theta)\widetilde{\E}_{X_{t_k}}^{\theta}\left[{\bf 1}_{\widetilde{J}_{1,k}}{\bf{1}}_{\widetilde{A}_{k,n}}\int_{t_k}^{t_{k+1}}
\int_{\R_0^d}c(Y_{t_k}^{\theta},z)M(ds,dz)\bigg\vert Y_{t_{k+1}}^{\theta}=X_{t_{k+1}}\right]\bigg)^2\Big\vert X_{t_{k}}\bigg]\\
&=\int_{\R^d}\left(\dfrac{e_k(\theta)\int_{I}q_{(1)}^{\theta}(\Delta_n,X_{t_k},y;a)c\left(X_{t_k},a\right)\mu(da)e^{-\lambda\Delta_n}\lambda\Delta_n}{p^{\theta}(\Delta_n,X_{t_k},y)}\right)^2q_{(0)}^{\theta}(\Delta_n,X_{t_k},y)e^{-\lambda\Delta_n}dy,
\end{split}
\end{equation}
\begin{equation} \label{c32} \begin{split}
&\E_{\widehat{Q}_k^{\theta,\theta_0}}\left[{\bf 1}_{\widehat{J}_{1,k}}\left(e_k(\theta){\bf{1}}_{\widehat{A}_{k,n}}\int_{t_k}^{t_{k+1}}\int_{\R_0^d}c(X_{t_k},z)N(ds,dz)\widetilde{\E}_{X_{t_k}}^{\theta}\left[{\bf 1}_{\widetilde{J}_{0,k}}\bigg\vert Y^{\theta}_{t_{k+1}}=X_{t_{k+1}}\right]\right)^2\Big\vert X_{t_{k}}\right]\\
&=\int_{I}\int_{\R^d}\left(\dfrac{q_{(0)}^{\theta}(\Delta_n,X_{t_k},y)e^{-\lambda\Delta_n}}{p^{\theta}(\Delta_n,X_{t_k},y)}\right)^2q_{(1)}^{\theta}(\Delta_n,X_{t_k},y;a)e^{-\lambda \Delta_n}\lambda\Delta_n\left(e_k(\theta)c(X_{t_k},a)\right)^2dy \mu(da),
\end{split}
\end{equation}
and
\begin{equation} \label{c33}
\begin{split}
&\E_{\widehat{Q}_k^{\theta,\theta_0}}\bigg[{\bf 1}_{\widehat{J}_{1,k}}\bigg(e_k(\theta)\bigg({\bf{1}}_{\widehat{A}_{k,n}}\int_{t_k}^{t_{k+1}}\int_{\R_0^d}c(X_{t_k},z)N(ds,dz)\widetilde{\E}_{X_{t_k}}^{\theta}\left[{\bf 1}_{\widetilde{J}_{1,k}}\bigg\vert Y^{\theta}_{t_{k+1}}=X_{t_{k+1}}\right]\\
&\qquad \qquad -\widetilde{\E}_{X_{t_k}}^{\theta}\left[{\bf 1}_{\widetilde{J}_{1,k}}{\bf{1}}_{\widetilde{A}_{k,n}}\int_{t_k}^{t_{k+1}}\int_{\R_0^d}c(Y_{t_k}^{\theta},z)M(ds,dz)\bigg\vert Y^{\theta}_{t_{k+1}}=X_{t_{k+1}}\right]\bigg)\bigg)^2\Big\vert X_{t_{k}}\bigg]\\
&=\int_{I}\int_{\R^d}\left(\dfrac{e_k(\theta)\int_{I}\left(c(X_{t_k},z)-c(X_{t_k},a)\right)q_{(1)}^{\theta}(\Delta_n,X_{t_k},y;a)\mu(da)e^{-\lambda\Delta_n}\lambda \Delta_n}{p^{\theta}(\Delta_n,X_{t_k},y)}\right)^2\\
&\qquad \qquad \times q_{(1)}^{\theta}(\Delta_n,X_{t_k},y;z)e^{-\lambda\Delta_n}\lambda \Delta_n dy\mu(dz).
\end{split}
\end{equation}
\end{lemma}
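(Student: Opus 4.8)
The plan is to prove Lemma \ref{c3lemma7} by disintegrating the conditional expectations with respect to the jump structure (number of jumps and their amplitudes) and then recognizing the resulting conditional densities as the functions $q_{(i)}^\theta$ introduced in Subsection \ref{expression}. The starting observation is that on each interval $[t_k,t_{k+1}]$ the processes driving $X^\theta$ and $Y^\theta$ are independent copies, and the Girsanov weight $\frac{d\widehat\P}{d\widehat Q_k^{\theta,\theta_0}}$ depends only on the Brownian part, so inside $\E_{\widehat Q_k^{\theta,\theta_0}}[\,\cdot\mid X_{t_k}]$ one is really computing expectations of functionals of $X^\theta$ against its own law started at $X_{t_k}$. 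Thus the left-hand sides are genuine transition-density integrals, and the only work is bookkeeping.

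First I would condition on the number of jumps. On the event $\widehat J_{0,k}=\{N_{t_{k+1}}-N_{t_k}=0\}$ the law of $X_{t_{k+1}}^\theta$ given $X_{t_k}$ has density $q_{(0)}^\theta(\Delta_n,X_{t_k},\cdot)$ and the probability of $\widehat J_{0,k}$ is $e^{-\lambda\Delta_n}$, which via \eqref{c3density} is exactly the weight appearing in the claimed formulas; on $\widehat J_{1,k}$ one has one jump, whose amplitude is $\mu(dz)=\nu(dz)/\lambda$-distributed and, conditionally on the amplitude $a$, the density of $X_{t_{k+1}}^\theta$ is $q_{(1)}^\theta(\Delta_n,X_{t_k},y;a)$, with the event having probability $e^{-\lambda\Delta_n}\lambda\Delta_n$. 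The inner conditional expectations $\widetilde\E_{X_{t_k}}^\theta[\,\cdot\mid Y_{t_{k+1}}^\theta=X_{t_{k+1}}]$ are handled the same way but now using Bayes' formula: conditioning a functional of the $\widetilde Z,W$ block on $Y_{t_{k+1}}^\theta=X_{t_{k+1}}$ produces the ratio $q_{(i)}^\theta(\Delta_n,X_{t_k},X_{t_{k+1}};\cdot)/p^\theta(\Delta_n,X_{t_k},X_{t_{k+1}})$, which is why the denominator $p^\theta(\Delta_n,\cdot,\cdot)$ and the numerator $q_{(1)}^\theta$ (or $q_{(0)}^\theta$) appear in \eqref{c31}--\eqref{c33}. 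The restriction sets $\widehat A_{k,n},\widetilde A_{k,n}$ simply cut the amplitude integral down to $I=\{\rho_1\Delta_n^\upsilon\le|z|\le\rho_2\Delta_n^{-\gamma}\}$, since on $\widehat J_{1,k}$ the total jump size equals the single amplitude; and $\int_{t_k}^{t_{k+1}}\int c(X_{t_k},z)N(ds,dz)=c(X_{t_k},a)$ on $\widehat J_{1,k}\cap\{\widehat\Lambda_{[t_k,t_{k+1}]}=\{a\}\}$.

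Putting these pieces together, for \eqref{c31} one writes the outer expectation over $\widehat J_{0,k}$ as $\int_{\R^d}(\cdots)^2 q_{(0)}^\theta(\Delta_n,X_{t_k},y)e^{-\lambda\Delta_n}dy$, where $(\cdots)$ is the inner conditional expectation evaluated at $X_{t_{k+1}}=y$; the inner term, a functional on $\widetilde J_{1,k}\cap\widetilde A_{k,n}$, becomes $e_k(\theta)\int_I q_{(1)}^\theta(\Delta_n,X_{t_k},y;a)c(X_{t_k},a)\mu(da)e^{-\lambda\Delta_n}\lambda\Delta_n/p^\theta(\Delta_n,X_{t_k},y)$. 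Formulas \eqref{c32} and \eqref{c33} follow by the same disintegration, now with the outer expectation carried on $\widehat J_{1,k}$ so that an extra integral $\int_I\cdots\mu(da)$ and weight $e^{-\lambda\Delta_n}\lambda\Delta_n$ appear outside, and with the inner conditional expectation of ${\bf 1}_{\widetilde J_{0,k}}$ or ${\bf 1}_{\widetilde J_{1,k}}$-functionals producing $q_{(0)}^\theta/p^\theta$ or the amplitude-integrated $q_{(1)}^\theta/p^\theta$ respectively; this is exactly the computation carried out in \cite[Lemma 2.2]{KNT14}, and one invokes that reference for the detailed combinatorics.

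The main obstacle is the rigorous justification of the Bayes-type identity for the inner conditional expectation: one must check that conditioning an $\widetilde{\mathcal F}$-measurable functional on the value of $Y_{t_{k+1}}^\theta$ and on the jump count is legitimate, i.e. that the relevant joint laws admit the densities $q_{(i)}^\theta$ and that $p^\theta>0$ (which holds by the positivity of the transition density noted before Proposition \ref{c3Gobet}), and then that Fubini applies so the order of the $dy$ and $\mu(da)$ integrals can be interchanged freely. The finiteness needed for Fubini is guaranteed by the moment estimates of Lemma \ref{c3moment3}, the polynomial growth of $c$ from {\bf(A1)}, and the upper bounds \eqref{q0}--\eqref{q1} for $q_{(0)}^\theta$ and $q_{(1)}^\theta$; once these are in place the identities are a direct bookkeeping exercise, which is why we state the lemma by analogy with \cite{KNT14} and relegate the routine verification.
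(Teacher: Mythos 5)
Your proposal follows the same route as the paper: apply Bayes' formula to the inner $\widetilde\E_{X_{t_k}}^\theta[\,\cdot\mid Y_{t_{k+1}}^\theta=X_{t_{k+1}}]$, recognizing $\widetilde J_{1,k}\cap\widetilde A_{k,n}$ as weighting by $q_{(1)}^\theta(\Delta_n,X_{t_k},X_{t_{k+1}};a)\,e^{-\lambda\Delta_n}\lambda\Delta_n$ over $I$ divided by $p^\theta(\Delta_n,X_{t_k},X_{t_{k+1}})$, and then disintegrate the outer $\E_{\widehat Q_k^{\theta,\theta_0}}[\,\cdot\,{\bf 1}_{\widehat J_{i,k}}\mid X_{t_k}]$ over the jump count and amplitude to produce the $q_{(i)}^\theta$ weights and the $\mu(da)$ integral, exactly as done in the paper. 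One small remark: the reason you may replace $\E_{\widehat Q_k^{\theta,\theta_0}}[\,\cdot\mid X_{t_k}]$ by integration against the transition densities $q_{(i)}^\theta$ is not that the Girsanov weight "depends only on the Brownian part," but rather that Girsanov's theorem (Lemma \ref{c3Girsanov}) makes $X$ behave on $[t_k,t_{k+1}]$ as an $X^\theta$-process under $\widehat Q_k^{\theta,\theta_0}$, so its conditional law given $X_{t_k}$ is governed by $p^\theta$ and, after conditioning on the jump structure, by the $q_{(i)}^\theta$; otherwise the argument matches the paper's.
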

\begin{proof}
Using Bayes' formula, we get that
\begin{equation*} \begin{split}
&\widetilde{\E}_{X_{t_k}}^{\theta}\left[{\bf 1}_{\widetilde{J}_{1,k}}{\bf{1}}_{\widetilde{A}_{k,n}}\int_{t_k}^{t_{k+1}}
\int_{\R_0^d}c(Y_{t_k}^{\theta},z)M(ds,dz)\bigg\vert Y_{t_{k+1}}^{\theta}=X_{t_{k+1}}\right]\\
&=\dfrac{\widetilde{\E}_{X_{t_k}}^{\theta}\left[c(Y_{t_k}^{\theta},\widetilde{\Lambda}_{[t_k,t_{k+1}]}){\bf 1}_{\{\vert\widetilde{\Lambda}_{[t_k,t_{k+1}]}\vert\in I\}}{\bf 1}_{\{Y_{t_{k+1}}^{\theta}=X_{t_{k+1}}\}}\Big\vert \widetilde{J}_{1,k}\right]\widetilde{\P}_{X_{t_k}}^{\theta}\left(\widetilde{J}_{1,k}\right)}{p^{\theta}(\Delta_n,X_{t_k},X_{t_{k+1}})}\\
&=\dfrac{\int_{I}q_{(1)}^{\theta}(\Delta_n,X_{t_k},X_{t_{k+1}};a)c\left(X_{t_k},a\right)\mu(da)e^{-\lambda \Delta_n}\lambda \Delta_n}{p^{\theta}(\Delta_n,X_{t_k},X_{t_{k+1}})}.
\end{split}
\end{equation*}
This, together with Bayes' formula again, implies that
\begin{align*} 
&\E_{\widehat{Q}_k^{\theta,\theta_0}}\bigg[{\bf 1}_{\widehat{J}_{0,k}}\bigg(e_k(\theta)\widetilde{\E}_{X_{t_k}}^{\theta}\left[{\bf 1}_{\widetilde{J}_{1,k}}{\bf{1}}_{\widetilde{A}_{k,n}}\int_{t_k}^{t_{k+1}}
\int_{\R_0^d}c(Y_{t_k}^{\theta},z)M(ds,dz)\bigg\vert Y_{t_{k+1}}^{\theta}=X_{t_{k+1}}\right]\bigg)^2\Big\vert X_{t_{k}}\bigg]\\
&=\widehat{Q}_k^{\theta,\theta_0}\left(\widehat{J}_{0,k}\big\vert X_{t_{k}}\right)\\
&\qquad\times\E_{\widehat{Q}_k^{\theta,\theta_0}}\left[\left(\dfrac{e_k(\theta)\int_{I}q_{(1)}^{\theta}(\Delta_n,X_{t_k},X_{t_{k+1}};a)c\left(X_{t_k},a\right)\mu(da)e^{-\lambda \Delta_n}\lambda \Delta_n}{p^{\theta}(\Delta_n,X_{t_k},X_{t_{k+1}})}\right)^2\Big\vert \widehat{J}_{0,k}, X_{t_{k}}\right],
\end{align*}
which implies (\ref{c31}). Similarly,
\begin{align*}
&\E_{\widehat{Q}_k^{\theta,\theta_0}}\left[{\bf 1}_{\widehat{J}_{1,k}}\left(e_k(\theta){\bf{1}}_{\widehat{A}_{k,n}}\int_{t_k}^{t_{k+1}}\int_{\R_0^d}c(X_{t_k},z)N(ds,dz)\widetilde{\E}_{X_{t_k}}^{\theta}\left[{\bf 1}_{\widetilde{J}_{0,k}}\bigg\vert Y^{\theta}_{t_{k+1}}=X_{t_{k+1}}\right]\right)^2\Big\vert X_{t_{k}}\right]\\
&=\E_{\widehat{Q}_k^{\theta,\theta_0}}\left[{\bf 1}_{\widehat{J}_{1,k}}\left(e_k(\theta){\bf 1}_{\{\vert\widehat{\Lambda}_{[t_k,t_{k+1}]}\vert\in I\}}c\left(X_{t_k},\widehat{\Lambda}_{[t_k,t_{k+1}]}\right)\right)^2
\left(\dfrac{q_{(0)}^{\theta}(\Delta_n,X_{t_k},X_{t_{k+1}})e^{-\lambda\Delta_n}}{p^{\theta}(\Delta_n,X_{t_k},X_{t_{k+1}})}\right)^2\Big\vert X_{t_{k}}\right]\\
&=\int_{I}\E_{\widehat{Q}_k^{\theta,\theta_0}}\left[\left(\dfrac{q_{(0)}^{\theta}(\Delta_n,X_{t_k},X_{t_{k+1}})e^{-\lambda\Delta_n}}{p^{\theta}(\Delta_n,X_{t_k},X_{t_{k+1}})}\right)^2\Big\vert \widehat{J}_{1,k}, \widehat{\Lambda}_{[t_k,t_{k+1}]}=\{a\}, X_{t_{k}}\right]\left(e_k(\theta)c(X_{t_k},a)\right)^2\\
&\qquad\times\widehat{Q}_k^{\theta,\theta_0}\left(\widehat{\Lambda}_{[t_k,t_{k+1}]}\in da,\widehat{J}_{1,k}\Big\vert X_{t_{k}}\right) \\
&=\int_{I}\int_{\R^d}\left(\dfrac{q_{(0)}^{\theta}(\Delta_n,X_{t_k},y)e^{-\lambda\Delta_n}}{p^{\theta}(\Delta_n,X_{t_k},y)}\right)^2q_{(1)}^{\theta}(\Delta_n,X_{t_k},y;a)e^{-\lambda \Delta_n}\lambda \Delta_n\left(e_k(\theta)c(X_{t_k},a)\right)^2dy \mu(da),
\end{align*}
which shows (\ref{c32}). The proof of (\ref{c33}) follows along the same lines and is therefore omitted.
\end{proof}

\subsection{Large deviation type estimates}
\label{large}

By abuse of notation, we consider the events
$\widehat{J}_{2,k}=\{ N_{t_{k+1}}-N_{t_{k}}\geq 2\}$ and $\widetilde{J}_{2,k}=
\{M_{t_{k+1}}-M_{t_{k}}\geq 2\}$. For $i\in\{0,1,2\}$, set
\begin{align*}
M_i^{\theta}&=\E_{\widehat{Q}_k^{\theta,\theta_0}}\bigg[{\bf 1}_{\widehat{J}_{i,k}}\bigg(e_k(\theta)\bigg(\int_{t_k}^{t_{k+1}}\int_{\R_0^d}c(X_{t_k},z)N(ds,dz)\\
&\qquad-\widetilde{\E}_{X_{t_k}}^{\theta}\left[\int_{t_k}^{t_{k+1}}\int_{\R_0^d}c(Y_{t_k}^{\theta},z)M(ds,dz)\bigg\vert Y^{\theta}_{t_{k+1}}=X_{t_{k+1}}\right]\bigg)\bigg)^2\Big\vert X_{t_k}\bigg].
\end{align*}

In all what follows, to deal with the estimation of $\vert e_k(\theta)\vert$, hypotheses {\bf(A2)} and {\bf(A4)}\textnormal{(b)} will be used repeatedly without being quoted. 

Recall that for the simple L\'evy process in \cite{KNT14}, we used a large deviation principle by conditioning on the number of jumps inside and outside the conditional expectation in order to obtain the large deviation type estimates (see \cite[Lemma 2.4]{KNT14}). For the non-linear model \eqref{c3eq1}, we need to obtain an analogue of \cite[Lemma 2.4]{KNT14}. For this, we use the fact that the study of the asymptotic behavior of the transition density leads us to study the behavior of the transition density under the additional condition on the number of jumps which has to be compared with another transition density with a different number of jumps. This is why one needs to use lower bounds for the transition density and upper bounds for the transition density conditioned on the jump structure in order to show the following large deviation type estimates.

\begin{lemma}\label{c3ordre} Under conditions {\bf(A1)}-{\bf(A3)}, {\bf(A4)}\textnormal{(a)}, \textnormal{(b)}, {\bf(A6)} and {\bf(A8)}, for any $\theta\in\Theta$ and $n$ large enough, there exist constants $C, C_0, C_1>0$ and $q_1>1$ such that for all $\alpha\in(\upsilon+3m\gamma+3\gamma,\frac{1}{2})$, $\alpha_0\in(\frac 14,\frac 12-3\gamma)$, $\varepsilon\in(0,\alpha_0-3m\gamma)$, $q>1$, and $k \in\{0,...,n-1\}$,
\begin{align} \label{c3m1}
M_0^{\theta} & \leq C\left(1+\vert X_{t_k}\vert^{q_1}\right)\left(\Delta_n^{-2m\gamma}e^{-C_0\frac{\Delta_n^{2\alpha-1}}{(1+\vert X_{t_k}\vert^{2})^3}}+\lambda^{\frac{2}{q}}\Delta_n^{1+\frac{1}{q}}+\Delta_n\left(\int_{\{\vert z\vert \leq\rho_1\Delta_n^{\upsilon}\}}\nu(dz)\right)^{\frac{1}{q}}\right),\\ \label{c3m2}
M_1^{\theta} & \leq C\left(1+\vert X_{t_k}\vert^{q_1}\right)\bigg(\Delta_n^{-2m\gamma}e^{-C_0\frac{\Delta_n^{2\alpha-1}}{(1+\vert X_{t_k}\vert^2)^3}}+\lambda^{\frac{2}{q}}\Delta_n^{1+\frac{1}{q}}+\Delta_n^{1+2\varepsilon}\notag\\
&\qquad\qquad+\Delta_n\left(\int_{\{\vert z\vert \leq\rho_1\Delta_n^{\upsilon}\}}\nu(dz)\right)^{\frac{1}{q}}+\Delta_n^{-2m\gamma-\frac{d}{2}}e^{-C_1\frac{\Delta_n^{2\alpha_0+6\gamma-1}}{(1+\vert X_{t_k}\vert^2)^3}}\bigg),\\ \label{c3m3}
M_2^{\theta} & \leq C\lambda^{\frac{2}{q}}\Delta_n^{1+\frac{1}{q}}(1+\vert X_{t_k}\vert^{q_1}).
\end{align}
In particular, \textnormal{(\ref{c3m3})} holds for all $n \geq 1$.
\end{lemma}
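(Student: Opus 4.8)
The plan is to treat the three terms $M_0^\theta$, $M_1^\theta$, $M_2^\theta$ by first expanding the square, so that each $M_i^\theta$ splits into the ``Poisson-measure part'' $\E_{\widehat{Q}_k^{\theta,\theta_0}}[{\bf 1}_{\widehat J_{i,k}}(e_k(\theta)\int\!\!\int c(X_{t_k},z)N(ds,dz))^2\,|\,X_{t_k}]$, the ``conditional-expectation part'' involving $\widetilde{\E}_{X_{t_k}}^\theta[\cdot\,|\,Y_{t_{k+1}}^\theta=X_{t_{k+1}}]$, and the cross term; for $M_2^\theta$ one never needs anything more than crude moment bounds, since on $\widehat J_{2,k}$ one has $\widehat{Q}_k^{\theta,\theta_0}(\widehat J_{2,k}\,|\,X_{t_k})\le C(\lambda\Delta_n)^2$ and, after a H\"older split with exponents $q$ and $q/(q-1)$, the $\E[|c(X_{t_k},z)|^{2q'}]$ factors are controlled by {\bf(A1)} and {\bf(A6)}, giving $(\lambda\Delta_n)^{2/q}\Delta_n^{\text{something}}$; this already yields \eqref{c3m3} for all $n\ge1$, so I would dispose of $M_2^\theta$ first.

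For $M_0^\theta$ and $M_1^\theta$ the heart of the matter is the conditional-expectation part, and here I would use Lemma \ref{c3lemma7} to rewrite the conditional expectations as ratios of transition densities integrated against $q_{(0)}^\theta$ or $q_{(1)}^\theta$. The next step is to insert the indicator decomposition ${\bf 1}_{\widetilde A_{k,n}}+{\bf 1}_{\widetilde A_{k,n}^c}$ (and the analogous $\widehat A_{k,n}$) so as to separate: (a) the ``good'' jump-amplitude region $I=\{\rho_1\Delta_n^\upsilon\le|z|\le\rho_2\Delta_n^{-\gamma}\}$, where the lower bound for $p^\theta(\Delta_n,X_{t_k},y)$ combined with the upper bounds \eqref{q0}, \eqref{q1} of Lemma \ref{c3lemma8} for $q_{(0)}^\theta$ and $q_{(1)}^\theta$ makes the density ratio integrable against a Gaussian, so that a Laplace/large-deviation estimate of the form $\int e^{-c|f(y)-f(x)|^2/\Delta_n}\cdot(\text{stuff})\,dy\lesssim \Delta_n^{-2m\gamma}e^{-C_0\Delta_n^{2\alpha-1}/(1+|X_{t_k}|^2)^3}$ comes out after comparing $|f(y)-f(x)-\widetilde c(f(x),z)|^2$ with $|f(y)-f(x)|^2$ on the region where the jump amplitude is at least $\rho_1\Delta_n^\upsilon$; (b) the region of very small jumps $\{|z|\le\rho_1\Delta_n^\upsilon\}$, whose $\widehat Q_k^{\theta,\theta_0}$-probability is $\sim\Delta_n\int_{\{|z|\le\rho_1\Delta_n^\upsilon\}}\nu(dz)$, producing the terms with $(\int_{\{|z|\le\rho_1\Delta_n^\upsilon\}}\nu(dz))^{1/q}$; (c) the region of large jumps $\{|z|>\rho_2\Delta_n^{-\gamma}\}$, controlled by $\nu$-moments from {\bf(A6)} giving a $\lambda^{2/q}\Delta_n^{1+1/q}$ contribution; and for $M_1^\theta$ the additional term $\Delta_n^{1+2\varepsilon}$ comes from the Lipschitz modulus $|c(X_{t_k},z)-c(X_{t_k},a)|$ within the good region (bounded via {\bf(A1)}/{\bf(A3)} and the spread $\rho_2\Delta_n^{-\gamma}$ of $I$), while the extra exponential $\Delta_n^{-2m\gamma-d/2}e^{-C_1\Delta_n^{2\alpha_0+6\gamma-1}/(1+|X_{t_k}|^2)^3}$ arises from the regime where the number of observed jumps in the conditional process disagrees with that of $X^{\theta_0}$, forcing one to bound $q_{(1)}^\theta$ by $q_{(0)}^\theta$ times a Gaussian and paying a factor $\Delta_n^{-d/2}$.

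Throughout, I would keep $e_k(\theta)$ under control by {\bf(A2)} and {\bf(A4)}\textnormal{(b)} (as the statement already announces), absorbing its polynomial growth into the $(1+|X_{t_k}|^{q_1})$ prefactor, and I would use {\bf(A3)} (namely $\inf_x|c(x,z)|\ge C|z|$) to guarantee that on $I$ the jump amplitude of $V^\theta$, which is $\widetilde c(f(x),z)$, is bounded below by a power of $\Delta_n^\upsilon$ so that the Gaussian exponent genuinely produces a stretched-exponentially small quantity. The main obstacle I anticipate is the bookkeeping in step (a): one must verify that after dividing by the lower bound for $p^\theta$, the resulting Gaussian integral in $y$ against the product of two heat-kernel-type factors at different ``centers'' $f(x)$ and $f(x)+\widetilde c(f(x),z)$ really does collapse to the claimed exponent $\Delta_n^{2\alpha-1}$ (respectively $\Delta_n^{2\alpha_0+6\gamma-1}$), uniformly over $z\in I$ and over $X_{t_k}$ up to the stated polynomial loss, and that all the powers of $\beta(z),\eta(z)$ appearing in \eqref{q1} get absorbed correctly using $|z|\le\rho_2\Delta_n^{-\gamma}$; the constraints $\alpha>\upsilon+3m\gamma+3\gamma$ and $\varepsilon<\alpha_0-3m\gamma$ are exactly what is needed to make these powers line up, so the bulk of the work is checking that arithmetic. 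Once the good region is handled, the small-jump and large-jump regions are routine applications of {\bf(A6)}, {\bf(A7)} and H\"older's inequality, and assembling the pieces yields \eqref{c3m1}--\eqref{c3m3}.
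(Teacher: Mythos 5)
Your high-level plan is aligned with the paper's: dispose of $M_2^\theta$ by H\"older using $\widehat{Q}_k^{\theta,\theta_0}(\widehat J_{2,k}\mid X_{t_k})\le(\lambda\Delta_n)^2$; decompose $M_0^\theta$, $M_1^\theta$ by the indicators ${\bf 1}_{\widetilde A_{k,n}}$, ${\bf 1}_{\widetilde A_{k,n}^c}$ and ${\bf 1}_{\widetilde J_{i,k}}$ (and analogously $\widehat A_{k,n}$, $\widehat J_{i,k}$); represent the surviving pieces via Lemma~\ref{c3lemma7}; split the $dy$-integral into a large-deviation region and its complement; lower-bound $p^\theta$ from \eqref{c3density} and upper-bound $q_{(0)}^\theta$, $q_{(1)}^\theta$ by \eqref{q0}, \eqref{q1}; and use {\bf(A3)} to lower-bound $|\widetilde c(f(X_{t_k}),a)|$ on $I$. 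This correctly produces \eqref{c3m1} and \eqref{c3m3}.

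However, your explanation of the two terms that appear in \eqref{c3m2} but not in \eqref{c3m1} is wrong, and it marks a genuine gap in the plan. You claim $\Delta_n^{1+2\varepsilon}$ is obtained by bounding $|c(X_{t_k},z)-c(X_{t_k},a)|$ uniformly on the good region $I$, and that the extra exponential $\Delta_n^{-2m\gamma-\frac d2}e^{-C_1\Delta_n^{2\alpha_0+6\gamma-1}/(1+|X_{t_k}|^2)^3}$ comes from the regime where the number of jumps in $N$ and $M$ disagree. Neither holds. On $I$ the sizes $z,a$ range up to $\rho_2\Delta_n^{-\gamma}$, so by {\bf(A1)} the modulus $|c(X_{t_k},z)-c(X_{t_k},a)|$ can be of order $\Delta_n^{-m\gamma}(1+|X_{t_k}|)$, which \emph{grows}; a uniform $\Delta_n^{\varepsilon}$ bound is false. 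And the jump-count mismatch (e.g.\ $\widehat J_{1,k}$ vs $\widetilde J_{0,k}$) produces the \emph{first} exponential $\Delta_n^{-2m\gamma}e^{-C_0\Delta_n^{2\alpha-1}/\dots}$, the same one already appearing in \eqref{c3m1}, not the second. The missing idea is this: within the subcase where both $N$ and $M$ have exactly one jump (sizes $z$ and $a$ respectively), one must introduce the threshold set
\begin{equation*}
E_z^k=\bigl\{a\in I:\ |c(X_{t_k},z)-c(X_{t_k},a)|\le\Delta_n^{\varepsilon}\bigr\}
\end{equation*}
and split the integral over $a$ into $E_z^k$ and $(E_z^k)^c$. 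The $E_z^k$ piece yields $\Delta_n^{1+2\varepsilon}$ by design. On $(E_z^k)^c$ the two jump amplitudes of $V^\theta$ differ by at least $C\Delta_n^{\varepsilon+3m\gamma}/(1+|X_{t_k}|^2)^{3/2}$, so on the non-large-deviation set where $|f(y)-f(X_{t_k})-\widetilde c(f(X_{t_k}),z)|$ is small the Gaussian kernel from \eqref{q1} centered at $\widetilde c(f(X_{t_k}),a)$ is stretched-exponentially small in $\Delta_n$; the prefactor $\Delta_n^{-d/2}$ survives because one Gaussian factor has been spent on the separation and there is no remaining $dy$-Gaussian to absorb it. Without this threshold decomposition the jump-size mismatch part of $M_1^\theta$ cannot be closed, and executing the plan as stated would stall exactly there.
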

\begin{proof}
We start showing (\ref{c3m1}). Multiplying the random variable inside the conditional expectation of $M_0^{\theta}$ by ${\bf{1}}_{\widetilde{A}_{k,n}}+{\bf{1}}_{\widetilde{A}_{k,n}^c}$ and ${\bf 1}_{\widetilde{J}_{0,k}} +{\bf 1}_{\widetilde{J}_{1,k}}+ {\bf 1}_{\widetilde{J}_{2,k}}$, we get that $M_0^{\theta}\leq 3(M_{0,1}^{\theta}+M_{0,2}^{\theta}+M_{0,3}^{\theta})$, where for $i\in\{1,2\}$, setting $\Delta M_k:=M_{t_{k+1}}-M_{t_{k}}$, 
\begin{align*}
M_{0,i}^{\theta}&=\E_{\widehat{Q}_k^{\theta,\theta_0}}\left[{\bf 1}_{\widehat{J}_{0,k}}\left(e_k(\theta)\widetilde{\E}_{X_{t_k}}^{\theta}\left[{\bf 1}_{\widetilde{J}_{i,k}}{\bf{1}}_{\widetilde{A}_{k,n}}\int_{t_k}^{t_{k+1}}
\int_{\R_0^d}c(Y_{t_k}^{\theta},z)M(ds,dz)\bigg\vert Y_{t_{k+1}}^{\theta}=X_{t_{k+1}}\right]\right)^2\Big\vert X_{t_k}\right],\\
M_{0,3}^{\theta}&=\E_{\widehat{Q}_k^{\theta,\theta_0}}\left[{\bf 1}_{\widehat{J}_{0,k}}\left(e_k(\theta)\widetilde{\E}_{X_{t_k}}^{\theta}\left[{\bf 1}_{\{\Delta M_k>0\}}{\bf{1}}_{\widetilde{A}_{k,n}^c}\int_{t_k}^{t_{k+1}}
\int_{\R_0^d}c(Y_{t_k}^{\theta},z)M(ds,dz)\bigg\vert Y_{t_{k+1}}^{\theta}=X_{t_{k+1}}\right]\right)^2\Big\vert X_{t_k}\right].
\end{align*}

By \eqref{c31}, we have that
\begin{equation*} \begin{split}
M_{0,1}^{\theta}=\int_{\R^d}\left(\dfrac{e_k(\theta)\int_{I}q_{(1)}^{\theta}(\Delta_n,X_{t_k},y;a)c\left(X_{t_k},a\right)\mu(da)e^{-\lambda \Delta_n}\lambda \Delta_n}{p^{\theta}(\Delta_n,X_{t_k},y)}\right)^2q_{(0)}^{\theta}(\Delta_n,X_{t_k},y)e^{-\lambda \Delta_n}dy.
\end{split}
\end{equation*}
We next divide the $dy$ integral in $M_{0,1}^{\theta}$ into the subdomains $J_1:=\{y\in\R^d:\vert f(y)-f(X_{t_k})\vert>\frac{\Delta_n^{\alpha}}{(1+\vert X_{t_k}\vert^2)^{\frac{3}{2}}}\}$ and $J_2:=\{y\in\R^d:\vert f(y)-f(X_{t_k})\vert\leq\frac{\Delta_n^{\alpha}}{(1+\vert X_{t_k}\vert^2)^{\frac{3}{2}}}\}$, where $\alpha\in(\upsilon+3m\gamma+3\gamma,\frac{1}{2})$, and call each integral $M_{0,1,1}^{\theta}$ and $M_{0,1,2}^{\theta}$. Therefore, the estimation of $M_{0,1}^{\theta}$ is divided into two parts. The first one uses a large deviation type principle for the continuous process. The other uses the fact that the jump term is significantly bigger than the continuous term. This fact is obtained under condition {\bf(A3)}. We start bounding $M_{0,1,1}^{\theta}$. By \eqref{c3density},
\begin{equation}\label{c3eq25}
p^{\theta}(\Delta_n,X_{t_k},y)\geq \int_{I}q_{(1)}^{\theta}(\Delta_n,X_{t_k},y;a)\mu(da)e^{-\lambda \Delta_n}\lambda \Delta_n.
\end{equation}
Then, using {\bf(A1)}, on $I$, $\vert c(X_{t_k},a)\vert\leq C\Delta_n^{-m\gamma}(1+\vert X_{t_k}\vert)$ for some constant $C>0$, and using \eqref{q0}, together with the equality $e^{-\vert x\vert^2}=e^{-\frac{\vert x\vert^2}{2}}e^{-\frac{\vert x\vert^2}{2}}$, valid for all $x\in\R^d$, we get that 
\begin{equation}\label{esm011}\begin{split}
M_{0,1,1}^{\theta}&\leq C\Delta_n^{-2m\gamma}\left(1+
\vert X_{t_k}\vert^{q_1}\right)\int_{J_1}q_{(0)}^{\theta}(\Delta_n,X_{t_k},y)dy\\
&\leq C\Delta_n^{-2m\gamma}
\left(1+\vert X_{t_k}\vert^{q_1}\right)\int_{J_1}\frac{1}{\Delta_n^{d/2}}e^{-\frac{\left\vert f(y)-f(X_{t_k})\right\vert^2}{c\Delta_n}}\det\nabla f(y)dy\\
&\leq C\Delta_n^{-2m\gamma}\left(1+\vert X_{t_k}\vert^{q_1}\right)e^{-\frac{\Delta_n^{2\alpha-1}}{2c(1+\vert X_{t_k}\vert^2)^3}}\int_{J_1}\frac{1}{\Delta_n^{d/2}}e^{-\frac{\left\vert f(y)-f(X_{t_k})\right\vert^2}{2c\Delta_n}}\det\nabla f(y)dy\\
&\leq C\Delta_n^{-2m\gamma}
\left(1+\vert X_{t_k}\vert^{q_1}\right)e^{-\frac{\Delta_n^{2\alpha-1}}{2c(1+\vert X_{t_k}\vert^2)^3}},
\end{split}
\end{equation}
for some constants $C>0$, $c>1$, $q_1>1$, since the $dy$ integral is Gaussian and thus finite. We next treat $M_{0,1,2}^{\theta}$. Observe that \eqref{c3density} yields 
\begin{equation}\label{c3eq28}\begin{split}
\left(p^{\theta}(\Delta_n,X_{t_k},y)\right)^2\geq q_{(0)}^{\theta}(\Delta_n,X_{t_k},y)e^{-\lambda \Delta_n}\int_{I}q_{(1)}^{\theta}(\Delta_n,X_{t_k},y;a)\mu(da)e^{-\lambda \Delta_n}\lambda \Delta_n.
\end{split}
\end{equation}
Then, using hypothesis {\bf(A1)}, Fubini's theorem and \eqref{q1}, we get that 
\begin{align*}
M_{0,1,2}^{\theta}&\leq C\Delta_n^{-2m\gamma}
\left(1+\vert X_{t_k}\vert^{q_1}\right)e^{-\lambda \Delta_n}\lambda\Delta_n\int_{J_2}\int_{I}q_{(1)}^{\theta}(\Delta_n,X_{t_k},y;a)\mu(da)dy\\
&\leq C\Delta_n^{-2m\gamma}
\left(1+\vert X_{t_k}\vert^{q_1}\right)\int_{I}\int_{J_2}\dfrac{\beta^d(a)}{\eta(a)\Delta_n^{d/2}}e^{-\frac{\left\vert f(y)-f(X_{t_k})-\widetilde{c}(f(X_{t_k}),a)\right\vert^2}{C_2\beta^2(a)\Delta_n}}\det\nabla f(y)dy\nu(da),
\end{align*}
for some constants $C, C_2>0$ and $q_1>1$, since $e^{-\lambda \Delta_n}\Delta_n\leq 1$. 

Then, using the mean value theorem for vector-valued functions, we get that
\begin{align}
\left\vert\widetilde{c}(f(X_{t_k}),a)\right\vert^2&=\left\vert f(X_{t_k}+c(X_{t_k},a))-f(X_{t_k})\right\vert^2 \notag\\
&=\left\vert\left(\int_0^{1}\nabla f(X_{t_k}+\eta c(X_{t_k},a))d\eta\right) c(X_{t_k},a)\right\vert^2 \notag\\
&=\left\vert\left(\int_0^{1}\dfrac{1}{\left(1+\vert X_{t_k}+\eta c(X_{t_k},a)\vert^2\right)^{\frac{3}{2}}}\left(\textup{I}_d+A_\eta\right)d\eta\right)c(X_{t_k},a)\right\vert^2 \notag\\
&\geq\dfrac{C\Delta_n^{6m\gamma}}{(1+\vert X_{t_k}\vert^2)^{3}}\left\vert c(X_{t_k},a)+\left(\int_0^{1}A_\eta d\eta\right)c(X_{t_k},a)\right\vert^2 \label{minoration}\\
&=\dfrac{C\Delta_n^{6m\gamma}}{(1+\vert X_{t_k}\vert^2)^{3}}\bigg\{\vert c(X_{t_k},a)\vert^2+2\int_0^{1}(c(X_{t_k},a))^{\ast}A_\eta c(X_{t_k},a)d\eta \notag\\
&\qquad\qquad+\left\vert \left(\int_0^{1}A_\eta d\eta\right)c(X_{t_k},a)\right\vert^2\bigg\} \notag\\
&\geq \dfrac{C\Delta_n^{6m\gamma}}{(1+\vert X_{t_k}\vert^2)^{3}}\vert c(X_{t_k},a)\vert^2 \notag\\
&\geq \dfrac{C_3\Delta_n^{2(\upsilon+3m\gamma)}}{(1+\vert X_{t_k}\vert^2)^{3}},\notag
\end{align}
for some constant $C_3>0$, since the matrix $A_\eta\equiv A(X_{t_k}+\eta c(X_{t_k},a))$ is non-negative definite. Here, we have used the fact that on $I$, from {\bf(A1)}, $\vert c(X_{t_k},a)\vert\leq C\Delta_n^{-m\gamma}(1+\vert X_{t_k}\vert)$, and from {\bf(A3)}, $\vert c(X_{t_k},a)\vert\geq C\vert a \vert \geq C\rho_1\Delta_n^{\upsilon}$ for some constant $C>0$.

On the other hand, on $J_2$ we have that $\vert f(y)-f(X_{t_k})\vert\leq\frac{\Delta_n^{\alpha}}{(1+\vert X_{t_k}\vert^2)^{3/2}}$. Thus, using the inequality $\vert u+v\vert^2\geq \frac{\vert u\vert^2}{2}-\vert v\vert^2$, valid for all $u, v\in\R^d$, together with the fact that $\alpha>\upsilon+3m\gamma$, we deduce that for $n$ large enough,
\begin{align*}
\left\vert f(y)-f(X_{t_k})-\widetilde{c}(f(X_{t_k}),a)\right\vert^2&\geq\dfrac{\left\vert\widetilde{c}(f(X_{t_k}),a)\right\vert^2}{2}-\vert f(y)-f(X_{t_k})\vert^2\\
&\geq\dfrac{C_3\Delta_n^{2(\upsilon+3m\gamma)}}{2(1+\vert X_{t_k}\vert^2)^{3}}-\dfrac{\Delta_n^{2\alpha}}{(1+\vert X_{t_k}\vert^2)^{3}}\\
&\geq \dfrac{C_4\Delta_n^{2(\upsilon+3m\gamma)}}{(1+\vert X_{t_k}\vert^2)^{3}},
\end{align*}
for some constant $C_4>0$. Therefore, using {\bf(A6)}, we obtain that for $n$ large enough,
\begin{align*}
M_{0,1,2}^{\theta}&\leq C\Delta_n^{-2m\gamma}
\left(1+\vert X_{t_k}\vert^{q_1}\right)e^{-C_5\frac{\Delta_n^{2(\upsilon+3m\gamma+3\gamma)-1}}{(1+\vert X_{t_k}\vert^2)^3}}\\
&\qquad\times\int_{I}\int_{J_2}\dfrac{\beta^{2d}(a)}{\eta(a)(2C_2\beta^2(a)\Delta_n)^{d/2}}e^{-\frac{\left\vert f(y)-f(X_{t_k})-\widetilde{c}(f(X_{t_k}),a)\right\vert^2}{2C_2\beta^2(a)\Delta_n}}\det\nabla f(y)dy\nu(da)\\
&\leq C\Delta_n^{-2m\gamma}
\left(1+\vert X_{t_k}\vert^{q_1}\right)e^{-C_5\frac{\Delta_n^{2(\upsilon+3m\gamma+3\gamma)-1}}{(1+\vert X_{t_k}\vert^2)^3}}\int_{I}\dfrac{\beta^{2d}(a)}{\eta(a)}\nu(da)\\
&\leq C\Delta_n^{-2m\gamma}
\left(1+\vert X_{t_k}\vert^{q_1}\right)e^{-C_5\frac{\Delta_n^{2(\upsilon+3m\gamma+3\gamma)-1}}{(1+\vert X_{t_k}\vert^2)^3}}\left(\int_{\{\vert a\vert \leq 1\}}\nu(da)+\int_{\{\vert a\vert > 1\}}\vert a\vert^{6d+3}\nu(da)\right)\\
&\leq C\Delta_n^{-2m\gamma}
\left(1+\vert X_{t_k}\vert^{q_1}\right)e^{-C_5\frac{\Delta_n^{2(\upsilon+3m\gamma+3\gamma)-1}}{(1+\vert X_{t_k}\vert^2)^3}},
\end{align*}
for some constant $C_5>0$, since the $dy$ integral is Gaussian and thus finite. This shows that for $n$ large enough and $\alpha\in(\upsilon+3m\gamma+3\gamma,\frac{1}{2})$, for some constants $C, C_0>0$,
\begin{align}\label{m01}
M_{0,1}^{\theta} & \leq C\left(1+\vert X_{t_k}\vert^{q_1}\right)\Delta_n^{-2m\gamma}e^{-C_0\frac{\Delta_n^{2\alpha-1}}{(1+\vert X_{t_k}\vert^2)^3}}.
\end{align}

Next, using Jensen's and H\"older's inequalities with $p,q >1$ and $\frac{1}{p}+\frac{1}{q}=1$, and hypotheses {\bf(A1)} and {\bf(A6)}, it holds that
\begin{align}
M_{0,2}^{\theta}&\leq \E\left[{\bf 1}_{\widetilde{J}_{2,k}}\left(e_k(\theta)\int_{t_k}^{t_{k+1}}\int_{\R_0^d}c(Y_{t_k}^{\theta},z)M(ds,dz)\right)^2 \Big\vert Y_{t_k}^{\theta}=X_{t_k}\right] \notag\\
&\leq \left(\P\left(\widetilde{J}_{2,k}\Big\vert Y_{t_k}^{\theta}=X_{t_k}\right)\right)^{\frac{1}{q}}\left(\E\left[\left(e_k(\theta)\int_{t_k}^{t_{k+1}}\int_{\R_0^d}c(Y_{t_k}^{\theta},z)M(ds,dz)\right)^{2p} \Big\vert Y_{t_k}^{\theta}=X_{t_k}\right]\right)^{\frac{1}{p}} \notag\\
&\leq C\lambda^{\frac{2}{q}}\Delta_n^{1+\frac{1}{q}}\left(1+\vert X_{t_k}\vert^{q_1}\right).\label{m02}
\end{align}

Next, using again Jensen's and H\"older's inequalities, and {\bf(A1)} and {\bf(A6)}, we get 
\begin{align*}
M_{0,3}^{\theta}&\leq\E\left[{\bf 1}_{\{\Delta M_k>0\}}{\bf{1}}_{\widetilde{A}_{k,n}^c}\left(e_k(\theta)\int_{t_k}^{t_{k+1}}
\int_{\R_0^d}c(Y_{t_k}^{\theta},z)M(ds,dz)\right)^2\bigg\vert Y_{t_{k}}^{\theta}=X_{t_{k}}\right]\\
&\leq\left(\E\left[\left(e_k(\theta)\int_{t_k}^{t_{k+1}}\int_{\R_0^d}c(Y_{t_k}^{\theta},z)M(ds,dz)\right)^{2p}\Big\vert Y_{t_k}^{\theta}=X_{t_k}\right]\right)^{\frac{1}{p}}\\
&\qquad\times\left(\P\left(\widetilde{A}_{k,n}^c,\Delta M_k>0\big\vert Y_{t_k}^{\theta}=X_{t_k}\right)\right)^{\frac{1}{q}}\\
&\leq C\Delta_n^{\frac{1}{p}}\left(1+\vert X_{t_k}\vert^{q_1}\right)\left(\P\left(\widetilde{A}_{k,n}^c,\Delta M_k>0\big\vert Y_{t_k}^{\theta}=X_{t_k}\right)\right)^{\frac{1}{q}},
\end{align*}
where $p,q >1$ and $\frac{1}{p}+\frac{1}{q}=1$. On the other hand, using Chebyshev's inequality and {\bf (A6)}, we have that for any $\kappa\geq 1$,
\begin{equation}\label{prob}\begin{split}
&\P\left(\widetilde{A}_{k,n}^c,\Delta M_k>0\big\vert Y_{t_k}^{\theta}=X_{t_k}\right)=\P\left(\left\vert\widetilde{Z}_{t_{k+1}}-\widetilde{Z}_{t_{k}}\right\vert<\rho_1\Delta_n^{\upsilon},\widetilde{J}_{1,k}\right)\\
&\qquad+\P\left(\left\vert\widetilde{Z}_{t_{k+1}}-\widetilde{Z}_{t_{k}}\right\vert<\rho_1\Delta_n^{\upsilon},\widetilde{J}_{2,k}\right)+\P\left(\left\vert\widetilde{Z}_{t_{k+1}}-\widetilde{Z}_{t_{k}}\right\vert>\rho_2\Delta_n^{-\gamma},\Delta M_k>0\right)\\
&\leq \P\left(\left\vert \widetilde{\Lambda}_{[t_k,t_{k+1}]}\right\vert<\rho_1\Delta_n^{\upsilon}\big\vert \widetilde{J}_{1,k}\right)\P(\widetilde{J}_{1,k})+\left(\lambda\Delta_n\right)^2+\left(\rho_2^{-1}\Delta_n^{\gamma}\right)^{\kappa}\E\left[\left\vert\widetilde{Z}_{t_{k+1}}-\widetilde{Z}_{t_{k}}\right\vert^{\kappa}\right]\\
&\leq e^{-\lambda\Delta_n}\lambda\Delta_n\int_{\{\vert z\vert \leq\rho_1\Delta_n^{\upsilon}\}}\mu(dz)+\left(\lambda\Delta_n\right)^2+\left(\rho_2^{-1}\Delta_n^{\gamma}\right)^{\kappa}\int_{t_k}^{t_{k+1}}\int_{\R_0^d}\vert z\vert^{\kappa}\nu(dz)ds\\
&\leq e^{-\lambda\Delta_n}\Delta_n\int_{\{\vert z\vert \leq\rho_1\Delta_n^{\upsilon}\}}\nu(dz)+\left(\lambda\Delta_n\right)^2+C\Delta_n^{\gamma\kappa+1}\\
&\leq e^{-\lambda\Delta_n}\Delta_n\int_{\{\vert z\vert \leq\rho_1\Delta_n^{\upsilon}\}}\nu(dz)+C\left(\lambda\Delta_n\right)^2,
\end{split}
\end{equation}
for some constant $C>0$, where $\kappa$ is chosen in order that $\gamma\kappa+1>2$. Therefore,
\begin{align*}
M_{0,3}^{\theta}&\leq C\Delta_n\left(\left(\int_{\{\vert z\vert \leq\rho_1\Delta_n^{\upsilon}\}}\nu(dz)\right)^{\frac{1}{q}}+\lambda^{\frac{2}{q}}\Delta_n^{\frac{1}{q}}\right)\left(1+\vert X_{t_k}\vert^{q_1}\right),
\end{align*}
which, together with \eqref{m01} and \eqref{m02}, shows (\ref{c3m1}). 

We next show (\ref{c3m2}). As for the term $M_0^{\theta}$, multiplying the random variable inside the conditional expectation of $M_1^{\theta}$ by ${\bf{1}}_{\widetilde{A}_{k,n}}+{\bf{1}}_{\widetilde{A}_{k,n}^c}$ and ${\bf 1}_{\widetilde{J}_{0,k}} +{\bf 1}_{\widetilde{J}_{1,k}}+ {\bf 1}_{\widetilde{J}_{2,k}}$, we have that $M_1^{\theta}\leq 4(M_{1,1}^{\theta}+M_{1,2}^{\theta}+M_{1,3}^{\theta}+M_{1,4}^{\theta})$, where
\begin{align*} 
M_{1,1}^{\theta}&=\E_{\widehat{Q}_k^{\theta,\theta_0}}\bigg[{\bf 1}_{\widehat{J}_{1,k}}\bigg(e_k(\theta)\bigg({\bf{1}}_{\widehat{A}_{k,n}}\int_{t_k}^{t_{k+1}}\int_{\R_0^d}c(X_{t_k},z)N(ds,dz)\\
&\quad\quad\quad-\widetilde{\E}_{X_{t_k}}^{\theta}\left[{\bf 1}_{\widetilde{J}_{1,k}}{\bf{1}}_{\widetilde{A}_{k,n}}\int_{t_k}^{t_{k+1}}\int_{\R_0^d}c(Y_{t_k}^{\theta},z)M(ds,dz)\bigg\vert Y^{\theta}_{t_{k+1}}=X_{t_{k+1}}\right]\bigg)\bigg)^2\Big\vert X_{t_k}\bigg],\\
M_{1,2}^{\theta}&=\E_{\widehat{Q}_k^{\theta,\theta_0}}\left[{\bf 1}_{\widehat{J}_{1,k}}\left(e_k(\theta)\widetilde{\E}_{X_{t_k}}^{\theta}\left[{\bf 1}_{\widetilde{J}_{2,k}}{\bf{1}}_{\widetilde{A}_{k,n}}\int_{t_k}^{t_{k+1}}\int_{\R_0^d}c(Y_{t_k}^{\theta},z)M(ds,dz)\bigg\vert Y^{\theta}_{t_{k+1}}=X_{t_{k+1}}\right]\right)^2\Big\vert X_{t_k}\right],\\
M_{1,3}^{\theta}&=\E_{\widehat{Q}_k^{\theta,\theta_0}}\left[{\bf 1}_{\widehat{J}_{1,k}}{\bf{1}}_{\widehat{A}_{k,n}^c}\left(e_k(\theta)\int_{t_k}^{t_{k+1}}\int_{\R_0^d}c(X_{t_k},z)N(ds,dz)\right)^2\Big\vert X_{t_k}\right],\\
M_{1,4}^{\theta}&=\E_{\widehat{Q}_k^{\theta,\theta_0}}\left[{\bf 1}_{\widehat{J}_{1,k}}\left(e_k(\theta)\widetilde{\E}_{X_{t_k}}^{\theta}\left[{\bf 1}_{\{\Delta M_k>0\}}{\bf{1}}_{\widetilde{A}_{k,n}^c}\int_{t_k}^{t_{k+1}}\int_{\R_0^d}c(Y_{t_k}^{\theta},z)M(ds,dz)\bigg\vert Y^{\theta}_{t_{k+1}}=X_{t_{k+1}}\right]\right)^2\Big\vert X_{t_k}\right].
\end{align*}

To bound $M_{1,3}^{\theta}$, using Jensen's and H\"older's inequalities, and {\bf(A1)}, {\bf(A6)}, we have that
\begin{align*}
M_{1,3}^{\theta}&\leq \left(\E_{\widehat{Q}_k^{\theta,\theta_0}}\left[\left(e_k(\theta)\int_{t_k}^{t_{k+1}}
\int_{\R_0^d}c(X_{t_k},z)N(ds,dz)\right)^{2p}\Big\vert X_{t_k}\right]\right)^{\frac{1}{p}}\left(\P\left(\widehat{A}_{k,n}^c,\widehat{J}_{1,k}\big\vert X_{t_k}\right)\right)^{\frac{1}{q}}d\ell\\
&\leq C\Delta_n^{\frac{1}{p}}\left(1+\vert X_{t_k}\vert^{q_1}\right)\left(\P\left(\widehat{A}_{k,n}^c,\widehat{J}_{1,k}\big\vert X_{t_k}\right)\right)^{\frac{1}{q}},
\end{align*}
where $p,q>1$ and $\frac{1}{p}+\frac{1}{q}=1$. On the other hand, as \eqref{prob}, using Chebyshev's inequality and hypothesis {\bf (A6)}, we have that for any $\kappa\geq 1$,
\begin{equation*}\begin{split}
\P\left(\widehat{A}_{k,n}^c,\widehat{J}_{1,k}\big\vert X_{t_k}\right)&=\P\left(\left\vert\widehat{Z}_{t_{k+1}}-\widehat{Z}_{t_{k}}\right\vert<\rho_1\Delta_n^{\upsilon},\widehat{J}_{1,k}\right)+\P\left(\left\vert\widehat{Z}_{t_{k+1}}-\widehat{Z}_{t_{k}}\right\vert>\rho_2\Delta_n^{-\gamma},\widehat{J}_{1,k}\right)\\
&\leq \P\left(\left\vert \widehat{\Lambda}_{[t_k,t_{k+1}]}\right\vert<\rho_1\Delta_n^{\upsilon}\big\vert \widehat{J}_{1,k}\right)\P(\widehat{J}_{1,k})+\left(\rho_2^{-1}\Delta_n^{\gamma}\right)^{\kappa}\E\left[\left\vert\widehat{Z}_{t_{k+1}}-\widehat{Z}_{t_{k}}\right\vert^{\kappa}\right]\\
&\leq e^{-\lambda\Delta_n}\lambda\Delta_n\int_{\{\vert z\vert \leq\rho_1\Delta_n^{\upsilon}\}}\mu(dz)+\left(\rho_2^{-1}\Delta_n^{\gamma}\right)^{\kappa}\int_{t_k}^{t_{k+1}}\int_{\R_0^d}\vert z\vert^{\kappa}\nu(dz)ds\\
&\leq e^{-\lambda\Delta_n}\Delta_n\int_{\{\vert z\vert \leq\rho_1\Delta_n^{\upsilon}\}}\nu(dz)+C\left(\lambda\Delta_n\right)^2,
\end{split}
\end{equation*}
where $\kappa$ is chosen in order that $\gamma\kappa+1>2$. Therefore, for any $q>1$,
\begin{align*}
M_{1,3}^{\theta}\leq C\Delta_n\left(\left(\int_{\{\vert z\vert \leq\rho_1\Delta_n^{\upsilon}\}}\nu(dz)\right)^{\frac{1}{q}}+\lambda^{\frac{2}{q}}\Delta_n^{\frac{1}{q}}\right)\left(1+\vert X_{t_k}\vert^{q_1}\right).
\end{align*}

Proceeding as for $M_{0,3}^{\theta}$, we also get that for any $q>1$,
\begin{align*}
M_{1,4}^{\theta}\leq C\Delta_n\left(\left(\int_{\{\vert z\vert \leq\rho_1\Delta_n^{\upsilon}\}}\nu(dz)\right)^{\frac{1}{q}}+\lambda^{\frac{2}{q}}\Delta_n^{\frac{1}{q}}\right)\left(1+\vert X_{t_k}\vert^{q_1}\right).
\end{align*}

We next bound $M_{1,1}^{\theta}$. For this, adding and subtracting the term 
$$
{\bf{1}}_{\widehat{A}_{k,n}}\int_{t_k}^{t_{k+1}}\int_{\R_0^d}c(X_{t_k},z)N(ds,dz)\widetilde{\E}_{X_{t_k}}^{\theta}\left[{\bf 1}_{\widetilde{J}_{1,k}}\vert Y^{\theta}_{t_{k+1}}=X_{t_{k+1}}\right]
$$
inside the square, we get that $M_{1,1}^{\theta}\leq 2(M_{1,1,1}^{\theta}+M_{1,1,2}^{\theta})$, where
\begin{align*} 
M_{1,1,1}^{\theta}&=\E_{\widehat{Q}_k^{\theta,\theta_0}}\bigg[{\bf 1}_{\widehat{J}_{1,k}}\bigg(e_k(\theta)\bigg({\bf{1}}_{\widehat{A}_{k,n}}\int_{t_k}^{t_{k+1}}\int_{\R_0^d}c(X_{t_k},z)N(ds,dz)\\
&\quad\quad\quad\quad-{\bf{1}}_{\widehat{A}_{k,n}}\int_{t_k}^{t_{k+1}}\int_{\R_0^d}c(X_{t_k},z)N(ds,dz)\widetilde{\E}_{X_{t_k}}^{\theta}\left[{\bf 1}_{\widetilde{J}_{1,k}}\bigg\vert Y^{\theta}_{t_{k+1}}=X_{t_{k+1}}\right]\bigg)\bigg)^2\Big\vert X_{t_k}\bigg],\\
M_{1,1,2}^{\theta}&=\E_{\widehat{Q}_k^{\theta,\theta_0}}\bigg[{\bf 1}_{\widehat{J}_{1,k}}\bigg(e_k(\theta)\bigg({\bf{1}}_{\widehat{A}_{k,n}}\int_{t_k}^{t_{k+1}}\int_{\R_0^d}c(X_{t_k},z)N(ds,dz)\widetilde{\E}_{X_{t_k}}^{\theta}\left[{\bf 1}_{\widetilde{J}_{1,k}}\bigg\vert Y^{\theta}_{t_{k+1}}=X_{t_{k+1}}\right]\\
&\quad\quad\quad\quad-\widetilde{\E}_{X_{t_k}}^{\theta}\left[{\bf 1}_{\widetilde{J}_{1,k}}{\bf{1}}_{\widetilde{A}_{k,n}}\int_{t_k}^{t_{k+1}}\int_{\R_0^d}c(Y_{t_k}^{\theta},z)M(ds,dz)\bigg\vert Y^{\theta}_{t_{k+1}}=X_{t_{k+1}}\right]\bigg)\bigg)^2\Big\vert X_{t_k}\bigg].
\end{align*}

Observe that $M_{1,1,1}^{\theta}\leq 2(M_{1,1,1,0}^{\theta}+M_{1,1,1,2}^{\theta})$, where for $i\in\{0,2\}$,
\begin{align*} 
M_{1,1,1,i}^{\theta}&=\E_{\widehat{Q}_k^{\theta,\theta_0}}\left[{\bf 1}_{\widehat{J}_{1,k}}\left(e_k(\theta){\bf{1}}_{\widehat{A}_{k,n}}\int_{t_k}^{t_{k+1}}\int_{\R_0^d}c(X_{t_k},z)N(ds,dz)\widetilde{\E}_{X_{t_k}}^{\theta}\left[{\bf 1}_{\widetilde{J}_{i,k}}\bigg\vert Y^{\theta}_{t_{k+1}}=X_{t_{k+1}}\right]\right)^2\Big\vert X_{t_k}\right].
\end{align*}

By (\ref{c32}),
\begin{align*}
M_{1,1,1,0}^{\theta}&=\int_{I}\int_{\R^d}\left(\dfrac{q_{(0)}^{\theta}(\Delta_n,X_{t_k},y)e^{-\lambda\Delta_n}}{p^{\theta}(\Delta_n,X_{t_k},y)}\right)^2q_{(1)}^{\theta}(\Delta_n,X_{t_k},y;a)e^{-\lambda \Delta_n}\lambda \Delta_n\left(e_k(\theta)c(X_{t_k},a)\right)^2dy\mu(da).
\end{align*}

Again we divide the $dy$ integral into the subdomains $J_1:=\{y\in\R^d:\vert f(y)-f(X_{t_k})\vert>\frac{\Delta_n^{\alpha}}{(1+\vert X_{t_k}\vert^2)^{\frac{3}{2}}}\}$ and $J_2:=\{y\in\R^d:\vert f(y)-f(X_{t_k})\vert\leq\frac{\Delta_n^{\alpha}}{(1+\vert X_{t_k}\vert^2)^{\frac{3}{2}}}\}$, where $\alpha\in(\upsilon+3m\gamma+3\gamma,\frac 12)$, and call the terms $M_{1,1,1,0,1}^{\theta}$ and $M_{1,1,1,0,2}^{\theta}$. In the same way the term $M_{0,1,1}^{\theta}$ was treated, using \eqref{c3eq28}, \eqref{q0} and hypothesis {\bf(A1)}, we obtain that
\begin{align*}
M_{1,1,1,0,1}^{\theta}&\leq C\Delta_n^{-2m\gamma}\left(1+
\vert X_{t_k}\vert^{q_1}\right) \int_{J_1}q_{(0)}^{\theta}(\Delta_n,X_{t_k},y)dy\\
&\leq C\Delta_n^{-2m\gamma}
\left(1+\vert X_{t_k}\vert^{q_1}\right)e^{-\frac{\Delta_n^{2\alpha-1}}{2c(1+\vert X_{t_k}\vert^2)^3}},
\end{align*}
for some constants $C>0$, $c>1$ and $q_1>1$. Next, \eqref{c3density} yields 
\begin{equation}\label{c3eq30}
p^{\theta}(\Delta_n,X_{t_k},y)\geq q_{(0)}^{\theta}(\Delta_n,X_{t_k},y)e^{-\lambda \Delta_n}.
\end{equation}
Then, as for the term $M_{0,1,2}^{\theta}$, using hypotheses {\bf(A1)}, {\bf(A3)}, {\bf(A6)}, and \eqref{q1}, we get that for $n$ large enough,
\begin{equation*} \begin{split}
M_{1,1,1,0,2}^{\theta}&\leq C\Delta_n^{-2m\gamma}
\left(1+\vert X_{t_k}\vert^{q_1}\right)e^{-\lambda \Delta_n}\lambda \Delta_n \int_{I}\int_{J_2}q_{(1)}^{\theta}(\Delta_n,X_{t_k},y;a)dy\mu(da)\\
&\leq C\Delta_n^{-2m\gamma}
\left(1+\vert X_{t_k}\vert^{q_1}\right)e^{-C_0\frac{\Delta_n^{2(\upsilon+3m\gamma+3\gamma)-1}}{(1+\vert X_{t_k}\vert^2)^3}},
\end{split}
\end{equation*}
for some constants $C, C_0>0$ and $q_1>1$. Therefore, the term $M_{1,1,1,0}^{\theta}$ satisfies (\ref{m01}). 

As for the term $M_{0,2}^{\theta}$, we have that $
M_{1,1,1,2}^{\theta}\leq C\lambda^{\frac{2}{q}}\Delta_n^{1+\frac{1}{q}}(1+\vert X_{t_k}\vert^{q_1})$, for all $q>1$ and for some constants $C>0$, $q_1>1$. 

We next treat $M_{1,1,2}^{\theta}$. Using (\ref{c33}), we have that 
\begin{equation*} \begin{split}
M_{1,1,2}^{\theta}&=\int_{I}\int_{\R^d}\left(\dfrac{e_k(\theta)\int_{I}\left(c(X_{t_k},z)-c(X_{t_k},a)\right)q_{(1)}^{\theta}(\Delta_n,X_{t_k},y;a)\mu(da)e^{-\lambda \Delta_n}\lambda \Delta_n}{p^{\theta}(\Delta_n,X_{t_k},y)}\right)^2\\
&\qquad \qquad \qquad \times q_{(1)}^{\theta}(\Delta_n,X_{t_k},y;z)e^{-\lambda \Delta_n}\lambda\Delta_ndy\mu(dz).
\end{split}
\end{equation*}

We next fix $\alpha_0\in(\frac{1}{4},\frac{1}{2}-3\gamma)$ and let $\varepsilon\in(0,\alpha_0-3m\gamma)$. Consider the set
\begin{align*}
E^k_z=\left\{a\in I: \left\vert c(X_{t_k},z)-c(X_{t_k},a)\right\vert\leq\Delta_n^{\varepsilon},\ \text{for all}\ z\in I\right\}.
\end{align*}
We next split the integral inside the square of $M_{1,1,2}^{\theta}$ over the sets ${\bf{1}}_{E^k_z}$ and ${\bf{1}}_{(E^k_z)^c}$ and call both terms $M_{1,1,2,1}^{\theta}$ and $M_{1,1,2,2}^{\theta}$. First, \eqref{c3eq25}, \eqref{q1} and hypothesis {\bf(A6)} yield that
\begin{align}
M_{1,1,2,1}^{\theta}&\leq Ce^{-\lambda \Delta_n}\lambda\Delta_n^{1+2\varepsilon}\left(1+\vert X_{t_k}\vert^{q_1}\right)\int_{I}\int_{\R^d}q_{(1)}^{\theta}(\Delta_n,X_{t_k},y;z)dy\mu(dz) \notag\\
&\leq C\Delta_n^{1+2\varepsilon}\left(1+\vert X_{t_k}\vert^{q_1}\right)\int_{I}\int_{\R^d}\dfrac{\beta^d(z)}{\eta(z)\Delta_n^{d/2}}e^{-\frac{\left\vert f(y)-f(X_{t_k})-\widetilde{c}(f(X_{t_k}),z)\right\vert^2}{C_2\beta^2(z)\Delta_n}}\det\nabla f(y)dy\nu(dz) \notag\\
&\leq C\Delta_n^{1+2\varepsilon}\left(1+\vert X_{t_k}\vert^{q_1}\right).\label{c3m1121}
\end{align}

Next, to treat $M_{1,1,2,2}^{\theta}$, we divide the domain of the $dy$ integral into two subdomains $I_1:=\{y\in\R^d:\vert f(y)-f(X_{t_k})-\widetilde{c}(f(X_{t_k}),z)\vert>\frac{\Delta_n^{\alpha_0}}{(1+\vert X_{t_k}\vert^2)^{3/2}}\}$ and $I_2:=\{y\in\R^d:\vert f(y)-f(X_{t_k})-\widetilde{c}(f(X_{t_k}),z)\vert\leq\frac{\Delta_n^{\alpha_0}}{(1+\vert X_{t_k}\vert^2)^{3/2}}\}$, and call both terms $M_{1,1,2,2,1}^{\theta}$ and $M_{1,1,2,2,2}^{\theta}$. Then, using hypotheses {\bf(A1)}, {\bf(A6)}, together with \eqref{c3eq25} and \eqref{q1}, we get
that
\begin{align*} 
&M_{1,1,2,2,1}^{\theta}\leq C\Delta_n^{-2m\gamma}
\left(1+\vert X_{t_k}\vert^{q_1}\right)e^{-\lambda \Delta_n}\lambda\Delta_n\int_{I}\int_{I_1} q_{(1)}^{\theta}(\Delta_n,X_{t_k},y;z)dy\mu(dz)\\
&\leq C\Delta_n^{-2m\gamma}\left(1+\vert X_{t_k}\vert^{q_1}\right)\int_{I}\int_{I_1} \dfrac{\beta^d(z)}{\eta(z)\Delta_n^{d/2}}e^{-\frac{\left\vert f(y)-f(X_{t_k})-\widetilde{c}(f(X_{t_k}),z)\right\vert^2}{C_2\beta^2(z)\Delta_n}}\det\nabla f(y)dy\nu(dz)\\
&\leq C\Delta_n^{-2m\gamma}\left(1+\vert X_{t_k}\vert^{q_1}\right)e^{-\frac{\Delta_n^{2\alpha_0+6\gamma-1}}{2C_3(1+\vert X_{t_k}\vert^2)^3}}\int_{I}\int_{I_1} \dfrac{\beta^d(z)}{\eta(z)\Delta_n^{d/2}}e^{-\frac{\left\vert f(y)-f(X_{t_k})-\widetilde{c}(f(X_{t_k}),z)\right\vert^2}{2C_2\beta^2(z)\Delta_n}}\det\nabla f(y)dy\nu(dz)\\
&\leq C\Delta_n^{-2m\gamma}\left(1+\vert X_{t_k}\vert^{q_1}\right)e^{-\frac{\Delta_n^{2\alpha_0+6\gamma-1}}{2C_3(1+\vert X_{t_k}\vert^2)^3}},
\end{align*}
for some constants $C, C_2, C_3>0$. Next, \eqref{c3density} yields 
\begin{equation*} \begin{split}
&\left(p^{\theta}(\Delta_n,X_{t_k},y)\right)^2\geq p^{\theta}(\Delta_n,X_{t_k},y) \int_{I}q_{(1)}^{\theta}(\Delta_n,X_{t_k},y;a)\mu(da)e^{-\lambda \Delta_n}\lambda \Delta_n.
\end{split}
\end{equation*}
Then, using hypothesis {\bf(A1)} and \eqref{q1}, we obtain that
\begin{align*}
&M_{1,1,2,2,2}^{\theta}\leq C\Delta_n^{-2m\gamma}
\left(1+\vert X_{t_k}\vert^{q_1}\right)e^{-\lambda \Delta_n}\lambda\Delta_n\\
&\qquad\times\int_{I}\int_{I_2}\int_{I}{\bf{1}}_{(E^k_z)^c}q_{(1)}^{\theta}(\Delta_n,X_{t_k},y;a)\mu(da)
 \dfrac{q_{(1)}^{\theta}(\Delta_n,X_{t_k},y;z)e^{-\lambda \Delta_n}\lambda \Delta_n}{p^{\theta}(\Delta_n,X_{t_k},y)}dy\mu(dz)\\
&\leq C\Delta_n^{-2m\gamma}
\left(1+\vert X_{t_k}\vert^{q_1}\right)\int_{I}\int_{I_2}\int_{I}{\bf{1}}_{(E^k_z)^c}\dfrac{\beta^d(a)}{\eta(a)\Delta_n^{d/2}}e^{-\frac{\left\vert f(y)-f(X_{t_k})-\widetilde{c}(f(X_{t_k}),a)\right\vert^2}{C_2\beta^2(a)\Delta_n}}\det\nabla f(y)\nu(da)\\
&\qquad\times\dfrac{q_{(1)}^{\theta}(\Delta_n,X_{t_k},y;z)e^{-\lambda \Delta_n}\lambda \Delta_n}{p^{\theta}(\Delta_n,X_{t_k},y)}dy\mu(dz)\\
&\leq C\Delta_n^{-2m\gamma-\frac{d}{2}}\left(1+\vert X_{t_k}\vert^{q_1}\right)\int_{I}\int_{\{\vert h\vert\leq\frac{\Delta_n^{\alpha_0}}{(1+\vert X_{t_k}\vert^2)^{3/2}}\}}\int_{I}{\bf{1}}_{(E^k_z)^c}\dfrac{\beta^d(a)}{\eta(a)}e^{-\frac{\left\vert h+\widetilde{c}(f(X_{t_k}),z)-\widetilde{c}(f(X_{t_k}),a)\right\vert^2}{C_3\Delta_n^{1-6\gamma}}}\nu(da)\\
&\qquad\times \dfrac{q_{(1)}^{\theta}(\Delta_n,X_{t_k},y;z)e^{-\lambda \Delta_n}\lambda \Delta_n}{p^{\theta}(\Delta_n,X_{t_k},y)}\det\nabla f(y)dy\mu(dz),
\end{align*}
for some constants $C, C_2, C_3>0$, where we set $h:=f(y)-f(X_{t_k})-\widetilde{c}(f(X_{t_k}),z)$. 

Next, using the same arguments as in \eqref{minoration}, we get that
\begin{align*}
\left\vert\widetilde{c}(f(X_{t_k}),z)-\widetilde{c}(f(X_{t_k}),a)\right\vert^2&=\left\vert f\left(X_{t_k}+c(X_{t_k},z)\right)-f\left(X_{t_k}+c(X_{t_k},a)\right)\right\vert^2\\
&\geq \dfrac{C\Delta_n^{6m\gamma}}{\left(1+\vert X_{t_k}\vert^2\right)^{3}}\left\vert c(X_{t_k},z)-c(X_{t_k},a)\right\vert^2\\
&\geq \dfrac{C\Delta_n^{2(\varepsilon+3m\gamma)}}{\left(1+\vert X_{t_k}\vert^2\right)^{3}},
\end{align*}
for some constant $C>0$, since $\vert c(X_{t_k},z)-c(X_{t_k},a)\vert>\Delta_n^{\varepsilon}$ on $(E^k_z)^c$. Here, we have used the following estimate, by {\bf(A1)}, 
\begin{align*}
1+\vert X_{t_k}+\eta (c(X_{t_k},z)-c(X_{t_k},a))\vert^2&\leq 1+2\left(\vert X_{t_k}\vert^2+\left\vert c(X_{t_k},z)-c(X_{t_k},a)\right\vert^2\right)\\
&\leq C\left(1+\vert X_{t_k}\vert^2+\left(1+\vert X_{t_k}\vert^2\right)(\vert z\vert^{2m}+\vert a\vert^{2m})\right)\\
&\leq C\Delta_n^{-2m\gamma}\left(1+\vert X_{t_k}\vert^2\right).
\end{align*}

Thus, using $\vert h\vert\leq \frac{\Delta_n^{\alpha_0}}{(1+\vert X_{t_k}\vert^2)^{\frac{3}{2}}}$ and $\varepsilon+3m\gamma<\alpha_0$, we deduce that for $n$ large enough,
\begin{align*}
\left\vert h+\widetilde{c}(f(X_{t_k}),z)-\widetilde{c}(f(X_{t_k}),a)\right\vert^2&\geq\dfrac{\left\vert\widetilde{c}(f(X_{t_k}),z)-\widetilde{c}(f(X_{t_k}),a)\right\vert^2}{2}-\vert h\vert^2\\
&\geq\dfrac{C\Delta_n^{2(\varepsilon+3m\gamma)}}{2(1+\vert X_{t_k}\vert^2)^{3}}-\dfrac{\Delta_n^{2\alpha_0}}{(1+\vert X_{t_k}\vert^2)^{3}}\\
&\geq \dfrac{C_4\Delta_n^{2(\varepsilon+3m\gamma)}}{(1+\vert X_{t_k}\vert^2)^{3}},
\end{align*}
for some constant $C_4>0$. Therefore, using \eqref{c3eq25} and $\int_{I}\frac{\beta^d(a)}{\eta(a)}\nu(da)<\infty$, for $n$ large enough,
\begin{align*}
M_{1,1,2,2,2}^{\theta}&\leq C\Delta_n^{-2m\gamma-\frac{d}{2}}e^{-\frac{C_4\Delta_n^{2\left(\varepsilon+3m\gamma+3\gamma\right)-1}}{C_3(1+\vert X_{t_k}\vert^2)^3}}\left(1+\vert X_{t_k}\vert^{q_1}\right)\\
&\qquad\times \int_{I}\int_{\{\vert h\vert\leq \Delta_n^{\alpha_0}\}}\dfrac{q_{(1)}^{\theta}(\Delta_n,X_{t_k},y;z)e^{-\lambda \Delta_n}\lambda \Delta_n}{p^{\theta}(\Delta_n,X_{t_k},y,z))}\det\nabla f(y)dy\mu(dz)\\
&\leq C\Delta_n^{-2m\gamma-\frac{d}{2}}e^{-\frac{C_4\Delta_n^{2\left(\varepsilon+3m\gamma+3\gamma\right)-1}}{C_3(1+\vert X_{t_k}\vert^2)^3}}\left(1+\vert X_{t_k}\vert^{q_1}\right)\\
&\qquad\times \int_{\{\vert u\vert \leq\Delta_n^{\alpha_0}+1\}}\dfrac{\int_{I}q_{(1)}^{\theta}(\Delta_n,X_{t_k},f^{-1}(u);z)\mu(dz)e^{-\lambda \Delta_n}\lambda \Delta_n}{p^{\theta}(\Delta_n,X_{t_k},f^{-1}(u))}du\\
&\leq C\Delta_n^{-2m\gamma-\frac{d}{2}}e^{-\frac{C_4\Delta_n^{2\left(\varepsilon+3m\gamma+3\gamma\right)-1}}{C_3(1+\vert X_{t_k}\vert^2)^3}}\left(1+\vert X_{t_k}\vert^{q_1}\right), 
\end{align*}
where we have used the change of variables $u:=f(y)$, and $f^{-1}$ is the inverse function of $f$.

Since $\alpha_0>\varepsilon+3m\gamma$, we deduce that for any $\alpha_0\in(\frac{1}{4},\frac{1}{2}-3\gamma)$ and $n$ large enough,
\begin{equation*} 
M_{1,1,2,2}^{\theta}\leq C \Delta_n^{-2m\gamma-\frac{d}{2}}
\left(1+\vert X_{t_k}\vert^{q_1}\right)e^{-C_1\frac{\Delta_n^{2\alpha_0+6\gamma-1}}{(1+\vert X_{t_k}\vert^2)^3}},
\end{equation*}
for some constants $C, C_1>0$, which together with (\ref{c3m1121}) gives
\begin{equation*} 
M_{1,1,2}^{\theta}\leq C\left(1+\vert X_{t_k}\vert^{q_1}\right) \left(\Delta_n^{1+2\varepsilon}+ \Delta_n^{-2m\gamma-\frac{d}{2}}e^{-C_1\frac{\Delta_n^{2\alpha_0+6\gamma-1}}{(1+\vert X_{t_k}\vert^2)^3}}\right),
\end{equation*}
for any $\alpha_0\in(\frac{1}{4},\frac{1}{2}-3\gamma)$, $\varepsilon\in(0,\alpha_0-3m\gamma)$ and $n$ large enough.

Finally, as for $M_{0,2}^{\theta}$, we obtain that $M_{1,2}^{\theta}+M_{2}^{\theta}\leq C\lambda^{\frac{2}{q}}\Delta_n^{1+\frac{1}{q}}(1+\vert X_{t_k}\vert^{q_1})$, for all $q>1$ and for some constants $C>0$, $q_1>1$, which concludes the proof of (\ref{c3m2}) and (\ref{c3m3}).
\end{proof}

\end{document}